\documentclass[11pt, reqno, twoside, makeidx]{amsart}
\usepackage[margin=2.7cm, marginpar=1cm]{geometry}

\usepackage{wrapfig}
\usepackage{marginnote}
\usepackage{hyperref}
\usepackage[english]{babel}
\usepackage[utf8]{inputenc}
\usepackage[T1]{fontenc}
\usepackage{amsmath,amsthm,amssymb,amsfonts}
\usepackage{relsize}
\numberwithin{equation}{section}
\usepackage{enumerate}
\usepackage{faktor}
\usepackage{dsfont}
\usepackage{scalerel,stackengine}
\usepackage{textcomp}
\usepackage{graphicx}
\usepackage{extarrows}
\usepackage{epigraph}
\usepackage{graphicx}
\usepackage{subcaption}
\usepackage{sidecap}
\usepackage{indentfirst}
\usepackage{tikz}
\usepackage{tikz-cd}
\usepackage{tkz-euclide}
\usetikzlibrary{shapes.misc,arrows,matrix,patterns,decorations.markings,positioning}
\tikzset{cross/.style={cross out, draw=black, minimum size=2*(#1-\pgflinewidth), inner sep=0pt, outer sep=0pt},
cross/.default={4.5pt}}
\usepackage{pgfplots}
\usepackage{caption}
\usepackage{float}
\usepackage{color}
\usepackage{epstopdf}
\usepackage{epsfig}
\usepackage{stmaryrd}
\usepackage{color}
\usepackage{geometry}
\usepackage{multicol}
\usepackage{verbatim}
\usepackage{wrapfig}
\usepackage{float}

\DeclareMathOperator{\Ker}{Ker }
\DeclareMathOperator{\Imm}{Im }
\DeclareMathOperator{\tb}{tb}
\DeclareMathOperator{\rot}{rot}

\DeclareMathOperator{\lk}{\ell k}

\DeclareMathOperator{\Spin}{Spin}

\DeclareMathOperator{\Id}{Id}

\DeclareMathOperator{\xist}{\xi_{std}}
\renewcommand{\geq}{\geqslant}
\renewcommand{\leq}{\leqslant} 
\renewcommand{\epsilon}{\varepsilon}

\newcommand{\Z}{\mathbb{Z}}
\newcommand{\Q}{\mathbb{Q}}
\newcommand{\F}{\mathbb{F}}

\newcommand{\C}{\mathbb C}
\newcommand{\J}{\mathcal J}

\newcommand{\s}{\mathfrak{s}}

\newtheorem{teo}{Theorem}[section]
\newtheorem*{teo*}{Theorem}
\newtheorem{lemma}[teo]{Lemma}
\newtheorem{prop}[teo]{Proposition}
\newtheorem*{prop*}{Proposition}

\newtheorem{cor}[teo]{Corollary}

\usepackage{xpatch}
\makeatletter
\xpatchcmd{\@thm}{\thm@headpunct{.}}{\thm@headpunct{}}{}{}
\makeatother

\pgfplotsset{compat=1.18}
\begin{document}
\title[Mazur manifolds and symplectic structures]{Mazur manifolds and symplectic structures}
\author{Alberto Cavallo}
\address{HUN-REN Alfr\'ed R\'enyi Institute of Mathematics, Budapest 1053, Hungary}
\email{acavallo@impan.pl}
\subjclass[2020]{57K18, 57K33, 57K43, 32Q35}


\begin{abstract}
 We use the Heegaard Floer homology cobordism maps to obstruct the existence of a symplectic structure on the Akbulut-Kirby Mazur manifolds whose boundary is a Brieskorn sphere $Y$ among $\Sigma(2,3,13),$ $\Sigma(2,5,7)$ and $\Sigma(3,4,5)$. Furthermore, we describe how they imply the existence of exotic pairs of simply connected 4-manifolds, with definite intersection form, whose boundary is a Brieskorn sphere bounding a Mazur manifold.
\end{abstract}

\maketitle

\thispagestyle{empty}

\section{Introduction}
In \cite{ACM} we provide evidence for a positive answer to the generalized version of a conjecture of Gompf; namely, that no Brieskorn sphere $Y=\Sigma(a_1,...,a_n)$ arises as the boundary of a rational homology ball symplectic filling. What we show in \cite[Theorems 1.4 and 1.5]{ACM} is that the contact structure $\xi_\text{can}$, usually called the Milnor fillable contact structure on $Y$, admits no such filling except for some specific cases: the Brieskorn spheres $\Sigma(3,4,5),$ $\Sigma(2,5,7)$ and $\Sigma(2,3,6k+1)$ when $k\geq1$. For the latter manifolds it is still unknown whether rational homology ball symplectic fillings may exist or not. Note that it follows from our results, see \cite[Proposition 1.7]{ACM}, that these are the only canonically oriented Brieskorn spheres carrying exactly two tight structures, up to isotopy, and with vanishing correction term.

The starting obstruction that we may look at is whether $Y$ bounds a rational homology ball in the first place. It is known from the literature that $\Sigma(3,4,5),$ $\Sigma(2,5,7),$ $\Sigma(2,3,13)$ and $\Sigma(2,3,25)$ bound contractible Mazur manifolds, see \cite{AK,Fickle}, while $\Sigma(2,3,7)$ and $\Sigma(2,3,19)$ both bound rational homology balls, but not an integral one because they have non-vanishing $\overline\mu$-invariant, see \cite{FS,AL}. 

\begin{figure}[h] 
 \begin{tikzpicture}[scale=0.75]
    \tkzDefPoints{0/0/A, 1.5/1/B, 1.5/-1/D, 3/-1/E, 1.5/0/C}  
    \tkzDrawSegment(A,B)\tkzDrawSegment(A,D)\tkzDrawSegment(E,D)\tkzDrawSegment(A,C)
    \tkzDrawPoints[fill,black,size=5](A,B,C,D,E)
     \tkzLabelPoint[above left](A){$-1$} \tkzLabelPoint[above left](B){$-2$}\tkzLabelPoint[right](C){$-3$}
     \tkzLabelPoint[below left](D){$-7$}\tkzLabelPoint[below left](E){$-2$}
\end{tikzpicture}\hspace{1cm} 
       \begin{tikzpicture}[scale=0.75]
    \tkzDefPoints{0/0/A, 1.5/1/B, 1.5/-1/D, 3/-1/E, 1.5/0/C}  
    \tkzDrawSegment(A,B)\tkzDrawSegment(A,D)\tkzDrawSegment(E,D)\tkzDrawSegment(B,A)\tkzDrawSegment(A,C)
    \tkzDrawPoints[fill,black,size=5](A,B,C,D,E)
     \tkzLabelPoint[above left](A){$-1$} 
     \tkzLabelPoint[above left](B){$-2$}\tkzLabelPoint[right](C){$-5$}\tkzLabelPoint[below left](D){$-4$}\tkzLabelPoint[below left](E){$-2$}
       \end{tikzpicture}\hspace{1cm}
      \begin{tikzpicture}[scale=0.75]
    \tkzDefPoints{0/0/A, 1.5/1/B, 1.5/-1/D, 3/-1/E, 1.5/0/C}  
    \tkzDrawSegment(A,B)\tkzDrawSegment(A,D)\tkzDrawSegment(E,D)\tkzDrawSegment(B,A)\tkzDrawSegment(A,C)
    \tkzDrawPoints[fill,black,size=5](A,B,C,D,E)
     \tkzLabelPoint[above left](A){$-1$} 
     \tkzLabelPoint[above left](B){$-3$}\tkzLabelPoint[right](C){$-4$}
     \tkzLabelPoint[below left](D){$-3$}\tkzLabelPoint[below left](E){$-2$}
       \end{tikzpicture}
     \caption{\smaller[1]{The standard graph of $\Sigma(2,3,13)$ (left), $\Sigma(2,5,7)$ (middle) and $\Sigma(3,4,5)$ (right).}}
     \label{Graph}
\end{figure} 

It is a result of Mark and Tosun \cite[Theorem 1.8]{MT2} that the Akbulut-Kirby Mazur manifold $\pm W^-(2)$, whose boundary is $\pm\Sigma(2,3,13)$, does not carry a Stein structure. In this paper we focus on $\Sigma(2,3,13),$ $\Sigma(2,5,7)$ and $\Sigma(3,4,5)$, whose plumbing graph is drawn in Figure \ref{Graph}, 
and prove the following stronger result using Heegaard Floer homology.
\begin{teo}
 \label{teo:main}
 The connected sum of the Mazur manifold $W^-(m)$ in Figure \ref{Mazur} with any closed $4$-manifold is not a symplectic filling for $m=2,3,4$.   
\end{teo}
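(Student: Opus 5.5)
Suppose, for contradiction, that $X=W^-(m)\#Z$ is a (weak or strong) symplectic filling of some contact structure $\xi$ on $Y=\partial W^-(m)$, where $Z$ is closed and $m\in\{2,3,4\}$. The plan is to find a contradiction among three facts:
\begin{itemize}
\item which contact structures $\xi$ can occur;
\item the constraints imposed by Ozsv\'ath--Szab\'o's nonvanishing of the contact invariant under cobordism maps for symplectic fillings;
\item the rank and grading information of $HF^+(Y)$, which is very small because $Y$ is one of $\Sigma(2,3,13),\Sigma(2,5,7),\Sigma(3,4,5)$.
\end{itemize}

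\textbf{Step 1: $Z$ can be assumed to be a blow-up and $\xi$ identified.} By \cite{ACM}, $Y$ carries exactly two tight contact structures up to isotopy. One is $\xi_{\mathrm{can}}$; the other, I expect, is its conjugate, or one of two structures both obtained by Legendrian surgery on the plumbing in Figure \ref{Graph}. Since $\xi$ has a filling, it is tight, so $\xi$ is one of these two. Next, cap off $X$ with a concave cap $C$ (Eliashberg/Etnyre) to get a closed symplectic manifold $X\cup C$. Since $Y$ is an integral homology sphere, the factor $Z$ splits off in homology. Use the Ozsv\'ath--Szab\'o result that a symplectic filling cannot decompose, along a separating homology sphere, into pieces with $b_2^+>0$ on both sides in a way that kills the mixed invariant. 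Concretely, if $b_2^+(Z)>0$, then $W^-(m)\#Z$ has a summand $Z\setminus B^4$ with $b_2^+\geq1$ and closed boundary $S^3$. The cobordism map $F^+_{X\setminus B^4}$ then vanishes, because it factors through the map of $Z\setminus 2B^4$ with $b_2^+>0$. This contradicts the fact that the mixed map for a filling sends the generator to $c^+(\xi)\neq0$ (Plamenevskaya / Ozsv\'ath--Szab\'o). Hence $b_2^+(Z)=0$, so the filling is negative definite.

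\textbf{Step 2: grading/$\mathrm{Spin}^c$ computation.} For the negative definite filling $X\setminus B^4\colon S^3\to Y$, the map $F^+_{X,\mathfrak t}$ for the canonical $\mathrm{Spin}^c$ structure $\mathfrak t$ sends $\Theta^+$ to the contact class $c^+(\xi)$ in $HF^+(-Y)$ (working with the appropriate orientation). The grading shift is $(c_1(\mathfrak t)^2-2\chi-3\sigma)/4$. Since $W^-(m)$ is contractible, $c_1(\mathfrak t)$ lives entirely on $Z$, and by Donaldson's theorem $Z$ has diagonal form $-I$, so $c_1^2\le -b_2(Z)$. This gives $\mathrm{gr}(c^+(\xi))\le d(Y)=0$, with equality exactly when the filling is essentially $W^-(m)$ blown up. On the other hand, the degree of $c^+(\xi)$ is determined by the Hopf invariant $d_3(\xi)$, which I would compute from the Legendrian plumbing via $(c^2-3\sigma-2\chi)/4$. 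I expect this to show that $c^+(\xi)$ sits in the bottom of the tower in grading $0$ for $\xi_{\mathrm{can}}$, so grading alone will not rule it out. The contradiction must come from the finer structure.

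\textbf{Step 3: the main obstacle.} I expect the crux to be showing that the image of the contractible piece's map is wrong. For the Mazur manifold, $F^+_{W^-(m)}$ maps $HF^+(S^3)$ onto the bottom of the tower in $HF^+(Y)$. Because the correction term is $0$, this element is $U$-divisible, i.e.\ it lies in the image of $U^k$ for all $k$. The contact class, by contrast, must not be in the image of $U$ for structures with $\mathrm{rank}\,HF_{red}$ as here. Alternatively, I would use the explicit $HF^+$, computed via N\'emethi's lattice cohomology from the graphs in Figure \ref{Graph}: $HF^+(-Y)=\mathcal T^+_0\oplus\mathbb F_{(0)}$ or similar. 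The filling map composed with the blow-up maps lands in the tower, whereas $c^+(\xi_{\mathrm{can}})$ is the generator of the lattice-cohomology bottom, which has nonzero reduced component, and $c^+(\bar\xi)$ is its conjugate. Verifying this from the lattice description is the step I would check most carefully, for each $m=2,3,4$, using the identification of $\partial W^-(m)$ with the three Brieskorn spheres and Karakurt's description of the contact invariant in lattice cohomology.
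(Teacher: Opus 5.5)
\textbf{Step 3 has a genuine gap: it is where the entire content of the theorem lies, and the mechanism you propose cannot work.} Your reduction in Step 1 is fine. If $b_2^+(Z)>0$, then $\widehat F_{Z\setminus 2B^4,\s}=0$. For $m=3,4$ the paper avoids even this case split: it notes $\widehat F_{Z,\s}\in\{0,\Id\}$ and $\widehat F_{Z\#W^-(m),\s}=\widehat F_{W^-(m)}\circ\widehat F_{Z,\s}$.

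What a filling forces is $\langle\widehat F_{W^-(m)}(1),\widehat c(\xi)\rangle=1$, equivalently $F^+_{\overline{W^-(m)}}(c^+(\xi))\neq0$. This is a pairing condition. It is not a statement about $U$-divisibility or about landing ``in the tower''. In $\widehat{HF}_0(Y)\cong\F^3$, with basis $[V_1],[V_0],[V_{-1}]$, there is no canonical tower summand; note also that the reduced part in degree $0$ has rank two, not one.

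The formal constraints available to you leave two candidates. The image is non-torsion of degree $0$, so $\theta=\widehat F_{W^-(m)}(1)$ pairs to $1$ with $\Theta^+=T_{[V_1]}+T_{[V_0]}+T_{[V_{-1}]}$. Also $\J\theta=\theta$, since $W^-(m)$ has a unique $\Spin^c$-structure. This leaves exactly $\theta=[V_0]$ or $\theta=[V_1]+[V_0]+[V_{-1}]$. The second pairs to $1$ with both $\widehat c(\xi)=T_{[V_1]}$ and $\widehat c(\overline\xi)=T_{[V_{-1}]}$, so it is fully compatible with $W^-(m)$ being a filling.

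Dually, $F^+_{\overline{W^-(m)}}$ must send $\Theta^+_{-Y}$ to the generator, but its values on $HF_{\mathrm{red}}(-Y)$ are unconstrained. So the fact that $c^+(\xi)$ has a nonzero reduced component gives nothing. The $U$-divisibility of $F^+_{W^-(m)}(\Theta^+_{S^3})\in HF^+(Y)$ is also irrelevant: that class and $c^+(\xi)\in HF^+(-Y)$ live in different groups. No lattice-cohomology computation or identification of $c^+(\xi)$ can decide between the two candidates. You need input from the smooth topology of $W^-(m)$.

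\textbf{The paper supplies that input.} For $m=3,4$, $\gamma_-$ is smoothly slice in $W^-(2,m-2)$ by Akbulut--Kirby. The paper then uses a self-diffeomorphism of $Y$ carrying a Legendrian knot $K_m$, which bounds a Lagrangian disk, to $\gamma_-$. The Legendrian-surgery formula for $\tau_\xi$ then gives $\{\tau_\xi(\gamma_-),\tau_{\overline\xi}(\gamma_-)\}=\{0,1\}$, respectively $\{0,2\}$. If $\theta$ were the full sum, it would pair to $1$ with both contact classes. The Hedden--Raoux relative adjunction inequality would then give $0<\max\mathcal T(m)\leq\tau_\theta(W^-(2,m-2),\gamma_-)\leq g_4=0$, a contradiction. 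Hence $\theta=[V_0]$, which pairs to zero with both fillable structures.

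For $m=2$ even this only yields the dichotomy, and the paper switches to Seiberg--Witten theory. By Mark--Tosun, $P_2\cup W^-(2)\cong E(1)$ with $P_2$ a symplectic cap, so a symplectic $Z\#W^-(2)$ would produce a closed symplectic $Z\#E(1)$. The case $b_2^+(Z)>0$ is excluded by the connected-sum vanishing theorem. The case $b_2^+(Z)=0$ is excluded by a chamber and neck-stretching argument using a positive scalar curvature metric on $E(1)$. Both contradict Li--Liu's nonvanishing result. Your proposal contains neither ingredient, so as written it does not establish the theorem for any of $m=2,3,4$.
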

Note that our argument does not imply that $\Sigma(2,3,13),$ $\Sigma(2,5,7)$ and $\Sigma(3,4,5)$ cannot bound rational homology ball symplectic fillings which do not admit a summand diffeomorphic to $W^-(m)$.

A \emph{Mazur manifold}\footnote{Usually only the ones with exactly one 1-handle and one 2-handle are considered Mazur manifolds; the 4-manifolds appearing here are of this kind. We consider the given definition as the natural extension to manifolds with a richer handle decomposition.} is a smooth 4-manifold $X$ which can be decomposed as the union of only one 0-handle and an equal number of 1- and 2-handles, so that its double $D(X):=X\cup_{\partial X}-X$ is the standard 4-sphere. In particular, the manifold $X$ is contractible and has non-empty boundary. Some of the first examples of Mazur manifolds were given by Akbulut and Kirby in \cite{AK}, see Figure \ref{Mazur}; this family includes the manifolds $W^-(m)$ with $m=2,3,4$ which we are going to study in this paper.

\begin{figure}[h]
 \centering
 \vspace{0cm}
  \def\svgwidth{0.48\textwidth}
\begingroup%
  \makeatletter%
  \providecommand\color[2][]{%
    \errmessage{(Inkscape) Color is used for the text in Inkscape, but the package 'color.sty' is not loaded}%
    \renewcommand\color[2][]{}%
  }%
  \providecommand\transparent[1]{%
    \errmessage{(Inkscape) Transparency is used (non-zero) for the text in Inkscape, but the package 'transparent.sty' is not loaded}%
    \renewcommand\transparent[1]{}%
  }%
  \providecommand\rotatebox[2]{#2}%
  \newcommand*\fsize{\dimexpr\f@size pt\relax}%
  \newcommand*\lineheight[1]{\fontsize{\fsize}{#1\fsize}\selectfont}%
  \ifx\svgwidth\undefined%
    \setlength{\unitlength}{1135.15647107bp}%
    \ifx\svgscale\undefined%
      \relax%
    \else%
      \setlength{\unitlength}{\unitlength * \real{\svgscale}}%
    \fi%
  \else%
    \setlength{\unitlength}{\svgwidth}%
  \fi%
  \global\let\svgwidth\undefined%
  \global\let\svgscale\undefined%
  \makeatother%
  \begin{picture}(1,0.65215127)%
    \lineheight{1}%
    \setlength\tabcolsep{0pt}%
    \put(0,0){\includegraphics[width=\unitlength,page=1]{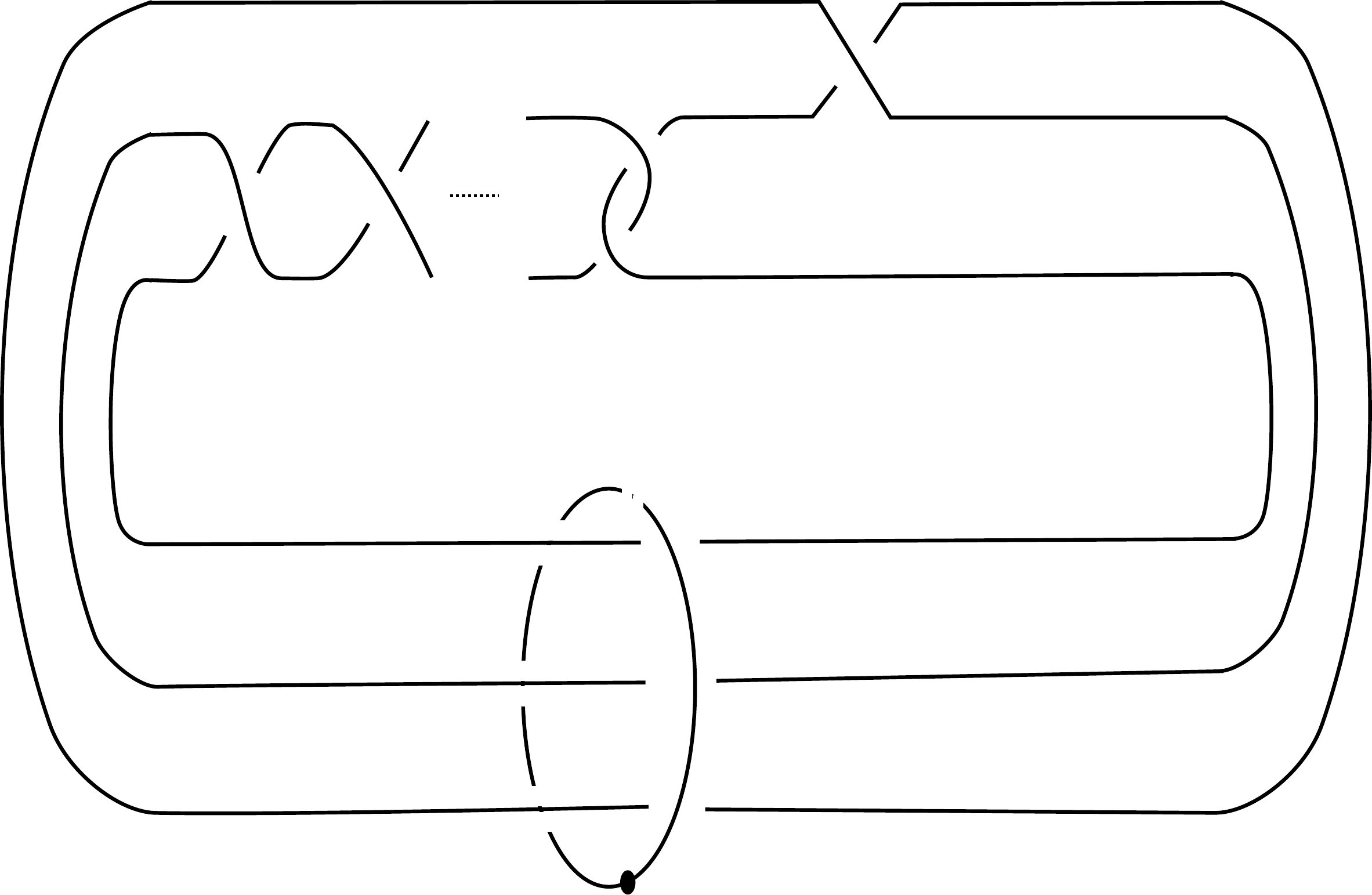}}%
    \put(0.79801256,0.28008267){\color[rgb]{0,0,0}\makebox(0,0)[lt]{\lineheight{1.25}\smash{\begin{tabular}[t]{l}$k$\end{tabular}}}}%
    \put(0.46878121,0.3426178){\color[rgb]{0,0,0}\makebox(0,0)[lt]{\lineheight{1.25}\smash{\begin{tabular}[t]{l}$\gamma_-$\end{tabular}}}}%
    \put(0.11080241,0.3890421){\color[rgb]{0,0,0}\makebox(0,0)[lt]{\lineheight{1.25}\smash{\begin{tabular}[t]{l}$\ell\text{ full twists}$\end{tabular}}}}%
    \put(0,0){\includegraphics[width=\unitlength,page=2]{Mazur-.pdf}}%
  \end{picture}%
\endgroup%

  \vspace{0cm}   
  \vspace{0cm}
  \def\svgwidth{0.48\textwidth}
\begingroup%
  \makeatletter%
  \providecommand\color[2][]{%
    \errmessage{(Inkscape) Color is used for the text in Inkscape, but the package 'color.sty' is not loaded}%
    \renewcommand\color[2][]{}%
  }%
  \providecommand\transparent[1]{%
    \errmessage{(Inkscape) Transparency is used (non-zero) for the text in Inkscape, but the package 'transparent.sty' is not loaded}%
    \renewcommand\transparent[1]{}%
  }%
  \providecommand\rotatebox[2]{#2}%
  \newcommand*\fsize{\dimexpr\f@size pt\relax}%
  \newcommand*\lineheight[1]{\fontsize{\fsize}{#1\fsize}\selectfont}%
  \ifx\svgwidth\undefined%
    \setlength{\unitlength}{1100.6586222bp}%
    \ifx\svgscale\undefined%
      \relax%
    \else%
      \setlength{\unitlength}{\unitlength * \real{\svgscale}}%
    \fi%
  \else%
    \setlength{\unitlength}{\svgwidth}%
  \fi%
  \global\let\svgwidth\undefined%
  \global\let\svgscale\undefined%
  \makeatother%
  \begin{picture}(1,0.67259214)%
    \lineheight{1}%
    \setlength\tabcolsep{0pt}%
    \put(0,0){\includegraphics[width=\unitlength,page=1]{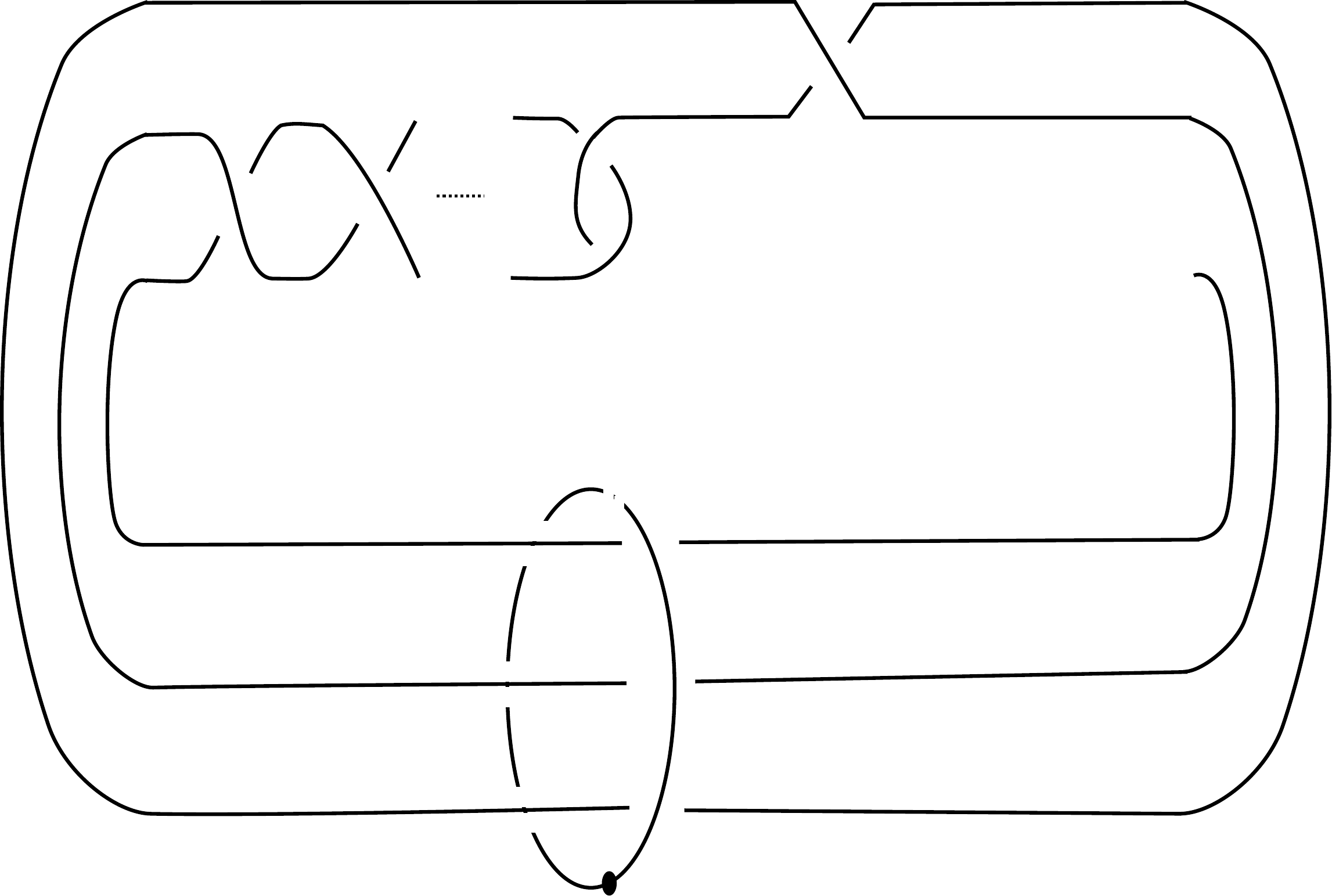}}%
    \put(0.79800025,0.28887197){\color[rgb]{0,0,0}\makebox(0,0)[lt]{\lineheight{1.25}\smash{\begin{tabular}[t]{l}$k$\end{tabular}}}}%
    \put(0.46878252,0.35336951){\color[rgb]{0,0,0}\makebox(0,0)[lt]{\lineheight{1.25}\smash{\begin{tabular}[t]{l}$\gamma_+$\end{tabular}}}}%
    \put(0.10905499,0.40582059){\color[rgb]{0,0,0}\makebox(0,0)[lt]{\lineheight{1.25}\smash{\begin{tabular}[t]{l}$\ell\text{ full twists}$\end{tabular}}}}%
    \put(0,0){\includegraphics[width=\unitlength,page=2]{Mazur+.pdf}}%
  \end{picture}%
\endgroup%

  \vspace{0cm}
 \caption{\smaller[1]{The Mazur manifold $W^\pm(\ell,k)$, and the knot $\gamma_\pm$ on its boundary. Note that the diffeomorphism type of $W^-(\ell,k)$ only depends on $\ell+k$.}}
 \label{Mazur}
 \end{figure}

We recall the following result which is deduced from \cite[Proposition 1]{AK}. We write that $X_1$ is (orientation-preserving) diffeomorphic to $X_2$ by $X_1\cong X_2$.
\begin{teo}[Akbulut-Kirby]
 \label{teo:AK}
 The Mazur manifold $W^\pm(\ell,k)$ satisfies the following properties:
 \begin{enumerate}
     \item $W^\pm(\ell,k)\cong W^\pm(\ell+1,k-1)$ for every $\ell,k\in\Z$;
     \item $W^-(\ell,k)\cong-W^+(-\ell+2,-k+1)$ and the diffeomorphism identifies $\gamma_-$ with $\gamma_+$ for every $\ell,k\in\Z$;
     \item $\gamma_-$ is smoothly slice in $W^-(\ell,k)$ if $(\ell,k)=(2,k)$ for every $k\in\Z$.
 \end{enumerate}
\end{teo}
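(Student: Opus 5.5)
The plan is to prove all three items by Kirby calculus on the diagram of Figure \ref{Mazur}. Here $W^\pm(\ell,k)$ is given by a dotted circle (the $1$-handle) and a $k$-framed $2$-handle. The $2$-handle runs geometrically twice and algebraically once through the dotted circle, with $\ell$ full twists on the two parallel strands. The knot $\gamma_\pm$ is drawn in the boundary. I will track $\gamma_\pm$ through every move, since items (2) and (3) refer to it.

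For (1), I would use the self-diffeomorphism of $S^1\times D^3$ obtained by twisting once along the disk bounded by the dotted circle. On the diagram this inserts a full twist into all strands passing through the dotted circle. Those strands are exactly the strands in the twist box, so $\ell$ becomes $\ell\pm1$. The framing of the $2$-handle changes by the square of its algebraic linking with the dotted circle, which is $1$, so $k$ becomes $k\mp1$. With the orientation conventions of the figure this should give $W^\pm(\ell,k)\cong W^\pm(\ell+1,k-1)$. The diffeomorphism is the identity away from a neighbourhood of the spanning disk, so it carries $\gamma_\pm$ to the corresponding curve in the new picture.

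For (2), I would use that $-W$ is described by the mirror diagram with all framings and twists negated. Mirroring $W^-(\ell,k)$ turns its clasp into the clasp of the $W^+$ picture, but only after a planar isotopy of the clasp region. That isotopy moves crossings into the twist box and changes the writhe, so the result is $W^+(-\ell+a,-k+b)$ for a fixed correction $(a,b)$. By (1) only $a+b$ matters, and counting crossings should give $a+b=3$, matching $(2,1)$. I would then check directly that the mirror image of $\gamma_-$ is carried to $\gamma_+$ under this isotopy. The main obstacle is this careful crossing count, with its signs and framing conventions.

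For (3), by (1) it suffices to treat a single representative with $\ell=2$. There I would isotope $\gamma_-$ in $\partial W^-(2,k)$, sliding it over the $2$-handle if necessary, until it is disjoint from and unlinked with both handles. In dotted-circle notation the two full twists should allow exactly this unlinking. Once $\gamma_-$ is a split unknot in the boundary of the $0$-handle, it bounds a disk there, and pushing that disk into the interior gives a smooth slice disk in $W^-(2,k)$. As an alternative check, attaching a $0$-framed $2$-handle along $\gamma_-$ should give a diagram in which one handle pair cancels.
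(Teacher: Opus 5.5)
The paper does not reprove this result; it quotes it from \cite[Proposition 1]{AK}, and Kirby calculus is the right tool. However, your plan for (3) cannot work. Suppose $\gamma_-$ could be isotoped in $\partial W^-(2,k)$ to a split unknot in the diagram. Then it would bound a disk in the complement of the surgery link, hence in $Y^-(2+k)$ itself, so $\gamma_-$ would be the unknot in $Y$. For $k=1,2$ this contradicts Proposition~\ref{prop:tau}: Hedden's invariant of the unknot is $0$ for every contact structure, whereas $\mathcal T(3)=\{0,1\}$ and $\mathcal T(4)=\{0,2\}$. It would also make the hypothesis $\ell=2$ pointless, since an unknot is slice in every filling.

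The slice disk must be allowed to cross the attaching circle of the $2$-handle. Write $U$ for the dotted circle and $K$ for the attaching circle. What you need is a representative of $\gamma_-$ in $S^3\setminus(U\cup K)$ that bounds an embedded disk $\Delta\subset S^3$ disjoint from $U$ but possibly meeting $K$. Push the interior of $\Delta$ into the $0$-handle less deeply than the pushed-in spanning disk of $U$. This gives a properly embedded disk in $S^1\times D^3\subset W^-(2,k)$, and its interior automatically misses the $2$-handle. The belt circle of the $2$-handle is the simplest example. Such a disk never sees the framing $k$, in line with the statement being uniform in $k$. Your ``alternative check'' is not a sliceness criterion: a handle cancellation after attaching a $0$-framed $2$-handle along $\gamma_-$ does not by itself produce a disk bounded by $\gamma_-$.

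There are also problems in (1) and (2). Geometric and algebraic intersection numbers with the spanning disk of the dotted circle have the same parity, so a $2$-handle running ``geometrically twice and algebraically once'' through it is impossible. A Mazur $2$-handle crosses that disk at least three times, and the twist in (1) acts on the whole bundle of strands through the dotted circle. Your framing change $\lk^2=1$ is correct, but you must check that this full bundle is what the twist box in Figure~\ref{Mazur} encodes.

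More seriously, your claim that this twist carries $\gamma_\pm$ to $\gamma_\pm$ is unjustified. It requires $\gamma_\pm$ to miss the spanning disk. If it did, and $\gamma_\pm$ is unknotted in $S^3$, then $\gamma_\pm$ would bound a disk in $S^3\setminus U$ and be slice in every $W^\pm(\ell,k)$. The restriction $\ell=2$ in (3), and the paper's care to work in $Y^-(2,m-2)$, strongly suggest that $\gamma_\pm$ is not preserved. Consequently, in (2) you cannot invoke (1) to reduce to checking $a+b=3$. The identification of $\gamma_-$ with $\gamma_+$ is claimed for the specific parameters $(-\ell+2,-k+1)$, so your mirror-and-isotopy argument must land exactly on the correction $(2,1)$ while tracking $\gamma$. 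Separately, (1) never relates $W^-(2,k)$ for different $k$, so the reduction ``by (1)'' at the start of your (3) accomplishes nothing.
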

From Property 1) above we can immediately deduce that the diffeomorphism type
of $W^\pm(\ell,k)$ only depends on $\ell+k$; hence, from now on we may denote the manifolds in Figure \ref{Mazur} by $W^\pm(m)$ and their boundaries by $Y^\pm(m)$ where $m\in\Z$. In addition, from Property 2) we see that $W^-(m)\cong-W^+(-m+3)$.
In a similar way, combining Properties 2) and 3) yields that $\gamma_+$ is smoothly slice in $W^+(\ell,k)$ if $(\ell,k)=(0,h)$ where $h\in\Z$.

The manifold $W^-(0)$ was the first known example of \emph{cork}, a contractible 4-manifold $X$ whose boundary has an involution which does not extend to $X$, and it has been extensively studied in literature, see for example \cite{AD,Cork,DHM}. Moreover, it is proved in \cite[Theorem 2]{AK} that the Brieskorn spheres we are concerned with here appear as the boundary\footnote{In \cite{AK} Akbulut and Kirby do not specify the orientation of $W^\pm(m)$. By checking their proofs explicitly, one can see that the canonical orientation, induced by the Milnor fiber, is opposite to the one in \cite[Theorem 2]{AK} while it agrees with the one in \cite{Akbulut}.} of the 4-manifolds in Figure \ref{Mazur}:
\[Y^-(m)\cong\left\{\begin{aligned}\Sigma(2,3,13)&\text{ if }m=2\:;\\
    \Sigma(2,5,7)&\text{ if }m=3\:; \\
    \Sigma(3,4,5)&\text{ if }m=4\;.\end{aligned}\right.\]

It is very easy to observe that almost all these manifolds carry a Stein structure with at least one orientation; such structures are shown in Figure \ref{Stein}. The ones corresponding to the three cases above are among the exceptions. 

\begin{figure}[h]
 \centering
 \vspace{0cm}
  \def\svgwidth{0.48\textwidth}
\begingroup%
  \makeatletter%
  \providecommand\color[2][]{%
    \errmessage{(Inkscape) Color is used for the text in Inkscape, but the package 'color.sty' is not loaded}%
    \renewcommand\color[2][]{}%
  }%
  \providecommand\transparent[1]{%
    \errmessage{(Inkscape) Transparency is used (non-zero) for the text in Inkscape, but the package 'transparent.sty' is not loaded}%
    \renewcommand\transparent[1]{}%
  }%
  \providecommand\rotatebox[2]{#2}%
  \newcommand*\fsize{\dimexpr\f@size pt\relax}%
  \newcommand*\lineheight[1]{\fontsize{\fsize}{#1\fsize}\selectfont}%
  \ifx\svgwidth\undefined%
    \setlength{\unitlength}{1600.46198327bp}%
    \ifx\svgscale\undefined%
      \relax%
    \else%
      \setlength{\unitlength}{\unitlength * \real{\svgscale}}%
    \fi%
  \else%
    \setlength{\unitlength}{\svgwidth}%
  \fi%
  \global\let\svgwidth\undefined%
  \global\let\svgscale\undefined%
  \makeatother%
  \begin{picture}(1,0.27927833)%
    \lineheight{1}%
    \setlength\tabcolsep{0pt}%
    \put(0,0){\includegraphics[width=\unitlength,page=1]{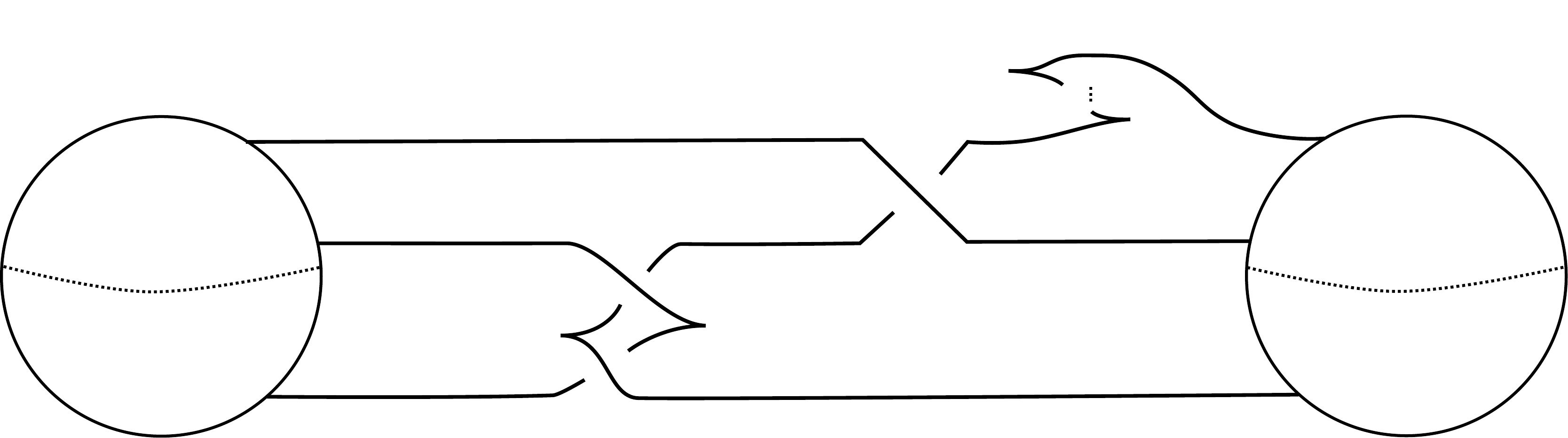}}%
    \put(0.62759032,0.26642254){\color[rgb]{0,0,0}\makebox(0,0)[lt]{\lineheight{1.25}\smash{\begin{tabular}[t]{l}$s\text{ times}$\end{tabular}}}}%
  \end{picture}%
\endgroup%

  \vspace{0cm}   
  \vspace{0cm}
  \def\svgwidth{0.48\textwidth}
\begingroup%
  \makeatletter%
  \providecommand\color[2][]{%
    \errmessage{(Inkscape) Color is used for the text in Inkscape, but the package 'color.sty' is not loaded}%
    \renewcommand\color[2][]{}%
  }%
  \providecommand\transparent[1]{%
    \errmessage{(Inkscape) Transparency is used (non-zero) for the text in Inkscape, but the package 'transparent.sty' is not loaded}%
    \renewcommand\transparent[1]{}%
  }%
  \providecommand\rotatebox[2]{#2}%
  \newcommand*\fsize{\dimexpr\f@size pt\relax}%
  \newcommand*\lineheight[1]{\fontsize{\fsize}{#1\fsize}\selectfont}%
  \ifx\svgwidth\undefined%
    \setlength{\unitlength}{1600.46198327bp}%
    \ifx\svgscale\undefined%
      \relax%
    \else%
      \setlength{\unitlength}{\unitlength * \real{\svgscale}}%
    \fi%
  \else%
    \setlength{\unitlength}{\svgwidth}%
  \fi%
  \global\let\svgwidth\undefined%
  \global\let\svgscale\undefined%
  \makeatother%
  \begin{picture}(1,0.27927833)%
    \lineheight{1}%
    \setlength\tabcolsep{0pt}%
    \put(0,0){\includegraphics[width=\unitlength,page=1]{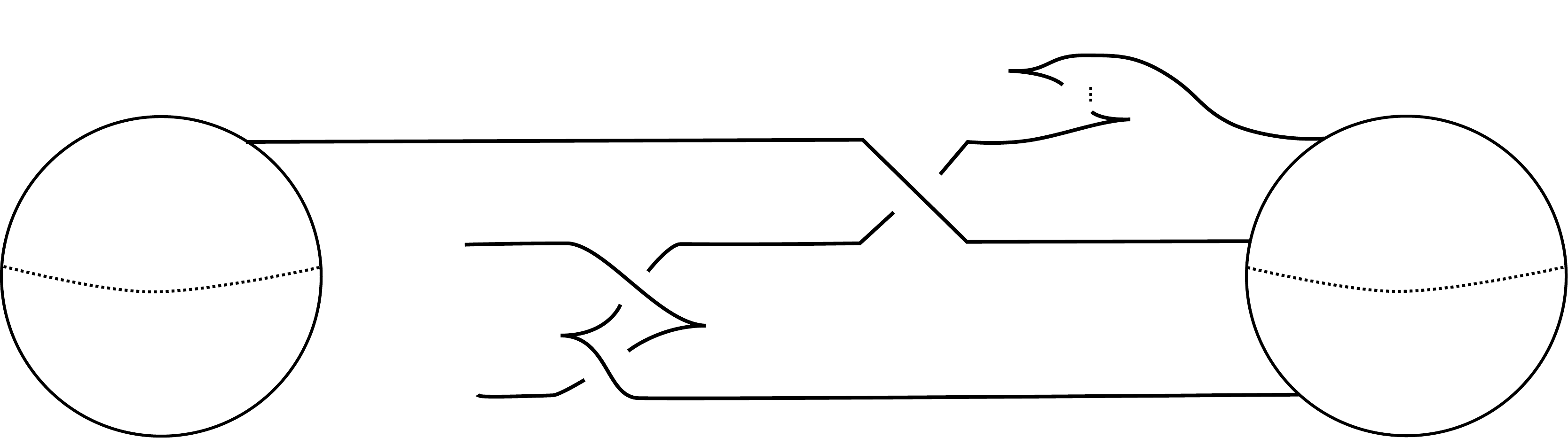}}%
    \put(0.62759032,0.26642254){\color[rgb]{0,0,0}\makebox(0,0)[lt]{\lineheight{1.25}\smash{\begin{tabular}[t]{l}$s\text{ times}$\end{tabular}}}}%
    \put(0,0){\includegraphics[width=\unitlength,page=2]{Stein+.pdf}}%
  \end{picture}%
\endgroup%

  \vspace{0cm}
 \caption{\smaller[1]{Stein structures on $W^-(0,1-s)$ (left) and $W^+(1,-4-s)$ (right). It is easy to check that the Legendrian knots where we attach the Stein 2-handles have $\tb$-number equal to $2-s$ and $-3-s$, where $s$ is the number of stabilizations.}}
 \label{Stein}
 \end{figure}

Ozsv\'ath and Szab\'o \cite{OSz-negative} defined the cobordism map $F^\circ_{W,\mathfrak u}:HF^\circ(S^3)\rightarrow HF^\circ(Y:=\partial W,\mathfrak u\lvert_Y)$ induced by any 4-manifold $W$, equipped with a $\Spin^c$-structure $\mathfrak u$; in particular, when we take $W=W^\pm(m)$ then we do not need any additional data, as integral homology balls admit a unique $\Spin^c$-structure. We recall that cobordism maps have a well-defined degree-shift, which is shown to be vanishing when $b_1(W)=b_2(W)=0$.  

We know from \cite{OSz-negative} that if $b_1(W)=0$ and $W$ is negative-definite then $F^-_{W,\mathfrak u}(1)$ is a non-torsion element of the $\F[U]$-module $HF^-(Y,\mathfrak u\lvert_Y)$, where $\F$ is the field with two elements. Hence, we have that  $F^-_{W^\pm(m)}(1)\in HF^-(Y^\pm(m))$ is non-torsion for each $m$, and is homogeneous with Maslov grading zero. When $m=2,3,4$ our manifolds are canonically oriented Brieskorn spheres, thus they are presented by an almost-rational graph, see \cite{Nemethi0} for the definition. It follows that we can produce a canonical basis $\mathcal B=\{[V_1],...,[V_t]\}$ of $HF^-(Y^-(m))/U\cdot HF^-(Y^-(m))$ for $m=2,3,4$ in terms of the full paths of their graph, we refer to \cite{OSz-fullpath,CM-negative} for more details. 

In Section \ref{section:three} we determine the coordinates of $F^-_{W^\pm(m)}(1)$ with respect to $\mathcal B$ when $m=3,4$, while we give restrictions in the case that $m=2$. We achieve this by computing the $\tau$-invariants of the knot $\gamma^-\subset Y^-(m)$ in Figure \ref{Mazur}, which is smoothly slice in $W^-(2,m-2)$ by Theorem \ref{teo:AK}. In the paper we use different versions of the $\tau$-invariant of a knot $K\subset Y$: the first is $\tau_\xi(K)$ introduced by Hedden in \cite{Hedden} when $Y$ carries a contact structure $\xi$ with $\widehat c(\xi)\neq0$, while the second one is $\tau_\theta(K)$ introduced by the author and Alfieri in \cite{AC}, see Section \ref{section:two} for the definition. Using \cite[Theorem 1.3]{AC} we can easily compute $\tau_\xi(K)$ when the knot $K$ is the boundary of a symplectic surface in a Stein domain; this means that under the right assumptions we can write an explicit formula for the invariant.

\begin{prop}
 \label{teo:AC}
 Consider the knot $K\subset Y=S^3_\Lambda(L)$ with $\det(\Lambda)\neq0$ such that $K\cup L$ has a Legendrian realization $\mathcal K\cup\mathcal L$ in $(S^3,\xist)$ where $\Lambda_{ii}=\tb_{\xist}(\mathcal L_i)-1$ for $i=1,...,|L|$, and $\mathcal K$ bounds a Lagrangian surface in $D^4$. Then we have the following formula: \[2\tau_\xi(\pm K)-1=\tb_{\xist}(\mathcal K)-L^T\Lambda^{-1}L\pm L^T\Lambda^{-1}\mathbf V\:,\] where \[L=\big(\lk(K,L_1),...,\lk(K,L_{|L|})\big)\:,\hspace{0.5cm}\mathbf V=\big(\rot_{\xist}(\mathcal L_1),...,\rot_{\xist}(\mathcal L_{|L|})\big)\] and $\xi$ is obtained by Legendrian surgery on $\mathcal L$. In particular, this holds when $K$ is the dual knot of a component of $L$.  
\end{prop}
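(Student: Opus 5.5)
The plan is to reduce the formula to \cite[Theorem 1.3]{AC}: if a knot $K$ in the boundary of a Stein domain $(W,J)$ bounds a properly embedded symplectic (here Lagrangian-derived) surface $\Sigma$, then $\tau_\xi(K)$ is determined by the adjunction-type equality
\[2\tau_\xi(K)-1=-\chi(\Sigma)+[\Sigma]^2-\langle c_1(J),[\Sigma]\rangle,\]
or whatever normalisation \cite{AC} uses for a relative class. The Stein domain is the one obtained by attaching Weinstein $2$-handles to $D^4$ along $\mathcal L$. This is possible because $\Lambda_{ii}=\tb_{\xist}(\mathcal L_i)-1$, and the boundary contact structure is $\xi$. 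Since $\det\Lambda\neq0$, the manifold $W_\Lambda$ is a rational homology $4$-manifold relative to $Y$, with $H_2(W_\Lambda;\Q)$ spanned by the cores-plus-Seifert-surfaces $[\Sigma_i]$ and intersection form $\Lambda$.

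\textbf{Step 1: building the surface.} Start from the Lagrangian surface $F\subset D^4$ bounded by $\mathcal K$. Push it slightly so that it is symplectic, then extend it into $W_\Lambda$. The class of $K$ in $H_1(Y;\Q)$ is generally nonzero, so $F$ alone is not closed up inside $W_\Lambda$ as a surface with boundary $K\subset Y$. Instead I will work with the rational relative class
\[[\widehat F]=[F]-\sum_i (\Lambda^{-1}L)_i[\Sigma_i],\]
which is the unique rational combination making the class have boundary $K$ in $Y$; equivalently, it is orthogonal to every $[\Sigma_i]$ after capping.

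\textbf{Step 2: computing the terms.} With this class fixed, the three terms come out as follows.
\begin{itemize}
\item The self-intersection, taken with the framing induced by the contact structure, is $\tb_{\xist}(\mathcal K)-L^T\Lambda^{-1}L$.
\item The Chern class evaluates as $\langle c_1(J),[\Sigma_i]\rangle=\rot_{\xist}(\mathcal L_i)$ (Gompf), and $\langle c_1,[F]\rangle$ contributes $\rot(\mathcal K)$, which combines with $\chi(F)$. Because $F$ is Lagrangian, $\tb(\mathcal K)=-\chi(F)$ and $\rot(\mathcal K)=0$, so only the correction term $\pm L^T\Lambda^{-1}\mathbf V$ survives.
\item The terms $-\chi$ and $\tb$ combine to give the displayed right-hand side.
\end{itemize}
Reversing the orientation of $K$ flips the sign of $L$ and of the pairing with $c_1$, but not of the quadratic term. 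This produces the $\pm$.

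\textbf{Step 3: the final sentence about dual knots.} When $K$ is the dual knot of $L_j$, realise $\mathcal K$ as a Legendrian meridian of $\mathcal L_j$, namely a $\tb=-1$ unknot. This bounds a Lagrangian disk in $D^4$, and $\lk(K,L_i)=\delta_{ij}$, so the hypotheses hold.

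The main obstacle is Step 1 together with the rational bookkeeping. One has to make sure \cite[Theorem 1.3]{AC} applies to rationally null-homologous knots in rational homology spheres and to surfaces whose class is only rational, and one has to pin down the sign conventions relating $\tau_\xi(\pm K)$, the rotation numbers, and $\Lambda^{-1}$. I would first verify these conventions on the simplest example, a Legendrian unknot with a meridian, where both sides can be computed directly.
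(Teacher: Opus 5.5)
Your overall plan matches the paper's: Legendrian surgery on $\mathcal L$ gives the Stein domain $W_\Lambda$, you perturb the Lagrangian filling to be symplectic, invoke \cite[Theorem 1.3]{AC}, and finish with linear algebra. Step 1, however, breaks the argument.

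\textbf{The premise of Step 1 is false.} $\mathcal K$ lies in $S^3\setminus\nu(\mathcal L)\subset Y$, so the filling $F\subset D^4\subset W_\Lambda$ is already properly embedded with $\partial F=K$. After perturbation it stays symplectic in $W_\Lambda$, because the Stein handles are attached away from it. Besides, $H_1(Y;\Q)=0$ since $\det\Lambda\neq0$.

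\textbf{The replacement class is unusable.} You note yourself that $[\widehat F]$ pairs trivially with every $[\Sigma_i]$. These span $H_2(W_\Lambda;\Q)$, which Lefschetz duality pairs perfectly with $H_2(W_\Lambda,Y;\Q)$, so $[\widehat F]=0$ in $H_2(W_\Lambda,Y;\Q)$. It is the class of a rational Seifert surface of $K$ pushed into $W_\Lambda$, not carried by any symplectic surface, so \cite[Theorem 1.3]{AC} says nothing about it. If you plugged it in anyway, both homological terms would vanish and you would get $2\tau_\xi(\pm K)-1=\tb_{\xist}(\mathcal K)$. This already fails for $K_3$ in Proposition \ref{prop:tau}, where $\tb=-1$ but $\tau_{\overline\xi}(K_3)=1$.

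\textbf{What Step 2 actually computes.} Up to sign conventions, your Step 2 quantities $\tb_{\xist}(\mathcal K)-L^T\Lambda^{-1}L$ and $\rot_{\xist}(\mathcal K)-\mathbf V^T\Lambda^{-1}L$ are the rational Thurston--Bennequin and rotation numbers of $K$ in $(Y,\xi)$. So you are tacitly asserting $2\tau_\xi(K)-1=\mathrm{sl}_\Q$. That would need a separate sharp Bennequin / relative adjunction equality, which you neither cite nor prove, and it does not follow from the version of \cite[Theorem 1.3]{AC} you wrote.

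\textbf{The fix.} Drop Step 1 and use $[F]$ itself.
\begin{enumerate}
\item $[F]\cdot[\Sigma_i]=\lk(K,L_i)$, since these are two surfaces in $D^4$ with disjoint boundaries. Hence rationally $[F]=\sum_k(\Lambda^{-1}L)_k[\Sigma_k]$ and $[F]\cdot[F]=L^T\Lambda^{-1}L$.
\item Gompf's $\langle c_1(J),[\Sigma_i]\rangle=\rot_{\xist}(\mathcal L_i)$ then gives $\langle c_1(J),[F]\rangle=L^T\Lambda^{-1}\mathbf V$.
\item Chantraine gives $2g(F)-1=\tb_{\xist}(\mathcal K)$.
\end{enumerate}
Feeding these into \cite[Theorem 1.3]{AC} in the form $2\tau_\xi(\pm K)-1=2g(F)-1-[F]\cdot[F]\pm\langle c_1(J),[F]\rangle$ yields the formula directly.

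Mind the sign of the square. With your $+[\Sigma]^2$ and the correct class you would get $\tb_{\xist}(\mathcal K)+L^T\Lambda^{-1}L$ instead.

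Your handling of the $\pm$ is fine once you add that $F$ can be perturbed to be symplectic with either orientation. Your treatment of the dual-knot case is also fine.
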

\begin{proof}
 Since $\mathcal K$ bounds a Lagrangian surface $\Sigma$ in $D^4$, equipped with its unique Stein structure, we have that the knot $\pm K$ admits a transverse realization which bounds a symplectic curve. From a result of Chantraine \cite{Baptiste} we obtain that \begin{equation}\tau(\pm K)=g_4(D^4,K)=g(\Sigma)=\dfrac{\tb_{\xist}(\mathcal K)+1}{2}\:,\label{eq:Baptiste}\end{equation} where here $\tau(K)$ stands for the classical $\tau$-invariant in $S^3$. We now observe that $\mathcal K$ is Legendrian in the complement of $\mathcal L$; hence, the surface $\Sigma$ can be perturbed to be symplectic away from the Stein 2-handle attachments. Thus the perturbed surfaces remain symplectic also in the resulting Stein filling $(X,J)$ of $Y$, and we can apply \cite[Theorem 1.3]{AC} to get \[2\tau_\xi(\pm K)-1=2g(\Sigma)-1-[\Sigma]\cdot[\Sigma]\pm c_1(J)[\Sigma]\:.\] The Stein filling $(X,J)$ is such that $X$ is obtained by attaching 2-handles on $L$ with framings given by $\Lambda$, while the induced $\Spin^c$-structure is determined by the Chern class $c_1(J)=\mathbf V\in H^2(X;\Z)$, because $X$ is simply connected; we then conclude by observing that $\tb_{\xist}(\mathcal K)=2g(\Sigma)-1$ from Equation \eqref{eq:Baptiste}, while by linear algebra $[\Sigma]\cdot[\Sigma]=L^T\Lambda^{-1}L$ and $c_1(J)[\Sigma]=\mathbf V^T\Lambda^{-1}L=L^T\Lambda^{-1}\mathbf V$. 
\end{proof}

Note that in general $\tau_\xi(-K)=\tau_{\overline\xi}(K)$ where $\overline\xi$ is the conjugate of $\xi$, see \cite[Theorem 3.2]{CM-negative}. Note that some authors, see \cite{HR} for an example, use a different convention for the Alexander grading, interchanging the value of $\tau_\xi(K)$ and $\tau_\xi(-K)$; this is going to be irrelevant in this paper as the orientation of $K$ is not meaningful for our obstructions.

We apply our results to produce examples of exotic pairs of simply connected 4-manifolds, with definite intersection form, whose boundary is a Brieskorn sphere $Y$ bounding a Mazur manifold; these include $Y^-(m)$ for $m=2,3,4$. Our result is more general: let $Y$ be any 3-manifold which has a simply connected negative-definite Stein filling $X$ with $b_2(X)>0$ and bounds a Mazur manifold $W$, denote by $\widehat X$ any 4-manifold obtained by gluing $-W$ to $X$. 

\begin{teo}
 \label{teo:exotic}
 Consider $X_1:=X,$ $X_2:=\widehat X\#W$ and $X_3:=\#m\overline{\C P}^2\#W$, where $m=b_2(X)\geq1$, as above. The simply connected $4$-manifolds $X_1$ and $X_i$ with $i=2,3$ have definite-intersection form, and are homeomorphic but not diffeomorphic. 
\end{teo}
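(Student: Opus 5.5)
The plan is to check three things in turn: simple connectivity and the intersection forms, then homeomorphism via Freedman--Boyer, and finally non-diffeomorphism using the Ozsv\'ath--Szab\'o contact invariant together with Plamenevskaya's uniqueness of the Stein $\Spin^c$-structure. The last step is the main obstacle.

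\emph{Topology.} Since $W$ is contractible, $Y=\partial W$ is an integral homology sphere and $-W$ is contractible. By van Kampen each $X_i$ is simply connected, and $\widehat X=X\cup_Y -W$ is a closed simply connected $4$-manifold. Mayer--Vietoris gives $H_2(\widehat X)\cong H_2(X)$ with the same intersection form, and this form is unimodular and negative definite. Donaldson's diagonalization theorem then gives $Q_{\widehat X}\cong -I_m$, hence also $Q_X\cong -I_m$. The forms of $X_2=\widehat X\# W$ and of $X_3=\#m\overline{\C P}^2\# W$ are also $-I_m$, because $W$ contributes nothing to $H_2$. So $X_1,X_2,X_3$ are simply connected, have definite form $-I_m$, and have boundary $Y$. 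All three are smooth, so their Kirby--Siebenmann invariants vanish. Boyer's extension of Freedman's classification to simply connected manifolds with homology sphere boundary then shows that they are pairwise homeomorphic.

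\emph{Non-diffeomorphism.} Let $J$ be the Stein structure on $X$ and $\xi$ the induced contact structure on $Y$. Suppose $\phi\colon X_i\to X$ is a diffeomorphism for $i=2$ or $3$; after possibly composing with $-\Id$ on homology, we may assume $\phi$ preserves the boundary orientation. By Plamenevskaya's theorem, the map $F^+_{-X,\mathfrak s}$ sends the contact class $c^+(\xi)$ to a nonzero element only for the canonical $\Spin^c$-structure $\mathfrak s=\mathfrak s_J$; in particular only one $\Spin^c$-structure gives a nonzero value. Transport this statement to $X_i$ through $\phi$. Write $X_i$ minus a ball as the boundary connected sum of $W$ with $Z$ minus a ball, where $Z=\widehat X$ or $Z=\#m\overline{\C P}^2$. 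By the composition law, the maps for $X_i$ factor as the map of $-W$ composed with the map of $Z$ with two balls removed. Since $Z$ is closed, simply connected and negative definite with diagonal form $-I_m$, the factor coming from $Z$ depends on $c_1(\mathfrak s\lvert_Z)$ only through $c_1^2$. Concretely, it is the blow-up type contribution $U^{(c_1^2+m)/8}$, which is nonzero exactly for the characteristic vectors of maximal square.

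It follows that the set of $\Spin^c$-structures with nonzero value is invariant under the reflections $e_j\mapsto -e_j$ of the diagonal basis. By Plamenevskaya this set must be the single element $c_1(J)$. But every characteristic vector of $-I_m$ has odd pairing with each $e_j$, so such a reflection moves $c_1(J)$ to a different class. This is a contradiction, so $X\not\cong X_i$.

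The step I expect to be hardest is making rigorous the claim that, for the punctured negative definite manifold $\widehat X$, the cobordism maps are symmetric under sign changes of $c_1$ along the diagonal basis; this is needed for $i=2$. I would try to deduce it from conjugation invariance of the maps together with the fact that, for a negative definite cobordism, the maps only see the grading shift $(c_1^2+m)/4$. If that fails, I would fall back on the fact that $F^-_{X_i,\mathfrak s}(1)$ factors through $F^-_W(1)$, which is the same element for every $\mathfrak s$ up to a power of $U$, and compare this with the Stein fact that $F^-_{X,\mathfrak s}(1)$ in the top degree pairs nontrivially with the contact invariant only for $\mathfrak s_J$.

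For $X_3$ there is an independent easier argument. The manifold $X_3$ contains an embedded sphere of square $-1$ representing a class $e_j\neq 0$. By the adjunction inequality for Stein domains (Lisca--Mati\'c), no nontrivial class in $X$ is represented by a sphere $S$ with $[S]^2+|c_1(J)[S]|>-2$. A sphere of square $-1$ violates this, so $X_3\not\cong X$.
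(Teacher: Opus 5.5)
Your proposal is correct. The topological half (Donaldson on $\widehat X$ to get $-I_m$, then Freedman--Boyer) is the paper's argument.

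For non-diffeomorphism you use the same ingredients as the paper:
\begin{itemize}
\item Plamenevskaya's theorem, including its vanishing part for non-canonical $\Spin^c$-structures;
\item the factorization of the maps of $X_i$ through $W$ and the twice-punctured closed summand $Z^\circ$;
\item the fact that $Z^\circ$ acts on $HF(S^3)$ by a power of $U$ fixed by $c_1^2$.
\end{itemize}
The difference is the symmetry you exploit. The paper works on the Floer side. Every $\widehat F_{X_i,\mathfrak u}(1)$ lies in $\{0,\theta\}$ with $\J\theta=\theta$, because $W$ has a unique $\Spin^c$-structure. Plamenevskaya applied to $J$ and $-J$ shows that $\widehat F_{X_1,J}(1)$ is not $\J$-invariant. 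This route needs $\J$ to commute with cobordism maps and with the duality pairing.

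You instead count the $\Spin^c$-structures on which the contact class survives. For $X_i$ this set is invariant under every isometry of $H^2(Z)$; for a Stein $X$ it is a singleton; and a reflection in a diagonal basis vector moves every characteristic vector. This avoids $\J$ entirely. Using $c\mapsto -c$ in place of a reflection recovers the paper's argument phrased on the $\Spin^c$ side. The paper's version, in exchange, isolates a single self-conjugacy invariant tied to the class $\theta$ of Proposition~\ref{prop:cobordism}.

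The step you flag as hardest is in fact immediate. For a negative-definite cobordism with $b_1=0$ the map $F^\infty$ is an isomorphism, and $HF^+(S^3)$ is a single tower. Hence $F^+_{Z^\circ,\mathfrak s}=U^{-(c_1^2+m)/8}$; note the sign, since the exponent must be nonnegative. This is the same fact the paper quotes as ``zero or the identity'' on $\widehat{HF}$. Your phrase ``nonzero exactly for maximal square'' is the $\widehat{HF}$ statement rather than the $HF^+$ one, but only the $c_1^2$-dependence matters for your argument.

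No orientation adjustment is needed either. Any diffeomorphism $X_i\to X$ preserves orientation, because both forms are negative definite and $m\geq 1$.

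Your separate argument for $X_3$ is genuinely more elementary: by the Lisca--Mati\'c adjunction inequality, a Stein domain contains no smoothly embedded $(-1)$-sphere. It does not reach $X_2$, since $\widehat X$ need not contain such a sphere.
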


The only known example of such an exotic pair was found by Akbulut in \cite{Akbulut} with different techniques. We do not claim that the 4-manifolds $X_2$ and $X_3$ above are not diffeomorphic. If this were the case, then applying this result to $Y^-(m)$ would give the first examples of exotic simply connected closed 4-manifolds with definite intersection form. Nonetheless, we can state the following corollary.

\begin{cor}
 \label{cor:exotic}
 There exist two smooth embeddings $f$ and $g$ of a Brieskorn sphere $Y$, in a simply connected closed $4$-manifold $M$ with definite intersection form, such that $f(Y)$ and $g(Y)$ are topologically isotopic but no diffeomorphism of $M$ maps $f(Y)$ to $g(Y)$. Furthermore, the manifold $Y$ may bound a Mazur manifold $W\subset M$.
\end{cor}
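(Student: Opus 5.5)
We need to prove Corollary \ref{cor:exotic} from Theorem \ref{teo:exotic}. The plan is to realize the exotic pair $X_1, X_2$ (or $X_1, X_3$) inside a common closed manifold, using the standard trick that an exotic pair with common boundary gives two embeddings of that boundary.

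\emph{Choice of manifolds.} Take $Y=Y^-(m)$ for some $m\in\{2,3,4\}$, which is a Brieskorn sphere bounding the Mazur manifold $W=W^-(m)$. Let $X$ be a simply connected negative-definite Stein filling of $Y$ with $b_2(X)>0$; for instance, the Milnor fiber or the plumbing from Figure \ref{Graph} after blowing down the $-1$ vertex, or any Stein filling given by a Legendrian surgery diagram. Set $X_1=X$ and $X_3=\#b\,\overline{\C P}^2\#W$ with $b=b_2(X)$. By Theorem \ref{teo:exotic}, $X_1$ and $X_3$ are homeomorphic and not diffeomorphic, both have boundary $Y$, and both are negative-definite. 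Define
\[M:=X_3\cup_Y -W\cong \#b\,\overline{\C P}^2\#D(W)\cong\#b\,\overline{\C P}^2,\]
using that $D(W)=S^4$. Then $M$ is closed, simply connected and has definite intersection form. Let $g$ be the embedding of $Y$ as $\partial W\subset M$; this is the boundary of the copy of $W$ sitting in the $X_3$ summand, so $Y$ bounds the Mazur manifold $W\subset M$.

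\emph{The second embedding.} Fix a homeomorphism $h\colon X_3\to X_1$, which I may take to restrict to the identity on $Y$ after composing with a self-homeomorphism of the boundary; this follows from Freedman, since the Brieskorn sphere homeomorphisms are realized and the intersection forms agree. Glue to obtain $M':=X_1\cup_Y -W$, which is homeomorphic to $M$ via $h\cup\mathrm{id}$. $M'$ is smooth, closed, simply connected, negative-definite, so by Donaldson it has form $-I$; by Freedman it is homeomorphic to $M$. I would like $M'$ to be diffeomorphic to $M$ in order to define $f$ by transport. If that is unknown, I instead take $M$ to be the stabilized ambient manifold, or I choose $W$ so that $M'$ is standard. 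The cleaner route is to use the homeomorphism $\Phi=h\cup \mathrm{id}\colon M\to M'$, and to arrange a diffeomorphism $\Psi\colon M'\to M$. Then set $f:=\Psi\circ\Phi\circ g$.

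\emph{Topological isotopy and non-equivalence.} The two embeddings differ by a homeomorphism of $M$ acting trivially on $H_2$ (which we can arrange), and by Quinn/Perron such a homeomorphism of a simply connected closed 4-manifold is topologically isotopic to the identity; this gives the topological isotopy. Suppose a diffeomorphism $\phi$ of $M$ maps $f(Y)$ to $g(Y)$. Then $\phi$ carries the complementary pieces to each other: $\Psi(X_1)$ is sent to $X_3$ or to $-W$. Since $-W$ is contractible and $X_1$ has $b_2>0$, it must go to $X_3$, which gives $X_1\cong X_3$ and contradicts Theorem \ref{teo:exotic}.

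\emph{Main obstacle.} The main difficulty is producing the diffeomorphism $\Psi\colon M'\to M$, i.e.\ showing that $X\cup_Y -W$ is diffeomorphic to $\#b\,\overline{\C P}^2$. I would first try to show this via Kirby calculus for the explicit Stein fillings of $Y^-(m)$, since cancelling the dotted circle of $-W$ against a handle of $X$ should yield a standard diagram. If that fails, the fallback is to take $M:=X_2\cup -W$ and $X_2=\widehat X\#W$, so that $M\cong \widehat X\# S^4\cong \widehat X\cong X\cup -W$ holds tautologically. This gives a diffeomorphism for free, and the argument above then goes through with $X_2$ in place of $X_3$.
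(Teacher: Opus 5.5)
Your fallback is the paper's proof. The ambient manifold is $M=X_2\cup_Y-W\cong\widehat X$ (the paper uses $-\widehat X$). One copy of $Y$ bounds a Mazur manifold in a small ball and the other is $\partial X$, and non-equivalence follows by comparing complements and invoking Theorem~\ref{teo:exotic}. The topological isotopy comes from Boyer's theorem, which realizes, rel the identity on $Y$, the isometry making the glued homeomorphism act trivially on $H_2$, followed by Quinn.

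Make the fallback your actual argument and drop the $X_3$ route. That route needs $X\cup_Y-W\cong\#b\,\overline{\C P}^2$, which is not available in general: the paper leaves open the closely related question of whether $X_2$ and $X_3$ are diffeomorphic.
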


\subsection*{Acknowledgments} {\smaller[1] I would like to thank the Matematiska institutionen at Uppsala universitet for their friendly hospitality. The author has been partially supported by the HORIZON-ERC-2023-ADG 101141468 KnotSurf4d project. }

\section{Tau-invariants of the knot \texorpdfstring{$\gamma_-$}{gamma-}}
\label{section:two}
Let us consider the Milnor fillable contact structure $\xi$ on $Y^-(2,m-2)\cong Y^-(m)$ where $m=3,4$, and then $Y^-(m)$ is either $\Sigma(2,5,7)$ or $\Sigma(3,4,5)$. We want to study the invariant $\tau_\xi(\gamma_-)$ defined by Hedden in \cite{Hedden}; more specifically, here we are going to consider the 2-element set $\mathcal T(m):=\{\tau_\xi(\gamma_-),\tau_{\overline\xi}(\gamma_-)\}$ where $\overline\xi$ is the contact structure obtained from $\xi$ by conjugation.

In general Hedden's invariant requires the knot to be oriented, but reversing the orientation acts in the same way as conjugating the structure: in other words, we have that $\tau_\eta(-K)=\tau_{\overline\eta}(K)$ for every knot $K$ in a contact 3-manifold $(Y,\eta)$ for which the $\tau$-invariant can be defined and $\widehat c(\eta)\neq0$. This implies that $\mathcal T(m)$ is actually an invariant of $\gamma_-$ as an unoriented knot, and thus in the remaining of the paper we do not specify any orientation for $\gamma_-$.

\subsection{Definition}
Let $Y$ be a rational homology 3-sphere.
For completeness we recall the definition of the invariant $\tau_\eta(K)$ for any knot $K\subset Y$ and $\eta$ is a contact structure on $Y$ with non-vanishing contact invariant, see \cite{Hedden}. Suppose that $\alpha\in\widehat{HF}(-Y)$ is a homogeneous non-zero homology class; then Hedden defined the invariant \[\tau_\alpha(K)=\min\{m\in\Q\:|\:\Imm i_*:H_*(\mathcal F^K_mCF(-Y))\rightarrow\widehat{HF}_*(-Y)\text{ contains }\alpha\}\] where $i:\mathcal F_m\widehat{CF}(-Y)\rightarrow\widehat{CF}(-Y)$ is the inclusion of the $m$-th level of the Alexander filtration, induced by $K$, in the Heegaard Floer chain complex of $-Y$. 

We then just set $\tau_\eta(K):=-\tau_{\widehat c(\eta)}(K)$ because $\widehat c(\eta)\in\widehat{HF}(-Y,\s_\eta)$, see \cite{OSz-contact}. Note that according to this definition, we have that for $(S^3,\xist)$ one has $\tau_{\xist}(K)=\tau(K)$ for every knot $K$.  

We now recall the definition of $\tau_\theta(K)$ from \cite[Subsection 2.2]{AC}. Say $K\subset Y$ is a knot, and suppose $Y$ is the boundary of a 4-manifold $X$ equipped with a $\Spin^c$-structure $\mathfrak u$. Then from Heegaard Floer theory \cite{OSz-negative} we construct the cobordism map $\widehat F_{X,\mathfrak u}:\widehat{HF}(S^3)\rightarrow\widehat{HF}(Y,\s)$ where $\s=\mathfrak u\lvert_Y$; hence, in the case that $\widehat F_{X,\mathfrak u}$ is non-trivial we have a distinguished non-zero element $\theta_\mathfrak u\in\widehat{HF}(Y,\s)$ defined as $\theta_\mathfrak u=\widehat F_{X,\mathfrak u}(1)$. We call $\tau_\theta(X,K,\mathfrak u)$ the invariant $\tau_{\theta_\mathfrak u}(K)$ associated to the homology class $\theta_\mathfrak u$. In other words, this is the minimal level of the Alexander filtration, induced by $K$ on $\widehat{HF}(Y,\s)$, that contains $\theta_\mathfrak u$.

Note that if $W$ is an integral homology 4-ball then for standard homological reasons there is a unique $\Spin^c$-structure on $W$, and thus we can write $\tau_\theta(W,K)$.

\subsection{Computation}
\label{subsection:computation}
We start to describe the argument: we construct a self-diffeomorphism\footnote{The Kirby moves have been performed by the KLO (Knot-Like Objects) software, version 0.978 alpha, available at \url{https://community.middlebury.edu/~mathanimations/klo/}.} $F$ of $Y^-(m)$ which maps a knot $K_m$ to $\gamma_-$, see Figures \ref{Diffeo1} and \ref{Diffeo2}, and we consider the contact structure obtained by the push-forward $F^*\xi$ of the Milnor fillable structure. We recall that we proved in \cite[Proposition 1.7]{ACM} that $\xi$ and its conjugate are the only symplectically fillable contact structures on both $\Sigma(2,5,7)$ and $\Sigma(3,4,5)$. 

It follows from \cite{OSz-contact} that the map induced by the contactomorphism $F^*:\widehat{HF}(-Y^-(m))\rightarrow\widehat{HF}(-Y^-(m))$ is an isomorphism and sends $\widehat c(\xi)$ to $\widehat c(F^*\xi)$. Moreover, the conjugation action induces an involution $\mathcal J$ on the Heegaard Floer groups, which has been studied by Ghiggini \cite{G-fillability}; he showed that $\mathcal J$ commutes with maps induced by diffeomorphisms and cobordisms, and that $\mathcal J\widehat c(\eta)=\widehat c(\overline\eta)$ for every contact structure $\eta$.
In our case this means that $F^*$ also sends $\widehat c(\overline\xi)$ to $\widehat c(\overline{F^*\xi})$; hence, the only possibility is that the structure $F^*\xi$ is either $\xi$ or $\overline\xi$, and we immediately see that $\mathcal T(m)$ is invariant under $F^*$. 

The presentation of $Y^-(m)$ in Figures \ref{Diffeo1} and \ref{Diffeo2} provides a Legendrian realization of $K_m$ in $(Y^-(m),\xi)$ which bounds a (homologically non-trivial) Lagrangian surface in the corresponding Stein domain $X(m)$, allowing us to compute $\mathcal T(m)$ by applying Proposition \ref{teo:AC}. We now give the details of the computation.

\begin{prop}
 \label{prop:tau}
 Suppose that $\gamma_-$ is the knot in $Y^-(2,m-2)$ for $m=3,4$ as in Figure \ref{Mazur}. Then $\mathcal T(m)=\{0,1\}$ when $m=3$, and $\mathcal T(m)=\{0,2\}$ when $m=4$.
\end{prop}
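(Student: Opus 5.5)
The plan is to use the self-diffeomorphism $F$ of $Y^-(m)$ from Subsection \ref{subsection:computation}, which sends $K_m$ to $\gamma_-$. As explained there, $F^*\xi\in\{\xi,\overline\xi\}$, so
\[\mathcal T(m)=\{\tau_\xi(\gamma_-),\tau_{\overline\xi}(\gamma_-)\}=\{\tau_{F^*\xi}(F(K_m)),\tau_{\overline{F^*\xi}}(F(K_m))\}=\{\tau_\xi(K_m),\tau_{\overline\xi}(K_m)\}\:.\]
By naturality of Hedden's invariant under contactomorphisms, and using $\tau_{\overline\xi}(K)=\tau_\xi(-K)$, this becomes $\mathcal T(m)=\{\tau_\xi(K_m),\tau_\xi(-K_m)\}$. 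This is precisely the pair of values that Proposition \ref{teo:AC} produces through its $\pm$ sign. The whole computation therefore reduces to reading off Legendrian data from the final diagrams in Figures \ref{Diffeo1} and \ref{Diffeo2}.

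For each $m=3,4$, the final frame of the Kirby sequence presents $Y^-(m)$ as Legendrian surgery $S^3_\Lambda(L)$ on a Legendrian link $\mathcal L$. This is a Stein domain $X(m)$ inducing $\xi$, presumably negative definite and related to the plumbing in Figure \ref{Graph}. The knot $K_m$ sits in that frame as a Legendrian $\mathcal K$ that bounds a Lagrangian surface in $D^4$. I expect $\mathcal K$ to be a Legendrian unknot with $\tb=-1$ (a Lagrangian disk) or a max-$\tb$ positive torus knot. From the frame I will record:
\begin{enumerate}
\item the framing matrix $\Lambda$, with diagonal entries $\tb(\mathcal L_i)-1$ and off-diagonal linking numbers, and check $\det\Lambda=\pm1$ since $Y^-(m)$ is an integral homology sphere;
\item the vector $L$ of linking numbers $\lk(K_m,L_i)$;
\item the rotation vector $\mathbf V$;
\item the value $\tb(\mathcal K)$.
\end{enumerate}

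Next I apply Proposition \ref{teo:AC}:
\[2\tau_\xi(\pm K_m)-1=\tb(\mathcal K)-L^T\Lambda^{-1}L\pm L^T\Lambda^{-1}\mathbf V\:.\]
The two signs give the two elements of $\mathcal T(m)$. The target values fix what the linear algebra must yield.
\begin{itemize}
\item For $m=3$: $\tb(\mathcal K)-L^T\Lambda^{-1}L=0$ and $|L^T\Lambda^{-1}\mathbf V|=1$, giving $\{0,1\}$.
\item For $m=4$: $\tb(\mathcal K)-L^T\Lambda^{-1}L=1$ and $|L^T\Lambda^{-1}\mathbf V|=2$, giving $\{0,2\}$.
\end{itemize}
Since the Milnor fillable structure is unique up to conjugation and isotopy by \cite[Proposition 1.7]{ACM}, I also need to check that the Stein structure drawn actually induces $\xi$ or $\overline\xi$. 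Both possibilities give the same set $\mathcal T(m)$, so this check is harmless.

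The main obstacle is the diagrammatic step: verifying that the sequence of Kirby moves really carries $K_m$ to $\gamma_-$, and that the final picture is a genuine Legendrian front with the stated $\tb$ and rotation numbers. Getting the signs of the $\rot$ values right matters less, since they only swap the two elements of $\mathcal T(m)$. Computing $L^T\Lambda^{-1}L$ with the correct orientation conventions is where an error would change the answer. I would cross-check by confirming that $c_1^2=\mathbf V^T\Lambda^{-1}\mathbf V$ gives $d_3(\xi)$ consistent with the known Milnor fillable structure.
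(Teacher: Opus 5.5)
Your proposal matches the paper's proof. The paper transfers $\mathcal T(m)$ to $\{\tau_\xi(K_m),\tau_{\overline\xi}(K_m)\}$ via the self-diffeomorphism $F$ and \cite[Proposition 1.7]{ACM}, then applies Proposition \ref{teo:AC} with its $\pm$ sign to $K_m$, which there is the standard Legendrian unknot ($\tb=-1$, $\rot=0$). The paper's explicit $\Lambda^{-1}$, $L$ and $\mathbf V$ give $\tb-L^T\Lambda^{-1}L=0$, $L^T\Lambda^{-1}\mathbf V=-1$ for $m=3$, and $1$, $-2$ for $m=4$, which are exactly the values you showed are forced, so your plan only lacks reading off these data from the surgery diagrams.
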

\begin{proof}
 The knot $K_m$ in the top of Figures \ref{Diffeo1} and \ref{Diffeo2} satisfies the assumptions of Proposition \ref{teo:AC}; in fact, the Stein filling $(X(m),J)$ is obtained by attaching Stein 2-handles in the complement of the standard Legendrian unknot in $(S^3,\xist)$, which bounds a Lagrangian disk in $D^4$ equipped with its (unique) Stein structure.
 
 The standard unknot has $\tb$-number equal to $-1$ and rotation number equal to zero, while the vectors $L$ and $\mathbf V$ appearing in the statement of Proposition \ref{teo:AC} are easily determined by the contact surgery presentations. 

\begin{figure}[t]
 \centering
  \def\svgwidth{0.6\textwidth}
\begingroup%
  \makeatletter%
  \providecommand\color[2][]{%
    \errmessage{(Inkscape) Color is used for the text in Inkscape, but the package 'color.sty' is not loaded}%
    \renewcommand\color[2][]{}%
  }%
  \providecommand\transparent[1]{%
    \errmessage{(Inkscape) Transparency is used (non-zero) for the text in Inkscape, but the package 'transparent.sty' is not loaded}%
    \renewcommand\transparent[1]{}%
  }%
  \providecommand\rotatebox[2]{#2}%
  \newcommand*\fsize{\dimexpr\f@size pt\relax}%
  \newcommand*\lineheight[1]{\fontsize{\fsize}{#1\fsize}\selectfont}%
  \ifx\svgwidth\undefined%
    \setlength{\unitlength}{1695.30721349bp}%
    \ifx\svgscale\undefined%
      \relax%
    \else%
      \setlength{\unitlength}{\unitlength * \real{\svgscale}}%
    \fi%
  \else%
    \setlength{\unitlength}{\svgwidth}%
  \fi%
  \global\let\svgwidth\undefined%
  \global\let\svgscale\undefined%
  \makeatother%
  \begin{picture}(1,0.3824645)%
    \lineheight{1}%
    \setlength\tabcolsep{0pt}%
    \put(0,0){\includegraphics[width=\unitlength,page=1]{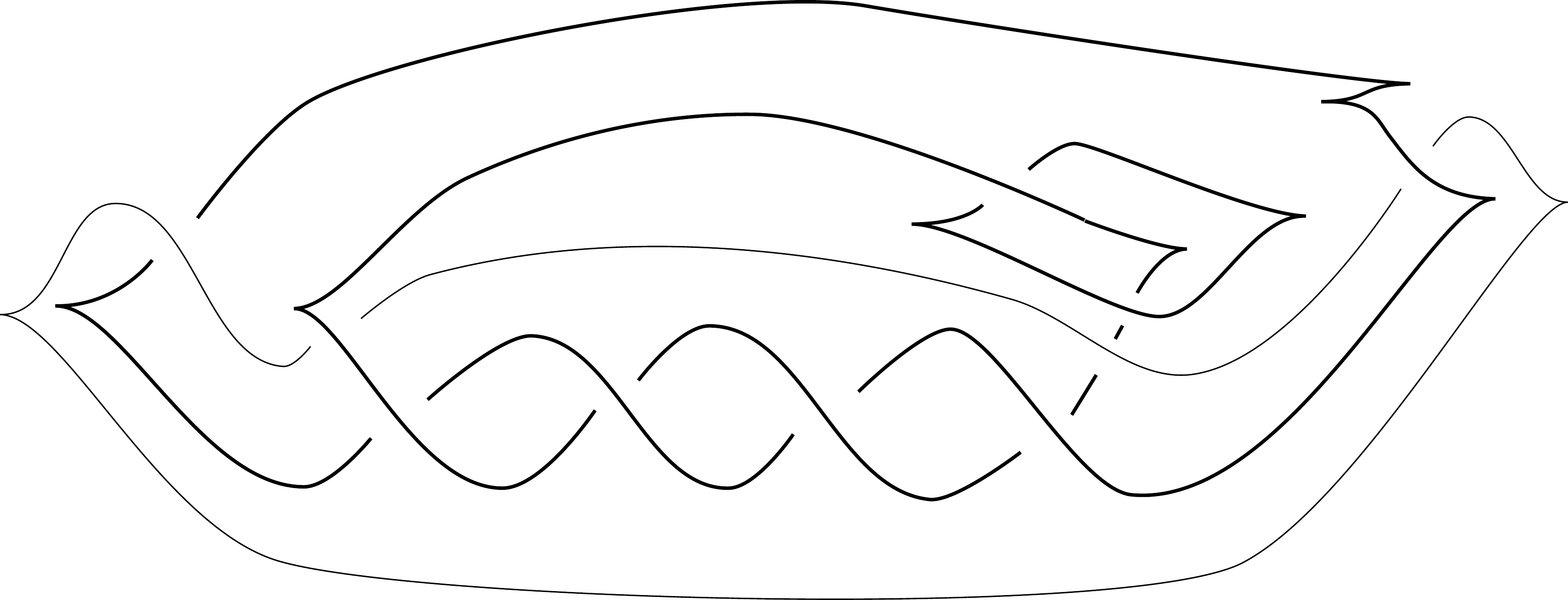}}%
    \put(0.90712044,0.10135155){\color[rgb]{0,0,0}\makebox(0,0)[lt]{\lineheight{1.25}\smash{\begin{tabular}[t]{l}$K_3$\end{tabular}}}}%
  \end{picture}%
\endgroup%
        \end{figure}
     \begin{figure}[t]
 \centering   
  \def\svgwidth{0.3\textwidth}
\begingroup%
  \makeatletter%
  \providecommand\color[2][]{%
    \errmessage{(Inkscape) Color is used for the text in Inkscape, but the package 'color.sty' is not loaded}%
    \renewcommand\color[2][]{}%
  }%
  \providecommand\transparent[1]{%
    \errmessage{(Inkscape) Transparency is used (non-zero) for the text in Inkscape, but the package 'transparent.sty' is not loaded}%
    \renewcommand\transparent[1]{}%
  }%
  \providecommand\rotatebox[2]{#2}%
  \newcommand*\fsize{\dimexpr\f@size pt\relax}%
  \newcommand*\lineheight[1]{\fontsize{\fsize}{#1\fsize}\selectfont}%
  \ifx\svgwidth\undefined%
    \setlength{\unitlength}{253.82315826bp}%
    \ifx\svgscale\undefined%
      \relax%
    \else%
      \setlength{\unitlength}{\unitlength * \real{\svgscale}}%
    \fi%
  \else%
    \setlength{\unitlength}{\svgwidth}%
  \fi%
  \global\let\svgwidth\undefined%
  \global\let\svgscale\undefined%
  \makeatother%
  \begin{picture}(1,0.85098618)%
    \lineheight{1}%
    \setlength\tabcolsep{0pt}%
    \put(0,0){\includegraphics[width=\unitlength,page=1]{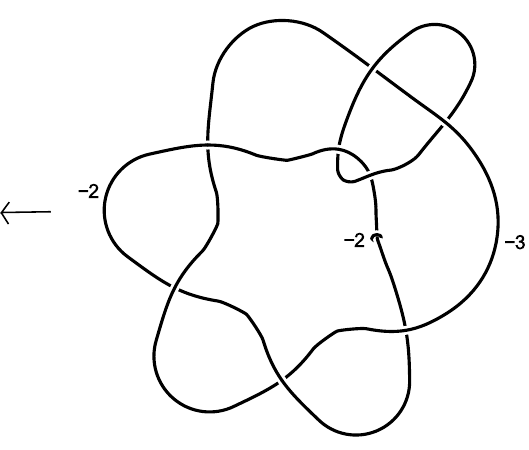}}%
    \put(0.88769399,0.63453992){\color[rgb]{0,0,0}\makebox(0,0)[lt]{\lineheight{1.25}\smash{\begin{tabular}[t]{l}$K_3$\end{tabular}}}}%
  \end{picture}%
\endgroup%
\vspace{0.5cm}
  \def\svgwidth{0.3\textwidth}
\begingroup%
  \makeatletter%
  \providecommand\color[2][]{%
    \errmessage{(Inkscape) Color is used for the text in Inkscape, but the package 'color.sty' is not loaded}%
    \renewcommand\color[2][]{}%
  }%
  \providecommand\transparent[1]{%
    \errmessage{(Inkscape) Transparency is used (non-zero) for the text in Inkscape, but the package 'transparent.sty' is not loaded}%
    \renewcommand\transparent[1]{}%
  }%
  \providecommand\rotatebox[2]{#2}%
  \newcommand*\fsize{\dimexpr\f@size pt\relax}%
  \newcommand*\lineheight[1]{\fontsize{\fsize}{#1\fsize}\selectfont}%
  \ifx\svgwidth\undefined%
    \setlength{\unitlength}{243.12890625bp}%
    \ifx\svgscale\undefined%
      \relax%
    \else%
      \setlength{\unitlength}{\unitlength * \real{\svgscale}}%
    \fi%
  \else%
    \setlength{\unitlength}{\svgwidth}%
  \fi%
  \global\let\svgwidth\undefined%
  \global\let\svgscale\undefined%
  \makeatother%
  \begin{picture}(1,0.8884176)%
    \lineheight{1}%
    \setlength\tabcolsep{0pt}%
    \put(0,0){\includegraphics[width=\unitlength,page=1]{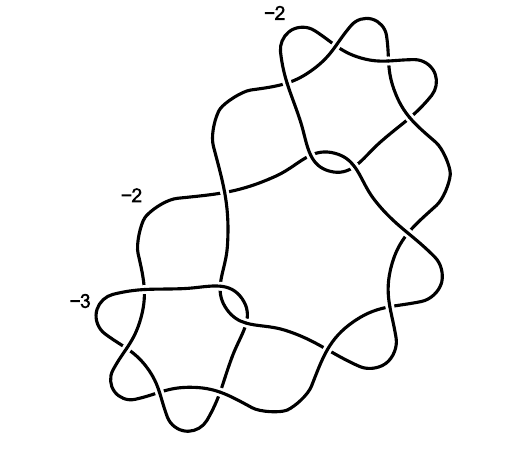}}%
    \put(0.90216605,0.53563008){\color[rgb]{0,0,0}\makebox(0,0)[lt]{\lineheight{1.25}\smash{\begin{tabular}[t]{l}$K_3$\end{tabular}}}}%
    \put(0,0){\includegraphics[width=\unitlength,page=2]{3-3.pdf}}%
  \end{picture}%
\endgroup%
    
  \def\svgwidth{0.27\textwidth}
\begingroup%
  \makeatletter%
  \providecommand\color[2][]{%
    \errmessage{(Inkscape) Color is used for the text in Inkscape, but the package 'color.sty' is not loaded}%
    \renewcommand\color[2][]{}%
  }%
  \providecommand\transparent[1]{%
    \errmessage{(Inkscape) Transparency is used (non-zero) for the text in Inkscape, but the package 'transparent.sty' is not loaded}%
    \renewcommand\transparent[1]{}%
  }%
  \providecommand\rotatebox[2]{#2}%
  \newcommand*\fsize{\dimexpr\f@size pt\relax}%
  \newcommand*\lineheight[1]{\fontsize{\fsize}{#1\fsize}\selectfont}%
  \ifx\svgwidth\undefined%
    \setlength{\unitlength}{213.49021912bp}%
    \ifx\svgscale\undefined%
      \relax%
    \else%
      \setlength{\unitlength}{\unitlength * \real{\svgscale}}%
    \fi%
  \else%
    \setlength{\unitlength}{\svgwidth}%
  \fi%
  \global\let\svgwidth\undefined%
  \global\let\svgscale\undefined%
  \makeatother%
  \begin{picture}(1,1.01175595)%
    \lineheight{1}%
    \setlength\tabcolsep{0pt}%
    \put(0,0){\includegraphics[width=\unitlength,page=1]{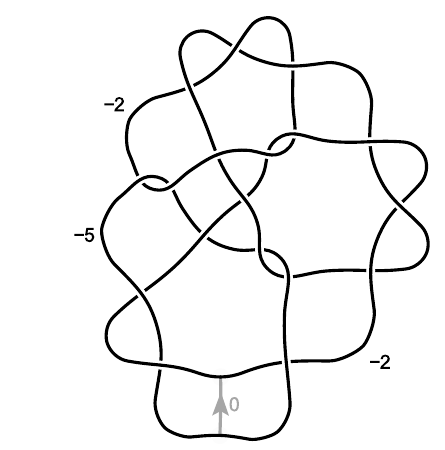}}%
    \put(0.82560764,0.85842786){\color[rgb]{0,0,0}\makebox(0,0)[lt]{\lineheight{1.25}\smash{\begin{tabular}[t]{l}$K_3$\end{tabular}}}}%
    \put(0,0){\includegraphics[width=\unitlength,page=2]{3-4.pdf}}%
  \end{picture}%
\endgroup%
      
\end{figure}
\begin{figure}[t]
 \centering
 \def\svgwidth{0.35\textwidth}
\begingroup%
  \makeatletter%
  \providecommand\color[2][]{%
    \errmessage{(Inkscape) Color is used for the text in Inkscape, but the package 'color.sty' is not loaded}%
    \renewcommand\color[2][]{}%
  }%
  \providecommand\transparent[1]{%
    \errmessage{(Inkscape) Transparency is used (non-zero) for the text in Inkscape, but the package 'transparent.sty' is not loaded}%
    \renewcommand\transparent[1]{}%
  }%
  \providecommand\rotatebox[2]{#2}%
  \newcommand*\fsize{\dimexpr\f@size pt\relax}%
  \newcommand*\lineheight[1]{\fontsize{\fsize}{#1\fsize}\selectfont}%
  \ifx\svgwidth\undefined%
    \setlength{\unitlength}{287.86956024bp}%
    \ifx\svgscale\undefined%
      \relax%
    \else%
      \setlength{\unitlength}{\unitlength * \real{\svgscale}}%
    \fi%
  \else%
    \setlength{\unitlength}{\svgwidth}%
  \fi%
  \global\let\svgwidth\undefined%
  \global\let\svgscale\undefined%
  \makeatother%
  \begin{picture}(1,0.72168302)%
    \lineheight{1}%
    \setlength\tabcolsep{0pt}%
    \put(0,0){\includegraphics[width=\unitlength,page=1]{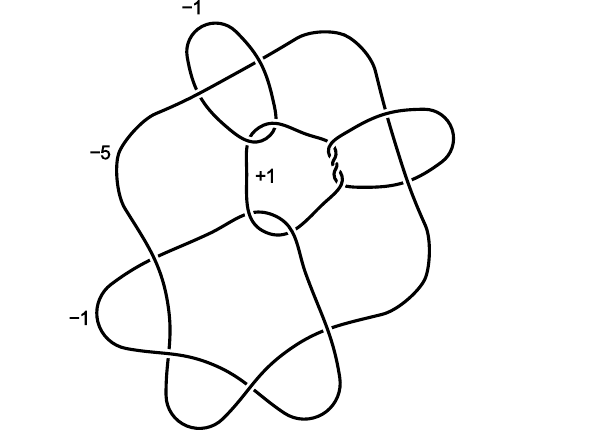}}%
    \put(0.68851303,0.56208565){\color[rgb]{0,0,0}\makebox(0,0)[lt]{\lineheight{1.25}\smash{\begin{tabular}[t]{l}$K_3\cong\gamma_+'$\end{tabular}}}}%
    \put(0,0){\includegraphics[width=\unitlength,page=2]{3-5.pdf}}%
  \end{picture}%
\endgroup%

 \def\svgwidth{0.25\textwidth}
\begingroup%
  \makeatletter%
  \providecommand\color[2][]{%
    \errmessage{(Inkscape) Color is used for the text in Inkscape, but the package 'color.sty' is not loaded}%
    \renewcommand\color[2][]{}%
  }%
  \providecommand\transparent[1]{%
    \errmessage{(Inkscape) Transparency is used (non-zero) for the text in Inkscape, but the package 'transparent.sty' is not loaded}%
    \renewcommand\transparent[1]{}%
  }%
  \providecommand\rotatebox[2]{#2}%
  \newcommand*\fsize{\dimexpr\f@size pt\relax}%
  \newcommand*\lineheight[1]{\fontsize{\fsize}{#1\fsize}\selectfont}%
  \ifx\svgwidth\undefined%
    \setlength{\unitlength}{212.206604bp}%
    \ifx\svgscale\undefined%
      \relax%
    \else%
      \setlength{\unitlength}{\unitlength * \real{\svgscale}}%
    \fi%
  \else%
    \setlength{\unitlength}{\svgwidth}%
  \fi%
  \global\let\svgwidth\undefined%
  \global\let\svgscale\undefined%
  \makeatother%
  \begin{picture}(1,1.01787596)%
    \lineheight{1}%
    \setlength\tabcolsep{0pt}%
    \put(0,0){\includegraphics[width=\unitlength,page=1]{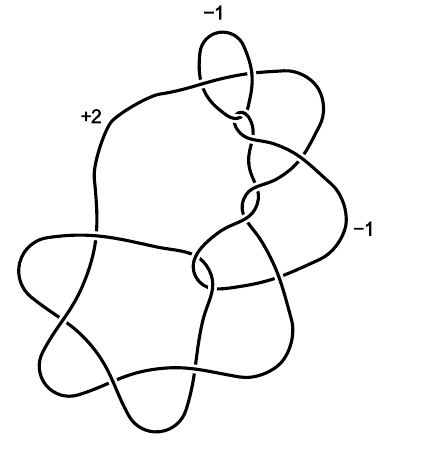}}%
    \put(0.04755872,0.50978077){\color[rgb]{0,0,0}\makebox(0,0)[lt]{\lineheight{1.25}\smash{\begin{tabular}[t]{l}$\gamma_+'$\end{tabular}}}}%
    \put(0,0){\includegraphics[width=\unitlength,page=2]{3-6.pdf}}%
  \end{picture}%
\endgroup%
 
 \def\svgwidth{0.35\textwidth}
\begingroup%
  \makeatletter%
  \providecommand\color[2][]{%
    \errmessage{(Inkscape) Color is used for the text in Inkscape, but the package 'color.sty' is not loaded}%
    \renewcommand\color[2][]{}%
  }%
  \providecommand\transparent[1]{%
    \errmessage{(Inkscape) Transparency is used (non-zero) for the text in Inkscape, but the package 'transparent.sty' is not loaded}%
    \renewcommand\transparent[1]{}%
  }%
  \providecommand\rotatebox[2]{#2}%
  \newcommand*\fsize{\dimexpr\f@size pt\relax}%
  \newcommand*\lineheight[1]{\fontsize{\fsize}{#1\fsize}\selectfont}%
  \ifx\svgwidth\undefined%
    \setlength{\unitlength}{328.61048126bp}%
    \ifx\svgscale\undefined%
      \relax%
    \else%
      \setlength{\unitlength}{\unitlength * \real{\svgscale}}%
    \fi%
  \else%
    \setlength{\unitlength}{\svgwidth}%
  \fi%
  \global\let\svgwidth\undefined%
  \global\let\svgscale\undefined%
  \makeatother%
  \begin{picture}(1,0.65731318)%
    \lineheight{1}%
    \setlength\tabcolsep{0pt}%
    \put(0,0){\includegraphics[width=\unitlength,page=1]{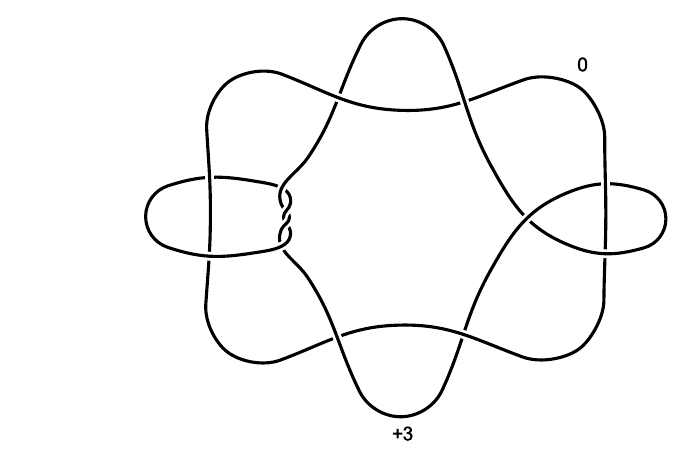}}%
    \put(-0.00161324,0.23270923){\color[rgb]{0,0,0}\makebox(0,0)[lt]{\lineheight{1.25}\smash{\begin{tabular}[t]{l}$\gamma_+'\cong\gamma_+$\end{tabular}}}}%
  \end{picture}%
\endgroup%
        
\end{figure}
\begin{figure}[t]
 \centering
 \def\svgwidth{0.302\textwidth}
\begingroup%
  \makeatletter%
  \providecommand\color[2][]{%
    \errmessage{(Inkscape) Color is used for the text in Inkscape, but the package 'color.sty' is not loaded}%
    \renewcommand\color[2][]{}%
  }%
  \providecommand\transparent[1]{%
    \errmessage{(Inkscape) Transparency is used (non-zero) for the text in Inkscape, but the package 'transparent.sty' is not loaded}%
    \renewcommand\transparent[1]{}%
  }%
  \providecommand\rotatebox[2]{#2}%
  \newcommand*\fsize{\dimexpr\f@size pt\relax}%
  \newcommand*\lineheight[1]{\fontsize{\fsize}{#1\fsize}\selectfont}%
  \ifx\svgwidth\undefined%
    \setlength{\unitlength}{257.89860535bp}%
    \ifx\svgscale\undefined%
      \relax%
    \else%
      \setlength{\unitlength}{\unitlength * \real{\svgscale}}%
    \fi%
  \else%
    \setlength{\unitlength}{\svgwidth}%
  \fi%
  \global\let\svgwidth\undefined%
  \global\let\svgscale\undefined%
  \makeatother%
  \begin{picture}(1,0.83753846)%
    \lineheight{1}%
    \setlength\tabcolsep{0pt}%
    \put(0,0){\includegraphics[width=\unitlength,page=1]{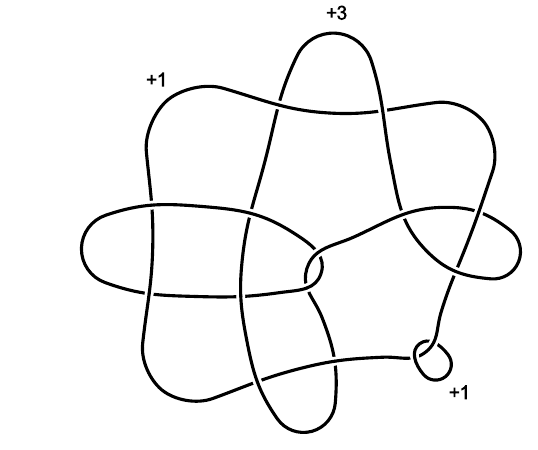}}%
    \put(0.14828999,0.49517565){\color[rgb]{0,0,0}\makebox(0,0)[lt]{\lineheight{1.25}\smash{\begin{tabular}[t]{l}$\gamma_+$\end{tabular}}}}%
    \put(0,0){\includegraphics[width=\unitlength,page=2]{3-8.pdf}}%
  \end{picture}%
\endgroup%

 \def\svgwidth{0.302\textwidth}
\begingroup%
  \makeatletter%
  \providecommand\color[2][]{%
    \errmessage{(Inkscape) Color is used for the text in Inkscape, but the package 'color.sty' is not loaded}%
    \renewcommand\color[2][]{}%
  }%
  \providecommand\transparent[1]{%
    \errmessage{(Inkscape) Transparency is used (non-zero) for the text in Inkscape, but the package 'transparent.sty' is not loaded}%
    \renewcommand\transparent[1]{}%
  }%
  \providecommand\rotatebox[2]{#2}%
  \newcommand*\fsize{\dimexpr\f@size pt\relax}%
  \newcommand*\lineheight[1]{\fontsize{\fsize}{#1\fsize}\selectfont}%
  \ifx\svgwidth\undefined%
    \setlength{\unitlength}{267.85668182bp}%
    \ifx\svgscale\undefined%
      \relax%
    \else%
      \setlength{\unitlength}{\unitlength * \real{\svgscale}}%
    \fi%
  \else%
    \setlength{\unitlength}{\svgwidth}%
  \fi%
  \global\let\svgwidth\undefined%
  \global\let\svgscale\undefined%
  \makeatother%
  \begin{picture}(1,0.80640139)%
    \lineheight{1}%
    \setlength\tabcolsep{0pt}%
    \put(0,0){\includegraphics[width=\unitlength,page=1]{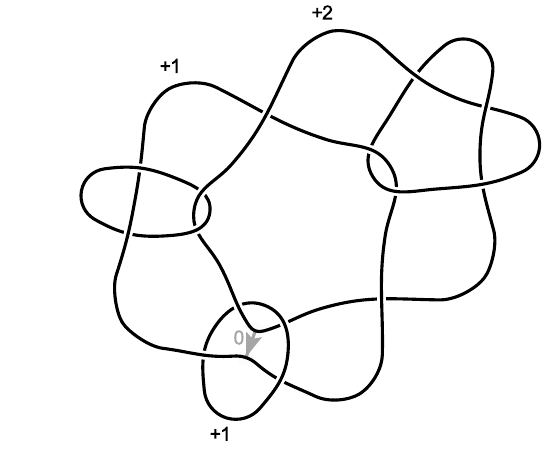}}%
    \put(0.15128468,0.54679774){\color[rgb]{0,0,0}\makebox(0,0)[lt]{\lineheight{1.25}\smash{\begin{tabular}[t]{l}$\gamma_+$\end{tabular}}}}%
    \put(0,0){\includegraphics[width=\unitlength,page=2]{3-9.pdf}}%
  \end{picture}%
\endgroup%
\hspace{2cm}    
 \def\svgwidth{0.27\textwidth}
\begingroup%
  \makeatletter%
  \providecommand\color[2][]{%
    \errmessage{(Inkscape) Color is used for the text in Inkscape, but the package 'color.sty' is not loaded}%
    \renewcommand\color[2][]{}%
  }%
  \providecommand\transparent[1]{%
    \errmessage{(Inkscape) Transparency is used (non-zero) for the text in Inkscape, but the package 'transparent.sty' is not loaded}%
    \renewcommand\transparent[1]{}%
  }%
  \providecommand\rotatebox[2]{#2}%
  \newcommand*\fsize{\dimexpr\f@size pt\relax}%
  \newcommand*\lineheight[1]{\fontsize{\fsize}{#1\fsize}\selectfont}%
  \ifx\svgwidth\undefined%
    \setlength{\unitlength}{297.64112091bp}%
    \ifx\svgscale\undefined%
      \relax%
    \else%
      \setlength{\unitlength}{\unitlength * \real{\svgscale}}%
    \fi%
  \else%
    \setlength{\unitlength}{\svgwidth}%
  \fi%
  \global\let\svgwidth\undefined%
  \global\let\svgscale\undefined%
  \makeatother%
  \begin{picture}(1,0.72570618)%
    \lineheight{1}%
    \setlength\tabcolsep{0pt}%
    \put(0,0){\includegraphics[width=\unitlength,page=1]{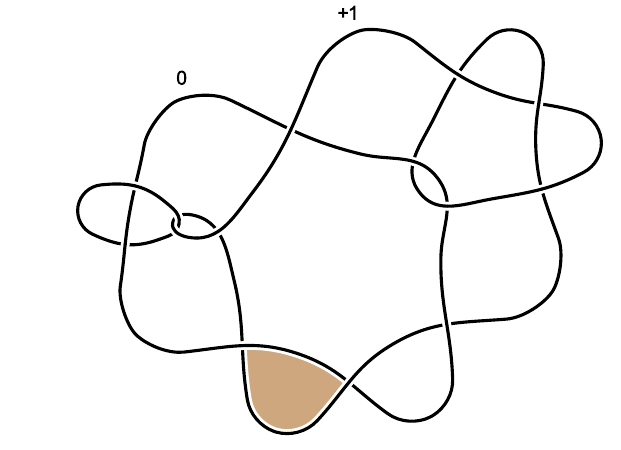}}%
    \put(0.12297441,0.46415827){\color[rgb]{0,0,0}\makebox(0,0)[lt]{\lineheight{1.25}\smash{\begin{tabular}[t]{l}$\gamma_+$\end{tabular}}}}%
    \put(0,0){\includegraphics[width=\unitlength,page=2]{3-10.pdf}}%
  \end{picture}%
\endgroup%
       
\end{figure} \clearpage\noindent
\begin{figure}[t]
 \centering
  \def\svgwidth{0.24\textwidth}
\begingroup%
  \makeatletter%
  \providecommand\color[2][]{%
    \errmessage{(Inkscape) Color is used for the text in Inkscape, but the package 'color.sty' is not loaded}%
    \renewcommand\color[2][]{}%
  }%
  \providecommand\transparent[1]{%
    \errmessage{(Inkscape) Transparency is used (non-zero) for the text in Inkscape, but the package 'transparent.sty' is not loaded}%
    \renewcommand\transparent[1]{}%
  }%
  \providecommand\rotatebox[2]{#2}%
  \newcommand*\fsize{\dimexpr\f@size pt\relax}%
  \newcommand*\lineheight[1]{\fontsize{\fsize}{#1\fsize}\selectfont}%
  \ifx\svgwidth\undefined%
    \setlength{\unitlength}{279.06058502bp}%
    \ifx\svgscale\undefined%
      \relax%
    \else%
      \setlength{\unitlength}{\unitlength * \real{\svgscale}}%
    \fi%
  \else%
    \setlength{\unitlength}{\svgwidth}%
  \fi%
  \global\let\svgwidth\undefined%
  \global\let\svgscale\undefined%
  \makeatother%
  \begin{picture}(1,0.77402547)%
    \lineheight{1}%
    \setlength\tabcolsep{0pt}%
    \put(0,0){\includegraphics[width=\unitlength,page=1]{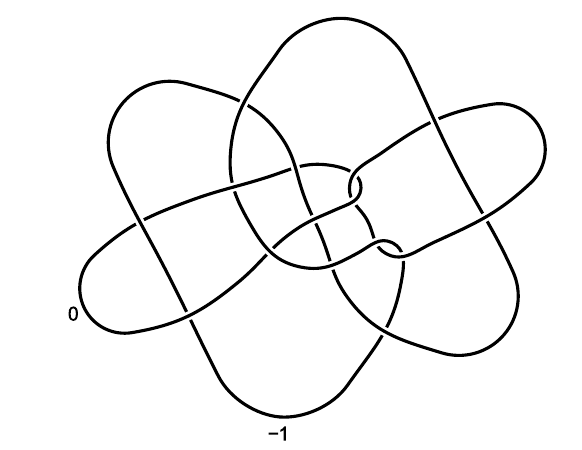}}%
    \put(0.80132368,0.62199656){\color[rgb]{0,0,0}\makebox(0,0)[lt]{\lineheight{1.25}\smash{\begin{tabular}[t]{l}$\gamma_+$\end{tabular}}}}%
    \put(0,0){\includegraphics[width=\unitlength,page=2]{3a2.pdf}}%
  \end{picture}%
\endgroup%

 \def\svgwidth{0.21\textwidth}
\begingroup%
  \makeatletter%
  \providecommand\color[2][]{%
    \errmessage{(Inkscape) Color is used for the text in Inkscape, but the package 'color.sty' is not loaded}%
    \renewcommand\color[2][]{}%
  }%
  \providecommand\transparent[1]{%
    \errmessage{(Inkscape) Transparency is used (non-zero) for the text in Inkscape, but the package 'transparent.sty' is not loaded}%
    \renewcommand\transparent[1]{}%
  }%
  \providecommand\rotatebox[2]{#2}%
  \newcommand*\fsize{\dimexpr\f@size pt\relax}%
  \newcommand*\lineheight[1]{\fontsize{\fsize}{#1\fsize}\selectfont}%
  \ifx\svgwidth\undefined%
    \setlength{\unitlength}{233.27861023bp}%
    \ifx\svgscale\undefined%
      \relax%
    \else%
      \setlength{\unitlength}{\unitlength * \real{\svgscale}}%
    \fi%
  \else%
    \setlength{\unitlength}{\svgwidth}%
  \fi%
  \global\let\svgwidth\undefined%
  \global\let\svgscale\undefined%
  \makeatother%
  \begin{picture}(1,0.92593144)%
    \lineheight{1}%
    \setlength\tabcolsep{0pt}%
    \put(0,0){\includegraphics[width=\unitlength,page=1]{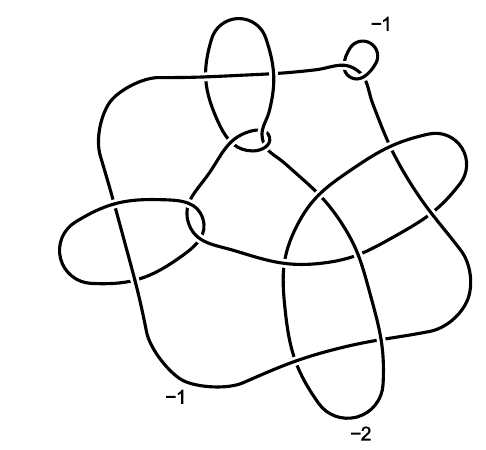}}%
    \put(0.15671876,0.26409994){\color[rgb]{0,0,0}\makebox(0,0)[lt]{\lineheight{1.25}\smash{\begin{tabular}[t]{l}$\gamma_+$\end{tabular}}}}%
    \put(0,0){\includegraphics[width=\unitlength,page=2]{3a3.pdf}}%
  \end{picture}%
\endgroup%

 \def\svgwidth{0.25\textwidth}
\begingroup%
  \makeatletter%
  \providecommand\color[2][]{%
    \errmessage{(Inkscape) Color is used for the text in Inkscape, but the package 'color.sty' is not loaded}%
    \renewcommand\color[2][]{}%
  }%
  \providecommand\transparent[1]{%
    \errmessage{(Inkscape) Transparency is used (non-zero) for the text in Inkscape, but the package 'transparent.sty' is not loaded}%
    \renewcommand\transparent[1]{}%
  }%
  \providecommand\rotatebox[2]{#2}%
  \newcommand*\fsize{\dimexpr\f@size pt\relax}%
  \newcommand*\lineheight[1]{\fontsize{\fsize}{#1\fsize}\selectfont}%
  \ifx\svgwidth\undefined%
    \setlength{\unitlength}{266.2625885bp}%
    \ifx\svgscale\undefined%
      \relax%
    \else%
      \setlength{\unitlength}{\unitlength * \real{\svgscale}}%
    \fi%
  \else%
    \setlength{\unitlength}{\svgwidth}%
  \fi%
  \global\let\svgwidth\undefined%
  \global\let\svgscale\undefined%
  \makeatother%
  \begin{picture}(1,0.76079106)%
    \lineheight{1}%
    \setlength\tabcolsep{0pt}%
    \put(0,0){\includegraphics[width=\unitlength,page=1]{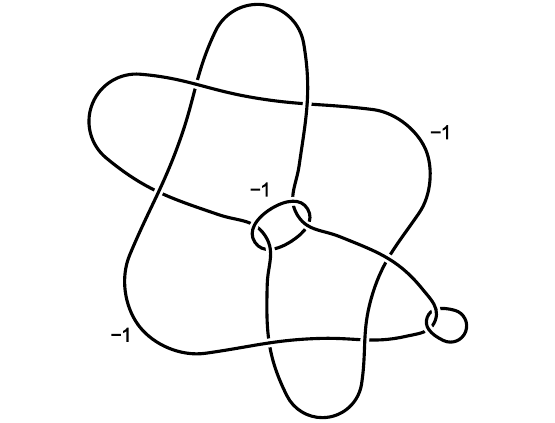}}%
    \put(0.80606887,0.22277645){\color[rgb]{0,0,0}\makebox(0,0)[lt]{\lineheight{1.25}\smash{\begin{tabular}[t]{l}$\gamma_+$\end{tabular}}}}%
    \put(0,0){\includegraphics[width=\unitlength,page=2]{3a4.pdf}}%
  \end{picture}%
\endgroup%

   \def\svgwidth{0.25\textwidth}
\begingroup%
  \makeatletter%
  \providecommand\color[2][]{%
    \errmessage{(Inkscape) Color is used for the text in Inkscape, but the package 'color.sty' is not loaded}%
    \renewcommand\color[2][]{}%
  }%
  \providecommand\transparent[1]{%
    \errmessage{(Inkscape) Transparency is used (non-zero) for the text in Inkscape, but the package 'transparent.sty' is not loaded}%
    \renewcommand\transparent[1]{}%
  }%
  \providecommand\rotatebox[2]{#2}%
  \newcommand*\fsize{\dimexpr\f@size pt\relax}%
  \newcommand*\lineheight[1]{\fontsize{\fsize}{#1\fsize}\selectfont}%
  \ifx\svgwidth\undefined%
    \setlength{\unitlength}{263.1988678bp}%
    \ifx\svgscale\undefined%
      \relax%
    \else%
      \setlength{\unitlength}{\unitlength * \real{\svgscale}}%
    \fi%
  \else%
    \setlength{\unitlength}{\svgwidth}%
  \fi%
  \global\let\svgwidth\undefined%
  \global\let\svgscale\undefined%
  \makeatother%
  \begin{picture}(1,0.82067222)%
    \lineheight{1}%
    \setlength\tabcolsep{0pt}%
    \put(0,0){\includegraphics[width=\unitlength,page=1]{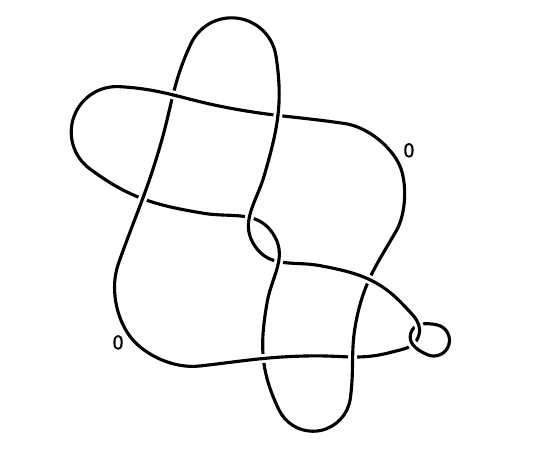}}%
    \put(0.80395388,0.24406605){\color[rgb]{0,0,0}\makebox(0,0)[lt]{\lineheight{1.25}\smash{\begin{tabular}[t]{l}$\gamma_+$\end{tabular}}}}%
    \put(0,0){\includegraphics[width=\unitlength,page=2]{3a5.pdf}}%
  \end{picture}%
\endgroup%

\end{figure}   
\begin{figure}[t]
 \centering
  \def\svgwidth{0.4\textwidth}
\begingroup%
  \makeatletter%
  \providecommand\color[2][]{%
    \errmessage{(Inkscape) Color is used for the text in Inkscape, but the package 'color.sty' is not loaded}%
    \renewcommand\color[2][]{}%
  }%
  \providecommand\transparent[1]{%
    \errmessage{(Inkscape) Transparency is used (non-zero) for the text in Inkscape, but the package 'transparent.sty' is not loaded}%
    \renewcommand\transparent[1]{}%
  }%
  \providecommand\rotatebox[2]{#2}%
  \newcommand*\fsize{\dimexpr\f@size pt\relax}%
  \newcommand*\lineheight[1]{\fontsize{\fsize}{#1\fsize}\selectfont}%
  \ifx\svgwidth\undefined%
    \setlength{\unitlength}{1101.37948104bp}%
    \ifx\svgscale\undefined%
      \relax%
    \else%
      \setlength{\unitlength}{\unitlength * \real{\svgscale}}%
    \fi%
  \else%
    \setlength{\unitlength}{\svgwidth}%
  \fi%
  \global\let\svgwidth\undefined%
  \global\let\svgscale\undefined%
  \makeatother%
  \begin{picture}(1,0.66828161)%
    \lineheight{1}%
    \setlength\tabcolsep{0pt}%
    \put(0,0){\includegraphics[width=\unitlength,page=1]{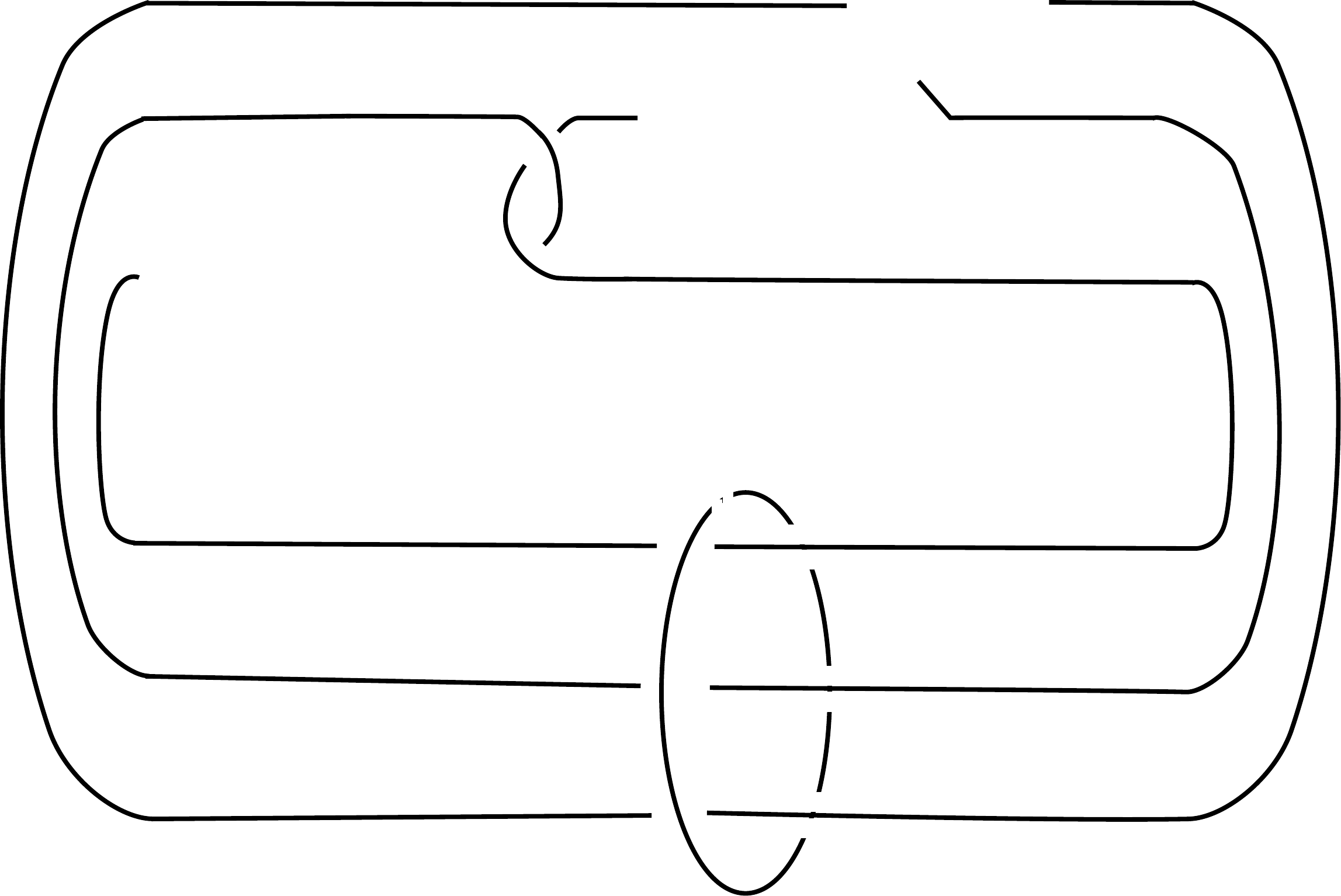}}%
    \put(0.16668633,0.27732651){\color[rgb]{0,0,0}\makebox(0,0)[lt]{\lineheight{1.25}\smash{\begin{tabular}[t]{l}$0$\end{tabular}}}}%
    \put(0.45043592,0.20316221){\color[rgb]{0,0,0}\makebox(0,0)[lt]{\lineheight{1.25}\smash{\begin{tabular}[t]{l}$0$\end{tabular}}}}%
    \put(0.57740686,0.35026791){\color[rgb]{0,0,0}\makebox(0,0)[lt]{\lineheight{1.25}\smash{\begin{tabular}[t]{l}$\gamma_+$\end{tabular}}}}%
    \put(0,0){\includegraphics[width=\unitlength,page=2]{3a1.pdf}}%
  \end{picture}%
\endgroup%

 \caption{\smaller[1]{The self-diffeomorphism $F$ of $Y^-(3)$ mapping $K_3$ to $\gamma_-$. There are four diffeomorphisms $f_A,f_B,f_C$ and $\Phi_{2,1}$ involved: we have that $f_A(\gamma_+')=K_3,$ $f_B(\gamma_+')=\gamma_+',$ $f_C(\gamma_+)=\gamma_+'$ while $\Phi_{2,1}(\gamma_-)=\gamma_+$ is the identification $Y^-(2,1)\cong-Y^+(0,0)$ established in Property 2) of Theorem \ref{teo:AK}. The diffeomorphism we want is $F=\Phi_{2,1}^{-1}\circ f_C^{-1}\circ f_B\circ f_A^{-1}$.}}
\label{Diffeo1}
\end{figure} 

 We start from the case of $m=3$, corresponding to $\Sigma(2,5,7)$:
 \[\tb_{\xist}(\bigcirc)-L^T\Lambda^{-1}L=-1 
 -\left(\begin{matrix}
  1 & -1 & 0 
 \end{matrix}\right)\left(\begin{matrix}
   -3 & -4 & -2\\
    -4 & -6 & -3 \\
    -2 & -3 & -2
 \end{matrix}\right)\left(\begin{matrix}
  1 \\ -1 \\ 0 
 \end{matrix}\right)
 =-1+1=0\] and \[L^T\Lambda^{-1}\mathbf V=\left(\begin{matrix}
  1 & -1 & 0 
 \end{matrix}\right)\left(\begin{matrix}
   -3 & -4 & -2\\
    -4 & -6 & -3 \\
    -2 & -3 & -2
 \end{matrix}\right)\left(\begin{matrix}
  -1 \\ 0 \\ 0 
 \end{matrix}\right)=-1\:;\] hence, from Proposition \ref{teo:AC} we obtain \[\tau_\xi(K_3)=\dfrac{0+(-1)+1}{2}=0\hspace{0.5cm}\text{ and }\hspace{0.5cm}\tau_{\overline\xi}(K_3)=\dfrac{0-(-1)+1}{2}=1\:.\]

\begin{figure}[t]
 \centering
 \def\svgwidth{0.426\textwidth}
\begingroup%
  \makeatletter%
  \providecommand\color[2][]{%
    \errmessage{(Inkscape) Color is used for the text in Inkscape, but the package 'color.sty' is not loaded}%
    \renewcommand\color[2][]{}%
  }%
  \providecommand\transparent[1]{%
    \errmessage{(Inkscape) Transparency is used (non-zero) for the text in Inkscape, but the package 'transparent.sty' is not loaded}%
    \renewcommand\transparent[1]{}%
  }%
  \providecommand\rotatebox[2]{#2}%
  \newcommand*\fsize{\dimexpr\f@size pt\relax}%
  \newcommand*\lineheight[1]{\fontsize{\fsize}{#1\fsize}\selectfont}%
  \ifx\svgwidth\undefined%
    \setlength{\unitlength}{1678.78291093bp}%
    \ifx\svgscale\undefined%
      \relax%
    \else%
      \setlength{\unitlength}{\unitlength * \real{\svgscale}}%
    \fi%
  \else%
    \setlength{\unitlength}{\svgwidth}%
  \fi%
  \global\let\svgwidth\undefined%
  \global\let\svgscale\undefined%
  \makeatother%
  \begin{picture}(1,0.44061252)%
    \lineheight{1}%
    \setlength\tabcolsep{0pt}%
    \put(0,0){\includegraphics[width=\unitlength,page=1]{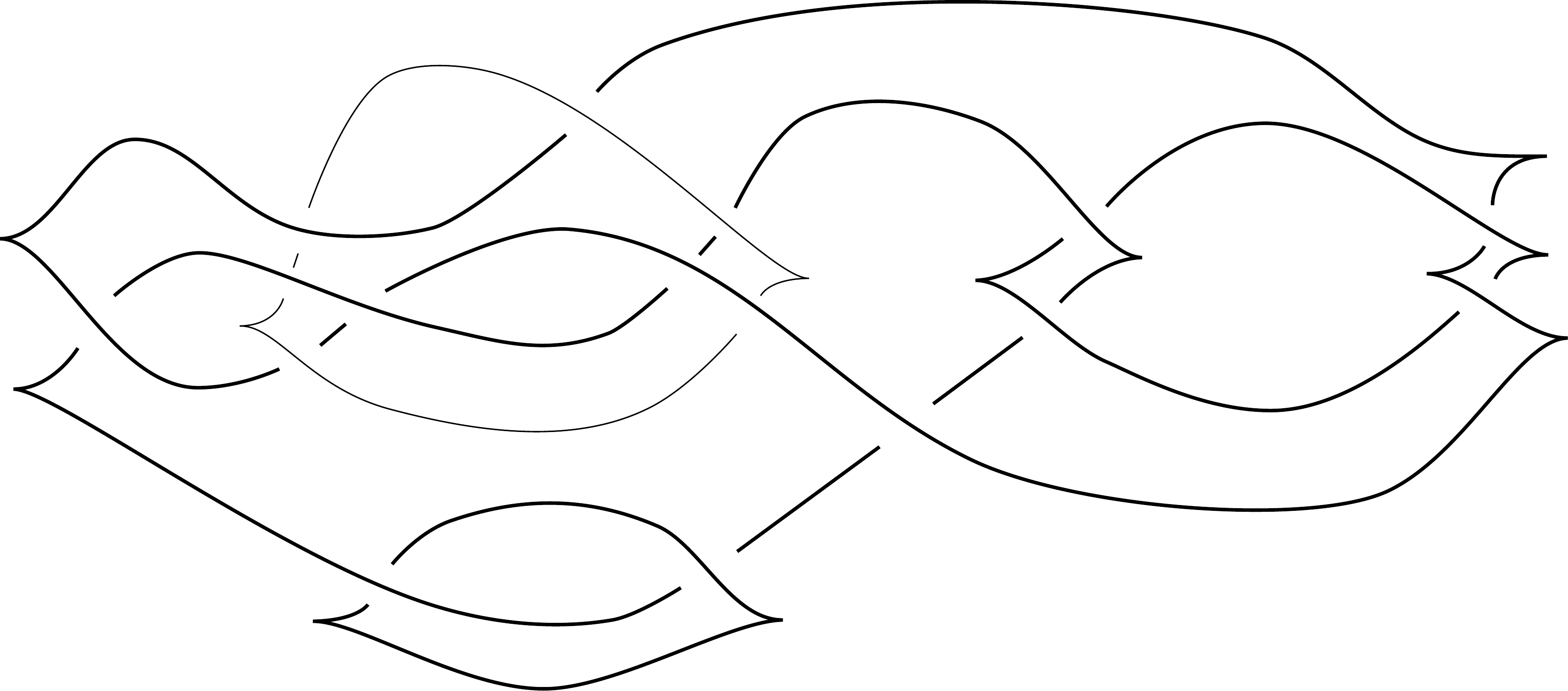}}%
    \put(0.22955183,0.41291554){\color[rgb]{0,0,0}\makebox(0,0)[lt]{\lineheight{1.25}\smash{\begin{tabular}[t]{l}$K_4$\end{tabular}}}}%
  \end{picture}%
\endgroup%
\hspace{6cm}
 \def\svgwidth{0.25\textwidth}
\begingroup%
  \makeatletter%
  \providecommand\color[2][]{%
    \errmessage{(Inkscape) Color is used for the text in Inkscape, but the package 'color.sty' is not loaded}%
    \renewcommand\color[2][]{}%
  }%
  \providecommand\transparent[1]{%
    \errmessage{(Inkscape) Transparency is used (non-zero) for the text in Inkscape, but the package 'transparent.sty' is not loaded}%
    \renewcommand\transparent[1]{}%
  }%
  \providecommand\rotatebox[2]{#2}%
  \newcommand*\fsize{\dimexpr\f@size pt\relax}%
  \newcommand*\lineheight[1]{\fontsize{\fsize}{#1\fsize}\selectfont}%
  \ifx\svgwidth\undefined%
    \setlength{\unitlength}{213.14373779bp}%
    \ifx\svgscale\undefined%
      \relax%
    \else%
      \setlength{\unitlength}{\unitlength * \real{\svgscale}}%
    \fi%
  \else%
    \setlength{\unitlength}{\svgwidth}%
  \fi%
  \global\let\svgwidth\undefined%
  \global\let\svgscale\undefined%
  \makeatother%
  \begin{picture}(1,1.01340064)%
    \lineheight{1}%
    \setlength\tabcolsep{0pt}%
    \put(0,0){\includegraphics[width=\unitlength,page=1]{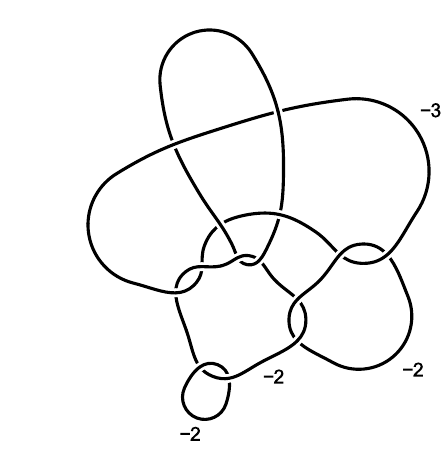}}%
    \put(0.61217663,0.86247664){\color[rgb]{0,0,0}\makebox(0,0)[lt]{\lineheight{1.25}\smash{\begin{tabular}[t]{l}$K_4$\end{tabular}}}}%
    \put(0,0){\includegraphics[width=\unitlength,page=2]{4-2.pdf}}%
  \end{picture}%
\endgroup%

 \def\svgwidth{0.32\textwidth}
\begingroup%
  \makeatletter%
  \providecommand\color[2][]{%
    \errmessage{(Inkscape) Color is used for the text in Inkscape, but the package 'color.sty' is not loaded}%
    \renewcommand\color[2][]{}%
  }%
  \providecommand\transparent[1]{%
    \errmessage{(Inkscape) Transparency is used (non-zero) for the text in Inkscape, but the package 'transparent.sty' is not loaded}%
    \renewcommand\transparent[1]{}%
  }%
  \providecommand\rotatebox[2]{#2}%
  \newcommand*\fsize{\dimexpr\f@size pt\relax}%
  \newcommand*\lineheight[1]{\fontsize{\fsize}{#1\fsize}\selectfont}%
  \ifx\svgwidth\undefined%
    \setlength{\unitlength}{265.08867645bp}%
    \ifx\svgscale\undefined%
      \relax%
    \else%
      \setlength{\unitlength}{\unitlength * \real{\svgscale}}%
    \fi%
  \else%
    \setlength{\unitlength}{\svgwidth}%
  \fi%
  \global\let\svgwidth\undefined%
  \global\let\svgscale\undefined%
  \makeatother%
  \begin{picture}(1,0.81482168)%
    \lineheight{1}%
    \setlength\tabcolsep{0pt}%
    \put(0,0){\includegraphics[width=\unitlength,page=1]{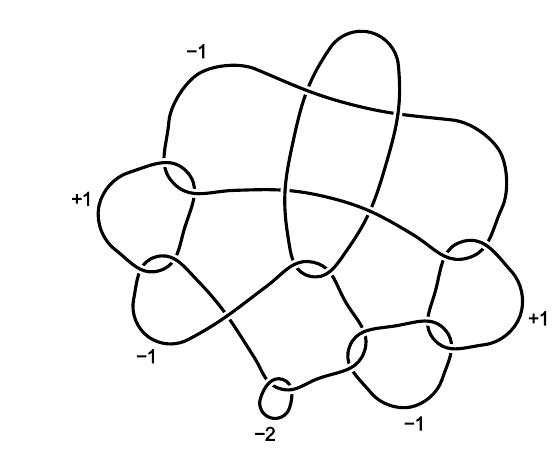}}%
    \put(0.73858266,0.68600114){\color[rgb]{0,0,0}\makebox(0,0)[lt]{\lineheight{1.25}\smash{\begin{tabular}[t]{l}$K_4$\end{tabular}}}}%
    \put(0,0){\includegraphics[width=\unitlength,page=2]{4-3.pdf}}%
  \end{picture}%
\endgroup%
    
 \def\svgwidth{0.25\textwidth}
\begingroup%
  \makeatletter%
  \providecommand\color[2][]{%
    \errmessage{(Inkscape) Color is used for the text in Inkscape, but the package 'color.sty' is not loaded}%
    \renewcommand\color[2][]{}%
  }%
  \providecommand\transparent[1]{%
    \errmessage{(Inkscape) Transparency is used (non-zero) for the text in Inkscape, but the package 'transparent.sty' is not loaded}%
    \renewcommand\transparent[1]{}%
  }%
  \providecommand\rotatebox[2]{#2}%
  \newcommand*\fsize{\dimexpr\f@size pt\relax}%
  \newcommand*\lineheight[1]{\fontsize{\fsize}{#1\fsize}\selectfont}%
  \ifx\svgwidth\undefined%
    \setlength{\unitlength}{211.77163696bp}%
    \ifx\svgscale\undefined%
      \relax%
    \else%
      \setlength{\unitlength}{\unitlength * \real{\svgscale}}%
    \fi%
  \else%
    \setlength{\unitlength}{\svgwidth}%
  \fi%
  \global\let\svgwidth\undefined%
  \global\let\svgscale\undefined%
  \makeatother%
  \begin{picture}(1,1.01996662)%
    \lineheight{1}%
    \setlength\tabcolsep{0pt}%
    \put(0,0){\includegraphics[width=\unitlength,page=1]{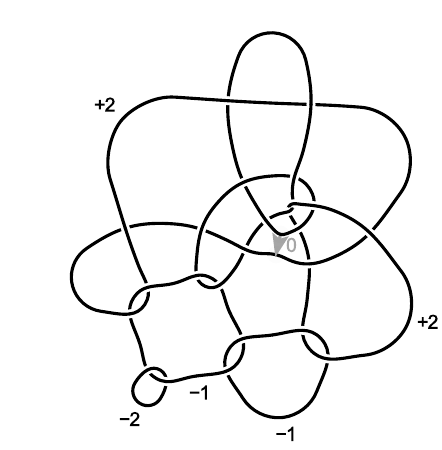}}%
    \put(0.71091201,0.89681755){\color[rgb]{0,0,0}\makebox(0,0)[lt]{\lineheight{1.25}\smash{\begin{tabular}[t]{l}$K_4$\end{tabular}}}}%
    \put(0,0){\includegraphics[width=\unitlength,page=2]{4-4.pdf}}%
  \end{picture}%
\endgroup%
      
\end{figure}
\begin{figure}[t]
 \centering
 \def\svgwidth{0.25\textwidth}
\begingroup%
  \makeatletter%
  \providecommand\color[2][]{%
    \errmessage{(Inkscape) Color is used for the text in Inkscape, but the package 'color.sty' is not loaded}%
    \renewcommand\color[2][]{}%
  }%
  \providecommand\transparent[1]{%
    \errmessage{(Inkscape) Transparency is used (non-zero) for the text in Inkscape, but the package 'transparent.sty' is not loaded}%
    \renewcommand\transparent[1]{}%
  }%
  \providecommand\rotatebox[2]{#2}%
  \newcommand*\fsize{\dimexpr\f@size pt\relax}%
  \newcommand*\lineheight[1]{\fontsize{\fsize}{#1\fsize}\selectfont}%
  \ifx\svgwidth\undefined%
    \setlength{\unitlength}{229.30741882bp}%
    \ifx\svgscale\undefined%
      \relax%
    \else%
      \setlength{\unitlength}{\unitlength * \real{\svgscale}}%
    \fi%
  \else%
    \setlength{\unitlength}{\svgwidth}%
  \fi%
  \global\let\svgwidth\undefined%
  \global\let\svgscale\undefined%
  \makeatother%
  \begin{picture}(1,0.94196691)%
    \lineheight{1}%
    \setlength\tabcolsep{0pt}%
    \put(0,0){\includegraphics[width=\unitlength,page=1]{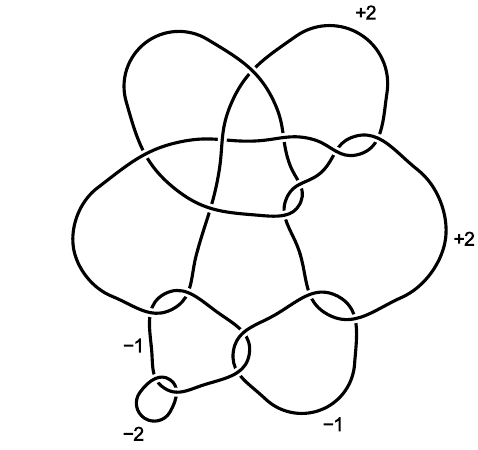}}%
    \put(0.43631915,0.88709157){\color[rgb]{0,0,0}\makebox(0,0)[lt]{\lineheight{1.25}\smash{\begin{tabular}[t]{l}$K_4$\end{tabular}}}}%
    \put(0,0){\includegraphics[width=\unitlength,page=2]{4-5.pdf}}%
  \end{picture}%
\endgroup%

 \def\svgwidth{0.22\textwidth}
\begingroup%
  \makeatletter%
  \providecommand\color[2][]{%
    \errmessage{(Inkscape) Color is used for the text in Inkscape, but the package 'color.sty' is not loaded}%
    \renewcommand\color[2][]{}%
  }%
  \providecommand\transparent[1]{%
    \errmessage{(Inkscape) Transparency is used (non-zero) for the text in Inkscape, but the package 'transparent.sty' is not loaded}%
    \renewcommand\transparent[1]{}%
  }%
  \providecommand\rotatebox[2]{#2}%
  \newcommand*\fsize{\dimexpr\f@size pt\relax}%
  \newcommand*\lineheight[1]{\fontsize{\fsize}{#1\fsize}\selectfont}%
  \ifx\svgwidth\undefined%
    \setlength{\unitlength}{203.94179535bp}%
    \ifx\svgscale\undefined%
      \relax%
    \else%
      \setlength{\unitlength}{\unitlength * \real{\svgscale}}%
    \fi%
  \else%
    \setlength{\unitlength}{\svgwidth}%
  \fi%
  \global\let\svgwidth\undefined%
  \global\let\svgscale\undefined%
  \makeatother%
  \begin{picture}(1,1.05912572)%
    \lineheight{1}%
    \setlength\tabcolsep{0pt}%
    \put(0,0){\includegraphics[width=\unitlength,page=1]{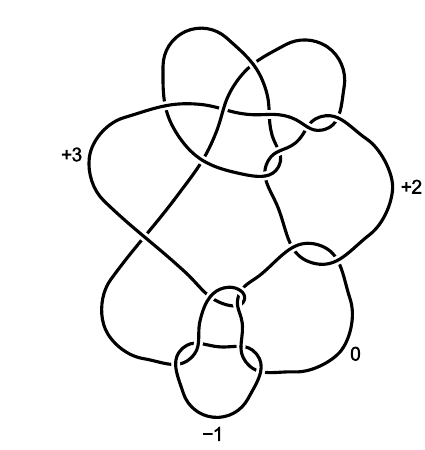}}%
    \put(0.54384989,0.98910599){\color[rgb]{0,0,0}\makebox(0,0)[lt]{\lineheight{1.25}\smash{\begin{tabular}[t]{l}$K_4$\end{tabular}}}}%
    \put(0,0){\includegraphics[width=\unitlength,page=2]{4-6.pdf}}%
  \end{picture}%
\endgroup%
  
 \def\svgwidth{0.27\textwidth}
\begingroup%
  \makeatletter%
  \providecommand\color[2][]{%
    \errmessage{(Inkscape) Color is used for the text in Inkscape, but the package 'color.sty' is not loaded}%
    \renewcommand\color[2][]{}%
  }%
  \providecommand\transparent[1]{%
    \errmessage{(Inkscape) Transparency is used (non-zero) for the text in Inkscape, but the package 'transparent.sty' is not loaded}%
    \renewcommand\transparent[1]{}%
  }%
  \providecommand\rotatebox[2]{#2}%
  \newcommand*\fsize{\dimexpr\f@size pt\relax}%
  \newcommand*\lineheight[1]{\fontsize{\fsize}{#1\fsize}\selectfont}%
  \ifx\svgwidth\undefined%
    \setlength{\unitlength}{264.10350037bp}%
    \ifx\svgscale\undefined%
      \relax%
    \else%
      \setlength{\unitlength}{\unitlength * \real{\svgscale}}%
    \fi%
  \else%
    \setlength{\unitlength}{\svgwidth}%
  \fi%
  \global\let\svgwidth\undefined%
  \global\let\svgscale\undefined%
  \makeatother%
  \begin{picture}(1,0.81786118)%
    \lineheight{1}%
    \setlength\tabcolsep{0pt}%
    \put(0,0){\includegraphics[width=\unitlength,page=1]{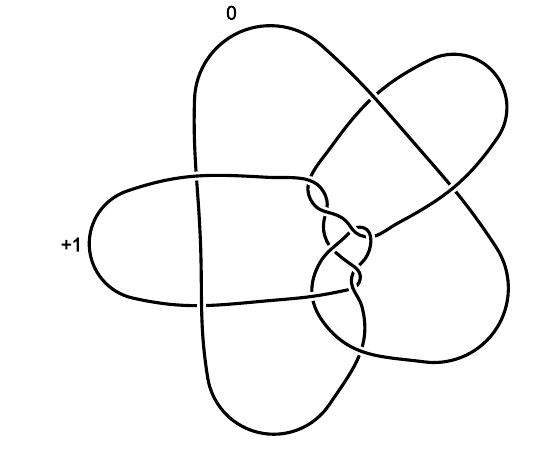}}%
    \put(0.90993578,0.69489933){\color[rgb]{0,0,0}\makebox(0,0)[lt]{\lineheight{1.25}\smash{\begin{tabular}[t]{l}$K_4$\end{tabular}}}}%
    \put(0,0){\includegraphics[width=\unitlength,page=2]{4-7.pdf}}%
  \end{picture}%
\endgroup%
      
\end{figure}  
\begin{figure}[t]
 \centering
  \def\svgwidth{0.35\textwidth}
\begingroup%
  \makeatletter%
  \providecommand\color[2][]{%
    \errmessage{(Inkscape) Color is used for the text in Inkscape, but the package 'color.sty' is not loaded}%
    \renewcommand\color[2][]{}%
  }%
  \providecommand\transparent[1]{%
    \errmessage{(Inkscape) Transparency is used (non-zero) for the text in Inkscape, but the package 'transparent.sty' is not loaded}%
    \renewcommand\transparent[1]{}%
  }%
  \providecommand\rotatebox[2]{#2}%
  \newcommand*\fsize{\dimexpr\f@size pt\relax}%
  \newcommand*\lineheight[1]{\fontsize{\fsize}{#1\fsize}\selectfont}%
  \ifx\svgwidth\undefined%
    \setlength{\unitlength}{336.56190491bp}%
    \ifx\svgscale\undefined%
      \relax%
    \else%
      \setlength{\unitlength}{\unitlength * \real{\svgscale}}%
    \fi%
  \else%
    \setlength{\unitlength}{\svgwidth}%
  \fi%
  \global\let\svgwidth\undefined%
  \global\let\svgscale\undefined%
  \makeatother%
  \begin{picture}(1,0.64178386)%
    \lineheight{1}%
    \setlength\tabcolsep{0pt}%
    \put(0,0){\includegraphics[width=\unitlength,page=1]{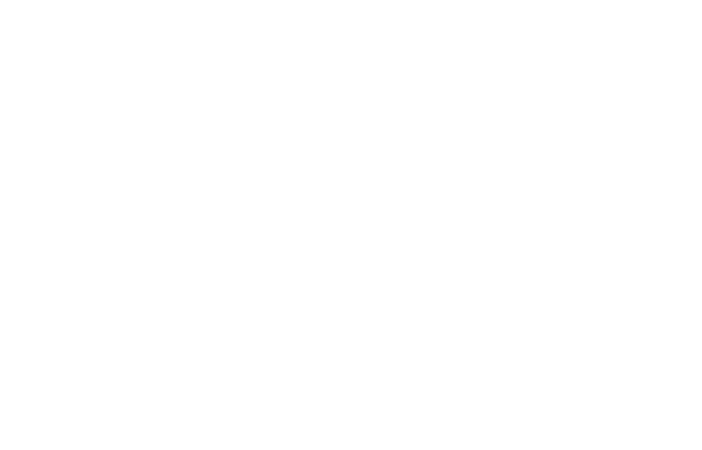}}%
    \put(0.54381404,0.55635853){\color[rgb]{0,0,0}\makebox(0,0)[lt]{\lineheight{1.25}\smash{\begin{tabular}[t]{l}$K_4\cong\gamma_+'\cong\gamma_+$\end{tabular}}}}%
    \put(0,0){\includegraphics[width=\unitlength,page=2]{4a3.pdf}}%
  \end{picture}%
\endgroup%

  \def\svgwidth{0.25\textwidth}
\begingroup%
  \makeatletter%
  \providecommand\color[2][]{%
    \errmessage{(Inkscape) Color is used for the text in Inkscape, but the package 'color.sty' is not loaded}%
    \renewcommand\color[2][]{}%
  }%
  \providecommand\transparent[1]{%
    \errmessage{(Inkscape) Transparency is used (non-zero) for the text in Inkscape, but the package 'transparent.sty' is not loaded}%
    \renewcommand\transparent[1]{}%
  }%
  \providecommand\rotatebox[2]{#2}%
  \newcommand*\fsize{\dimexpr\f@size pt\relax}%
  \newcommand*\lineheight[1]{\fontsize{\fsize}{#1\fsize}\selectfont}%
  \ifx\svgwidth\undefined%
    \setlength{\unitlength}{288.87691498bp}%
    \ifx\svgscale\undefined%
      \relax%
    \else%
      \setlength{\unitlength}{\unitlength * \real{\svgscale}}%
    \fi%
  \else%
    \setlength{\unitlength}{\svgwidth}%
  \fi%
  \global\let\svgwidth\undefined%
  \global\let\svgscale\undefined%
  \makeatother%
  \begin{picture}(1,0.7477233)%
    \lineheight{1}%
    \setlength\tabcolsep{0pt}%
    \put(0,0){\includegraphics[width=\unitlength,page=1]{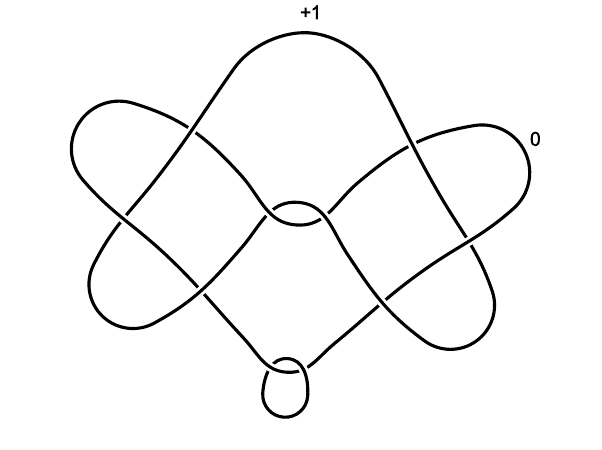}}%
    \put(0.53209235,0.09157255){\color[rgb]{0,0,0}\makebox(0,0)[lt]{\lineheight{1.25}\smash{\begin{tabular}[t]{l}$\gamma_+$\end{tabular}}}}%
    \put(0,0){\includegraphics[width=\unitlength,page=2]{4a2.pdf}}%
  \end{picture}%
\endgroup%
 \hspace{2cm}
  \def\svgwidth{0.34\textwidth}
\begingroup%
  \makeatletter%
  \providecommand\color[2][]{%
    \errmessage{(Inkscape) Color is used for the text in Inkscape, but the package 'color.sty' is not loaded}%
    \renewcommand\color[2][]{}%
  }%
  \providecommand\transparent[1]{%
    \errmessage{(Inkscape) Transparency is used (non-zero) for the text in Inkscape, but the package 'transparent.sty' is not loaded}%
    \renewcommand\transparent[1]{}%
  }%
  \providecommand\rotatebox[2]{#2}%
  \newcommand*\fsize{\dimexpr\f@size pt\relax}%
  \newcommand*\lineheight[1]{\fontsize{\fsize}{#1\fsize}\selectfont}%
  \ifx\svgwidth\undefined%
    \setlength{\unitlength}{1101.37948104bp}%
    \ifx\svgscale\undefined%
      \relax%
    \else%
      \setlength{\unitlength}{\unitlength * \real{\svgscale}}%
    \fi%
  \else%
    \setlength{\unitlength}{\svgwidth}%
  \fi%
  \global\let\svgwidth\undefined%
  \global\let\svgscale\undefined%
  \makeatother%
  \begin{picture}(1,0.66828161)%
    \lineheight{1}%
    \setlength\tabcolsep{0pt}%
    \put(0,0){\includegraphics[width=\unitlength,page=1]{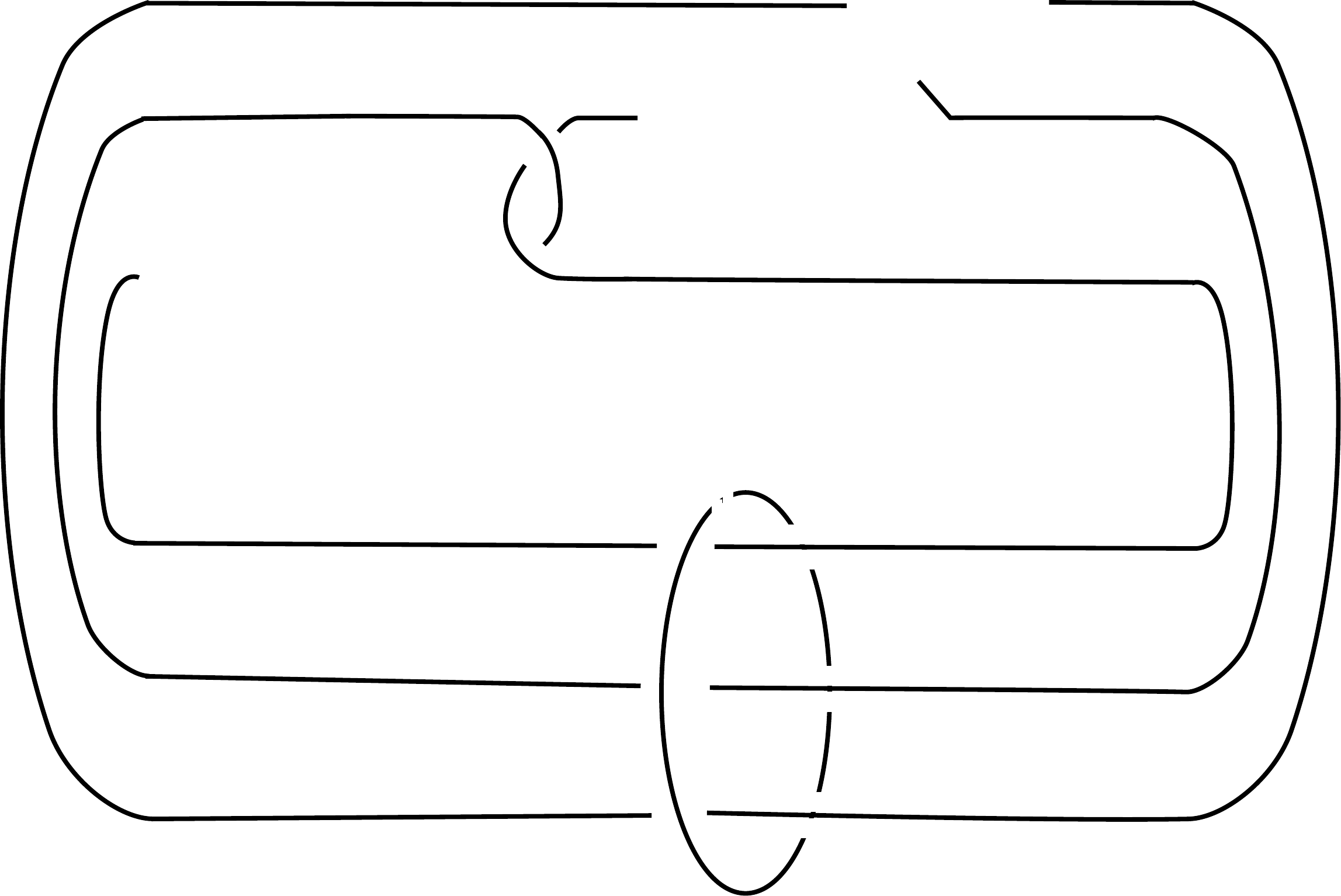}}%
    \put(0.16668633,0.27732651){\color[rgb]{0,0,0}\makebox(0,0)[lt]{\lineheight{1.25}\smash{\begin{tabular}[t]{l}$1$\end{tabular}}}}%
    \put(0.45043592,0.20316221){\color[rgb]{0,0,0}\makebox(0,0)[lt]{\lineheight{1.25}\smash{\begin{tabular}[t]{l}$0$\end{tabular}}}}%
    \put(0.57740686,0.35026791){\color[rgb]{0,0,0}\makebox(0,0)[lt]{\lineheight{1.25}\smash{\begin{tabular}[t]{l}$\gamma_+$\end{tabular}}}}%
    \put(0,0){\includegraphics[width=\unitlength,page=2]{4a1.pdf}}%
  \end{picture}%
\endgroup%

 \caption{\smaller[1]{The self-diffeomorphism $F$ of $Y^-(4)$ mapping $K_4$ to $\gamma_-$. There are three diffeomorphisms $f_D,f_E$ and $\Phi_{2,2}$ involved: we have that $f_D(K_4)=\gamma_+',$ $f_E(\gamma_+)=\gamma_+'$ while $\Phi_{2,2}(\gamma_-)=\gamma_+$ is the identification $Y^-(2,2)\cong-Y^+(0,-1)$ established in Property 2) of Theorem \ref{teo:AK}. The diffeomorphism we want is $F=\Phi_{2,2}^{-1}\circ f_E^{-1}\circ f_D$.}}
\label{Diffeo2}
\end{figure} 

 We continue with the case of $m=4$, corresponding to $\Sigma(3,4,5)$:
 \[\tb_{\xist}(\bigcirc)-L^T\Lambda^{-1}L=-1- 
 \left(\begin{matrix}
  -2 & 0 & 1 & 0
 \end{matrix}\right)\left(\begin{matrix}
   -4 & -5 & -6 & -3\\
    -5 & -7 & -8 & -4 \\
    -6 & -8 & -10 & -5 \\
    -3 & -4 & -5 & -3
 \end{matrix}\right)\left(\begin{matrix}
  -2 \\ 0 \\ 1 \\ 0 
 \end{matrix}\right)
 =-1+2=1\] and \[L^T\Lambda^{-1}\mathbf V=\left(\begin{matrix}
  -2 & 0 & 1 & 0
 \end{matrix}\right)\left(\begin{matrix}
   -4 & -5 & -6 & -3\\
    -5 & -7 & -8 & -4 \\
    -6 & -8 & -10 & -5 \\
    -3 & -4 & -5 & -3
 \end{matrix}\right)\left(\begin{matrix}
  -1 \\ 0 \\ 0 \\ 0 
 \end{matrix}\right)=-2\:;\] hence, in the same way as before we obtain \[\tau_\xi(K_4)=0\hspace{0.5cm}\text{ and }\hspace{0.5cm}\tau_{\overline\xi}(K_4)=2\:.\]
 The fact that $\mathcal T(m)=\{\tau_\xi(K_m),\tau_{\overline\xi}(K_m)\}$ follows from the discussion at the beginning of the section.
\end{proof}

\section{Symplectic structures and Heegaard Floer cobordism maps}
\label{section:three}
We are going to show that $W^-(m)$ does not carry a symplectic structure for $m=2,3,4$, and then extend this result to every $Z\#W^-(m)$ with $Z$ closed 4-manifold. 
We obtain our obstruction by computing the cobordism map induced by $W^-(m)$ in Heegaard Floer homology, and studying its behavior. 

We recall that the duality identification $\widehat{HF}(-Y,\s)\simeq\widehat{HF}(Y,\s)^\bullet$ holds for every 3-manifold $(Y,\s)$. Using the same terminology as in \cite[Subsection 2.4]{CM-negative} and \cite[Section 1]{OSz-fullpath}, we write $\rho_*:\widehat{HF}(Y,\s)\rightarrow\Ker U\subset HF^+(Y,\s)$ for the map induced by the inclusion, and $\widehat{HF}^{\text{ev}}(Y,\s)\subset\widehat{HF}(Y,\s)$ for the subspace of the homology classes with Maslov grading $d$ such that $d-d(Y,\s)\equiv0\text{ mod }2$, where $d(Y,\s)$ is the correction term. Thus, whenever $Y$ is presented by an almost-rational plumbing tree the following isomorphisms hold: 
\begin{equation}
 \widehat{HF}^\text{ev}_*(Y,\s)\simeq\widehat{HF}^\text{ev}_{-*}(-Y,\s)\simeq\Ker U\cap HF^+_{-*}(-Y,\s)\:,   
 \label{eq:iso}
\end{equation} where the right-most relation is given by $\rho_*$. In addition, the subgroup $\Ker U\cap HF^+(-Y,\s)$ contains the (unique) homology class $\Theta^+_\s\in U^n\cdot HF^+(-Y,\s)$ for every $n\geq1$; according to Equation \eqref{eq:iso}, we can identify $\Theta^+_s$ with a distinguished non-zero class in $\widehat{HF}^\text{ev}(-Y,\s)$. 

The assumptions for Equation \eqref{eq:iso} are satisfied by $Y^-(m)$ for $m=2,3,4$, as these manifolds are canonically oriented Brieskorn spheres, see Figure \ref{Graph}. Using the full path algorithm of Ozsv\'ath and Szab\'o \cite{OSz-fullpath}, we can then compute their Heegaard Floer groups. We obtain \[\widehat{HF}_0(Y^-(m))\simeq\widehat{HF}_0(-Y^-(m))\simeq\widehat{HF}^{\text{ev}}(Y^-(m))\simeq\F^3_0\hspace{0.5cm}\text{ for }\hspace{0.5cm}m=2,3,4\:,\] where $\F$ is the field with two elements. We write the canonical basis, given by (the full path class of) characteristic vectors as described in \cite{OSz-fullpath,CM-negative}, of such an $\F$-vector space as $\mathcal B=\{[V_1],[V_0],[V_{-1}]\}$ in order to highlight the action of the conjugation involution $\mathcal J$; in fact, we have that $\mathcal J[V_i]=[V_{-i}]$ for each $i$.
\begin{prop}
 \label{prop:cobordism}
 The cobordism map $\widehat F_{W^-(m)}:\widehat{HF}(S^3)\rightarrow\widehat{HF}_0(Y^-(m))$ sends the generator to $[V_0]$ for $m=3,4$.   
\end{prop}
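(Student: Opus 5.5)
Write $\theta:=\widehat F_{W^-(m)}(1)\in\widehat{HF}_0(Y^-(m))\simeq\F^3_0$ and expand $\theta=a_1[V_1]+a_0[V_0]+a_{-1}[V_{-1}]$ with $a_i\in\F$. The plan is to pin down the coefficients using three ingredients: conjugation symmetry, non-triviality, and the $\tau$-values of the slice knot $\gamma_-$ computed in Proposition \ref{prop:tau}.

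\emph{Step 1: symmetry and non-vanishing.} Since $W^-(m)$ is an integral homology ball, it carries a unique $\Spin^c$-structure, which is therefore self-conjugate. By Ghiggini's result that $\mathcal J$ commutes with cobordism maps, $\mathcal J\theta=\theta$. As $\mathcal J[V_i]=[V_{-i}]$, this forces $a_1=a_{-1}$. Moreover, $F^-_{W^-(m)}(1)$ is non-torsion of degree $0$. Its image under the natural map $HF^-\to\widehat{HF}$ is non-zero: the bottom of the tower is not in $U\cdot HF^-$, by degree considerations and $d(Y^-(m))=0$. Hence $\theta\neq0$, and indeed $\theta$ maps onto the generator of the tower part in $\widehat{HF}^{\text{ev}}$. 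This leaves only two candidates: $\theta=[V_0]$ or $\theta=[V_1]+[V_{-1}]$ (or possibly $[V_1]+[V_0]+[V_{-1}]$). To decide which candidates are allowed, I would use the full path description: exactly which combinations map to the tower generator is determined by $\rho_*$ and the pairing with $\Theta^+$.

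\emph{Step 2: sliceness bound.} By Theorem \ref{teo:AK}, $\gamma_-$ bounds a smooth disk $D$ in $W^-(2,m-2)$. Removing a neighbourhood of a point gives a cobordism from $(S^3,\text{unknot})$ to $(Y^-(m),\gamma_-)$ with a genus-zero surface. Filtered naturality of the cobordism map (the argument of \cite{AC} bounding $\tau_\theta$ by genus and self-intersection) then gives $\tau_\theta(W^-(m),\gamma_-)\le 0$, and with the mirrored orientation we also get $\tau_\theta(W^-(m),\gamma_-)\ge 0$, so $\tau_\theta=0$. In other words, $\theta$ lies in the filtration level $0$ for $\gamma_-$, in both orientations.

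\emph{Step 3: compare with the $\tau_\xi$ values.} Under duality, the Milnor fillable class $\widehat c(\xi)\in\widehat{HF}(-Y)$ pairs with one of the basis elements, say $[V_1]$, and $\widehat c(\overline\xi)$ pairs with $[V_{-1}]$ (canonical vectors for $\pm c_1$). By Proposition \ref{prop:tau}, one of $\tau_\xi(\gamma_-)$, $\tau_{\overline\xi}(\gamma_-)$ is nonzero (namely $1$ or $2$), while the other is $0$. If $\theta$ had nonzero coefficient on $[V_{\pm1}]$, then since $a_1=a_{-1}$ both coefficients would be nonzero. I would then show that the filtration level at which $\theta$ appears is at least $\max\mathcal T(m)>0$ (with the sign conventions $\tau_\eta=-\tau_{\widehat c(\eta)}$ made to match). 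To do this I would decompose the filtered complex by the symmetric pieces and use the duality pairing between $\widehat{HF}(Y)$ and $\widehat{HF}(-Y)$, which exchanges the Alexander filtrations of $K$ and of its mirror. This would contradict $\tau_\theta=0$. Hence $a_{\pm1}=0$ and $\theta=[V_0]$.

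The main obstacle is Step 3: making the duality between $\tau_\theta$ (a class in $\widehat{HF}(Y)$) and $\tau_\xi$ (defined by the contact class in $\widehat{HF}(-Y)$) rigorous, with correct signs and with control over the contribution of the other basis vectors. I would first try to express each $[V_i]$'s filtration level via the dual basis and argue that $[V_0]$ has level $0$ while $[V_{\pm1}]$ realize the levels in $\mathcal T(m)$. Then a sum's level is the maximum of its summands' levels, provided the summands have distinct levels; this proviso is exactly why the asymmetry $\{0,1\}$, $\{0,2\}$ is needed.
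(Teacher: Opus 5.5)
Your Steps 1 and 2 agree with the paper. You get $\J\theta=\theta$ from the unique $\Spin^c$-structure, you show $\theta\neq0$, and you get the slice bound $\tau_\theta(W^-(2,m-2),\gamma_-)\leq g_4(W^-(2,m-2),\gamma_-)=0$, which the paper obtains from Hedden--Raoux's relative adjunction inequality. Only this upper bound is used; your reverse inequality $\tau_\theta\geq0$ is unjustified but harmless. You also state the right target for Step 3: $\tau_\theta\geq\max\mathcal T(m)$ whenever $a_{\pm1}\neq0$.

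The gap is in how you propose to reach that target. Your route needs the filtration levels of the individual basis vectors: that $[V_{\pm1}]$ sit exactly at the levels in $\mathcal T(m)$, and that $[V_0]$ sits at level $0$. Neither is available. Duality only says that $\tau_\xi(\gamma_-)$ is the \emph{minimum} of $\tau_\gamma(\gamma_-)$ over all $\gamma$ with $\langle\gamma,\widehat c(\xi)\rangle=1$, so $[V_1]$ could lie strictly above $\tau_\xi(\gamma_-)$. Nothing controls the level of $[V_0]$ a priori; that it is $\leq0$ is essentially the conclusion you want. Even granting lower bounds for the summands, the max rule does not transfer lower bounds to a sum, because cancellation can lower the level of $[V_1]+[V_{-1}]$. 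Your distinct-levels proviso would also leave the equal-levels case open.

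The missing idea is that no decomposition of $\theta$ is needed. The paper uses the characterization $\tau_\eta(K)=\min\{\tau_\gamma(K):\gamma\in\widehat{HF}_{d_3(\eta)}(Y,\s_\eta),\ \langle\gamma,\widehat c(\eta)\rangle=1\}$ from \cite[Lemma 2.2 and Theorem 1.7]{AC}. Its easy direction says that \emph{any} class pairing nontrivially with $\widehat c(\eta)$ has filtration level at least $\tau_\eta(K)$. The reason is that under $\widehat{CF}(-Y)\cong\widehat{CF}(Y)^*$, a cycle representing $\widehat c(\eta)$ at its minimal level vanishes on all generators of Alexander grading below $\tau_\eta(K)$.

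Now suppose $a_1=a_{-1}=1$. Then $\langle\theta,\widehat c(\xi)\rangle=\langle\theta,T_{[V_1]}\rangle=1$ and $\langle\theta,\widehat c(\overline\xi)\rangle=\langle\theta,T_{[V_{-1}]}\rangle=1$. Hence $\max\mathcal T(m)\leq\tau_\theta\leq0$, contradicting Proposition \ref{prop:tau}. This disposes of $[V_1]+[V_{-1}]$ and $[V_1]+[V_0]+[V_{-1}]$ at once. The paper also rules out the former separately via $\langle\theta,\Theta^+\rangle=1$, which you only gestured at.

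This also shows that your remark about the asymmetry of $\{0,1\}$ and $\{0,2\}$ misidentifies the mechanism. Only $\max\mathcal T(m)>0$ is used.
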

\begin{proof}
 Since $W^-(m)$ is an integral homology 4-ball, the image of the map $F^-_{W^-(m)}$ is a non-torsion homogeneous class in $HF^-_0(Y^-(m))$, see \cite{OSz-negative}. For this reason, because of the commutation of the cobordism maps with the projection $\psi_*:HF^-\rightarrow\widehat{HF}$, we have that $\theta=\widehat F_{W^-(m)}(1)$ is a combination of some of the elements of the basis $\mathcal B$; in addition, we see immediately that the number of such elements is odd. The reason is that if we take the dual map $\widehat F_{\overline{W^-(m)}}=\widehat F_{W^-(m)}^\bullet:\widehat{HF}_0(-Y^-(m))\rightarrow\widehat{HF}(S^3)$, induced by $W^-(m)$ taken upside-down, then $\widehat F_{\overline{W^-(m)}}(\Theta^+)=1$ and by \cite[Subsection 2.4]{CM-negative} we know that $\Theta^+=T_{[V_1]}+T_{[V_0]}+T_{[V_{-1}]}$, where $T_{[V_i]}$ is the functional that sends $[V_i]$ to one and the others to zero. Hence, if $\theta$ were the sum of two elements in $\mathcal B$ then $\widehat F_{\overline{W^-(m)}}(\Theta^+)=0$.

 Taking the involution $\mathcal J$ into account yields \[ \mathcal J\theta=\mathcal J\widehat F_{W^-(m)}(1)=\widehat F_{W^-(m)}(\mathcal J1)=\theta\:,\] because there is a unique $\Spin^c$-structure on $W^-(m)$. We are left with two options: either one has $\theta=[V_0]$ or $\theta=[V_1]+[V_0]+[V_{-1}]$; note that this is true also when $m=2$. Suppose that the second case is true; then the definition of $\tau_\eta(K)$ and the orientation-reversing symmetry property in \cite[Lemma 2.2]{AC} imply \[\tau_\eta(K)=-\tau_{\widehat c(\eta)}(K)=\min\big\{\tau_\gamma(K)\: : \: \gamma\in \widehat{HF}_{d_3(\eta)}(Y,\s_\eta)\text{ and }\langle\gamma,\widehat c(\eta)\rangle=1\big\}\] for every knot $K\subset Y$ and $\eta$ contact structure with non-vanishing invariant. Thus combining this fact, which is summarized in \cite[Theorem 1.7]{AC}, with Hedden and Raoux's relative adjunction inequality \cite[Theorem 1]{HR} yields \[\max\mathcal T(m)\leq\tau_\theta(W^-(2,m-2),\gamma_-)\leq g_4(W^-(2,m-2),\gamma_-)\:.\] This holds because we know for sure that \[\langle\theta,\widehat c(\xi)\rangle=\langle\theta,\mathcal J\widehat c(\xi)\rangle=\langle\theta,\widehat c(\overline\xi)\rangle=1\;,\] as because $\xi$ is the Milnor fillable structure on $Y^-(m)$ and we can take the characteristic vector $[V_1]$ in the way that $T_{[V_1]}=\widehat c(\xi)$ by \cite[Proposition 5.2]{CM-negative}.
 This leads to a contradiction: the knot $\gamma_-$ is smoothly slice in $W^-(2,m-2)$ because of Property 3) in Theorem \ref{teo:AK}, thus $g_4(W^-(2,m-2),\gamma_-)=0$; meanwhile, the set $\mathcal T(m)$ contains a positive integer for both the values of $m$ from Proposition \ref{prop:tau}, thus $\max\mathcal T(m)>0$. 
\end{proof}
 
\begin{lemma}
 \label{lemma:summand}
 The manifold $W^-(m)$ is not a symplectic filling for $m=2,3,4$.   
\end{lemma}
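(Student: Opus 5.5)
The plan is to argue by contradiction. Suppose $W^-(m)$ carries a symplectic form $\omega$ making it a (strong or weak) filling of some contact structure $\eta$ on $Y^-(m)$. Since $W^-(m)$ is an integral homology ball, $b_2^+=0$, and we can use the Plamenevskaya / Ozsv\'ath--Szab\'o relation between the contact invariant and the cobordism map. Removing a ball from $W^-(m)$ and turning it upside down gives a cobordism from $-Y^-(m)$ to $S^3$. For a weak filling $(W,\omega)$ of $(Y,\eta)$ with canonical $\Spin^c$-structure $\mathfrak k$, this map sends $\widehat c(\eta)\in\widehat{HF}(-Y)$ to a nonzero element. More precisely, the mixed-map/pairing version in \cite{OSz-contact} and Ghiggini's work gives $\langle \widehat F_{W,\mathfrak k}(1),\widehat c(\eta)\rangle=1$ whenever $W$ has $b_1=0$ and, for the hat version, no closed classes with positive square. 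Here $\mathfrak k$ is the unique $\Spin^c$-structure, so $\theta=\widehat F_{W^-(m)}(1)$ must pair to $1$ with $\widehat c(\eta)$.

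Next, $\eta$ is symplectically fillable, so by \cite[Proposition 1.7]{ACM} it is either $\xi$ or $\overline\xi$ when $m=3,4$. The same classification, that these are the only fillable structures, should also be checked or cited for $\Sigma(2,3,13)$. In every case $\widehat c(\eta)=T_{[V_{\pm1}]}$ by \cite[Proposition 5.2]{CM-negative}. By Proposition \ref{prop:cobordism}, for $m=3,4$ we have $\theta=[V_0]$, hence $\langle\theta,\widehat c(\eta)\rangle=T_{[V_{\pm1}]}([V_0])=0$, a contradiction. There is a further constraint on degrees: $\widehat c(\eta)$ lives in degree $-d_3(\eta)$, and $\theta$ is in degree $0$. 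The fillable structures indeed have $d_3$ matching, since the correction term vanishes, so this does not give an easier obstruction, and the pairing argument is needed.

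For $m=2$ the proof of Proposition \ref{prop:cobordism} already shows that $\theta\in\{[V_0],[V_1]+[V_0]+[V_{-1}]\}$. There are two routes for this case.
\begin{itemize}
\item \textbf{Route (a).} Compute $\mathcal T(2)$ with a Legendrian presentation of $\gamma_-$ in $\Sigma(2,3,13)$, as in Proposition \ref{prop:tau}, and show it contains a positive value. This rules out the second option exactly as before.
\item \textbf{Route (b).} Use the known structure of fillable contact structures on $\Sigma(2,3,13)$ and run the pairing argument directly. Any filling must give $\langle\theta,\widehat c(\eta)\rangle=1$ with $\widehat c(\eta)$ equal to $T_{[V_{\pm1}]}$. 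If $\theta=[V_0]$ this is impossible, so symplecticity would force $\theta=[V_1]+[V_0]+[V_{-1}]$. Alternatively, cite Mark--Tosun \cite{MT2} only for the Stein case; this is insufficient on its own.
\end{itemize}

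I expect the $m=2$ case to be the main obstacle: the paper itself only gives restrictions there. I would first try Route (a), computing $\tau_\xi(\gamma_-)$ for $\Sigma(2,3,13)$ via Proposition \ref{teo:AC} after finding a suitable diffeomorphism.

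If that fails, I would use the $\tau_\theta$ adjunction argument in the opposite direction. If $W^-(2)$ were a filling of $\eta$, then Hedden--Raoux applied to the slice disk of $\gamma_-$ bounds $\tau_\eta(\gamma_-)$. Moreover, in a symplectic filling the relative adjunction could be compared with the Lagrangian/symplectic computation, yielding $\tau_\eta(\gamma_-)\le 0$ for both $\eta$ and $\overline\eta$. This would contradict the value forced by a Legendrian computation in the Milnor filling.
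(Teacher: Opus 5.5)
Your argument for $m=3,4$ is the paper's. You pair $\widehat F_{W^-(m)}(1)=[V_0]$ against $\widehat c(\eta)=T_{[V_{\pm1}]}$, and you make explicit the appeal to \cite[Proposition 1.7]{ACM} that the paper uses implicitly. The gap is $m=2$, where none of your routes reaches a contradiction.

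\textbf{Route (a) and your fallback.} These are the same argument. Both need $\max\mathcal T(2)>0$, i.e.\ a Legendrian computation of $\tau_\xi(\gamma_-)$ and $\tau_{\overline\xi}(\gamma_-)$ in $\Sigma(2,3,13)$ with a positive outcome. You never carry it out, and it is exactly what the paper does not have. Proposition \ref{prop:cobordism} only narrows $\theta$ to $\{[V_0],[V_1]+[V_0]+[V_{-1}]\}$ when $m=2$. The remark after the proof of the main theorem says the $m=2$ case rests on properties of the Mazur manifold $W^-(2)$, not of the slice knot $\gamma_-$. Nothing guarantees positivity. In the computed cases $\tau_\xi=0$ and only $\tau_{\overline\xi}$ is positive, taking the values $1$ and $2$ for $m=3,4$. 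So it is quite possible that $\mathcal T(2)$ has no positive entry, and then the slice disk gives no contradiction.

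\textbf{Route (b).} This is not an argument. Showing that a symplectic structure would force $\theta=[V_1]+[V_0]+[V_{-1}]$ is no contradiction, because that class pairs to $1$ with both $\widehat c(\xi)$ and $\widehat c(\overline\xi)$.

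\textbf{The missing idea.} It is gauge-theoretic, and you dismissed it too quickly: the Mark--Tosun argument is not specific to Stein structures. \cite[Lemma 3.1]{MT2} shows that capping a \emph{symplectic} filling of $\Sigma(2,3,13)$ with the plumbing $P_2$ of $-\Sigma(2,3,13)$ gives a closed symplectic $4$-manifold with non-vanishing Seiberg--Witten invariant in the relevant chamber. On the other hand, $P_2\cup W^-(2)\cong E(1)$, whose invariant $SW^0$ in that chamber vanishes; the glued manifold is well defined by \cite[Theorem 1.1]{Akbulut}. Hence the proof of \cite[Theorem 1.8]{MT2} already rules out symplectic, not just Stein, structures on $W^-(2)$. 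This is how the paper handles $m=2$, and substituting it for your $m=2$ discussion completes the proof.
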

\begin{proof} 
 The behavior of $\widehat F_{W^-(m)}$ is sufficient to obstruct the existence of a symplectic structure on the Mazur manifold $W^-(m)$ when $m=3,4$; in fact, the naturality of the contact invariant under cobordism maps induced by a strong symplectic filling \cite{OSz-contact,G-fillability} implies $\widehat F_{\overline{W^-(m)}}(\widehat c(\xi))=\widehat F^\bullet_{W^-(m)}(T_{[V_1]})=1$, but this contradicts the fact that $\widehat F_{W^-(m)}(1)=[V_0]$ which we established in Proposition \ref{prop:cobordism}. 

 For $Y^-(2)\cong\Sigma(2,3,13)$ then the claim follows from the proof of \cite[Theorem 1.8]{MT2}. In fact, Mark and Tosun show that the closed 4-manifold obtained by gluing the plumbing $P_2$, represented by the standard graph of $-Y^-(2)$ (this is shown in \cite[Figure 3]{MT2}), to $W^-(2)$ is diffeomorphic to the elliptic surface $E(1)$; moreover, they mention that $E(1)$ has vanishing Seiberg-Witten invariant $SW^0$. Note that this manifold is well-defined by \cite[Theorem 1.1]{Akbulut}, such an argument was not needed in \cite[Theorem 1.8]{MT2} 
 
 Conversely, if $W^-(2)$ carried a symplectic structure then one could glue $W^-(2)$ to $P_2$ to obtain a 4-manifold with non-zero $SW^0$; this is the content of \cite[Lemma 3.1]{MT2}.
\end{proof}

We can now prove our main result.

\begin{proof}[Proof of Theorem \ref{teo:main}]
 After proving Lemma \ref{lemma:summand}, we just need to take care of the connected sum. For $m=3,4$ we just observe that the cobordism map $\widehat F_{Z,\s}$ induced by $(Z,\s)$ is either the identity or zero, while $\widehat F_{Z\#W^-(m),\:\s}=\widehat F_{W^-(m)}\circ\widehat F_{Z,\s}$ by the composition formula.

 For $m=2$, the cap construction in the proof of \cite[Lemma 3.1]{MT2}, together with \cite[Theorem 1.7]{MT2} and \cite[Theorem 1.1]{Akbulut}, gives a closed symplectic manifold $(M,\omega)$ with \[M=P_2\cup\bigl(Z\#W^-(2)\bigr) \cong Z\#\bigl(P_2\cup W^-(2)\bigr) \cong Z\#E(1)\:.\]
Let $F\subset P_2$ be the symplectic torus used in that construction, and put $f=\operatorname{PD}_{E(1)}[F]$; thus $f\neq0,\:f\cdot f=0$ and $\int_F\omega>0$. Write $\s_\omega$ for the $\Spin^c$-structure induced by an $\omega$-compatible almost-complex structure, so that $c_1(\s_\omega)=-K_\omega$; the canonical class of the cap is $f\lvert_{P_2}$. Since the restriction $H^2(E(1);\mathbb Z)\rightarrow H^2(P_2;\mathbb Z)$ is an isomorphism, the connected sum decomposition gives 
$$ H^2(M;\mathbb Z) \cong H^2(E(1);\mathbb Z)\oplus H^2(Z;\mathbb Z)\hspace{0.5cm}\text{ and }\hspace{0.5cm}c_1(\mathfrak s_\omega)=(-f,c_Z) $$
for some $c_Z\in H^2(Z;\mathbb Z)$.

If $b_2^+(Z)>0$ then the connected sum vanishing theorem, see \cite[Theorem 1.1]{Salamon2}, contradicts \cite[Theorem 2.2]{LiLiu}; hence we may assume that $b_2^+(Z)=0$, so that $b_2^+(M)=1$. Let $\mathcal P_\omega$ be the component of the positive cone containing $[\omega]$; we use the chamber convention of \cite{MT2}: for a metric $g$ whose normalized period point $h_g$ lies in $\mathcal P_\omega$, the zero-perturbation pair $(g,0)$ lies in the negative chamber whenever $c_1(\s_\omega)\cdot h_g<0$. In this convention one has $\operatorname{SW}^-(M,\s_\omega)=\pm1$.

Choose a metric $g_E$ on $E(1)$ with positive scalar curvature and a metric $g_Z$ on $Z$, and let $g_\epsilon$ be standard connected sum metrics whose neck stretches as $\epsilon\to0$. Let $h_\varepsilon\in\mathcal P_\omega$ be their period points, normalized by $h_\epsilon\cdot h_\epsilon=1$. Since $b_2^+(Z)=0$, these converge to $(h_E,0)$, where $h_E$ is the normalized period point of $g_E$. The sign of $h_E$ is determined by $f\cdot h_E>0$: indeed, $f\cdot f=0$ and $f\cdot[\omega]>0$, so the light-cone lemma \cite[Lemma 3.1]{LiLiu} places $f$ in the closure of $\mathcal P_\omega$. Consequently, we have that $$ c_1(\s_\omega)\cdot h_\epsilon \longrightarrow -f\cdot h_E<0$$ thus $(g_\varepsilon,0)$ lies in the negative chamber for all sufficiently small $\varepsilon$.
On $(E(1),g_E)$, the unperturbed Seiberg-Witten moduli space for the $\Spin^c$-structure $\s_\omega\lvert_{E(1)}$ is empty. Positive scalar curvature excludes irreducible solutions, while a reducible solution would require $(-f)^+=0$, contradicting $-f\cdot h_E<0$, where $(-f)^+$ is the self-dual component of $-f$ with respect to $g_E$. 

The standard arguments in \cite[Section 4]{Salamon2} and \cite[Theorem 8.6]{Salamon} now imply that the unperturbed moduli space on $(M,g_\epsilon)$ is empty for all sufficiently small $\epsilon$. Indeed, a sequence of solutions with $\epsilon\to0$ would limit to a finite-energy solution on the punctured $E(1)$-summand, which extends across the puncture to a solution on $(E(1),g_E)$ by \cite[Theorem 1.2]{Salamon2}. Therefore, we conclude that $\operatorname{SW}^-(M,\mathfrak s_\omega)=0$ contradicting \cite[Theorem 2.2]{LiLiu} and completing the proof.
\end{proof}

We note that the arguments given in the proofs above are not generalizable to other Brieskorn spheres, or even to other rational homology ball symplectic fillings of $Y^-(m)$; they strictly rely on the properties of either the slice knot $\gamma_-$ or the Mazur manifold $W^-(2)$.

We observe that the cobordism map $\widehat F_{-W^-(m)}:\widehat{HF}(S^3)\rightarrow\widehat{HF}_0(-Y^-(m))$, which we obtain by reversing the orientation of the 4-manifold, behaves in a remarkably different way.

\begin{prop}
 \label{prop:opposite}
 Suppose that $Y$ is presented by an almost-rational plumbing tree, and is the boundary of a rational homology ball $W$. Then the cobordism map $\widehat F_{-W,\mathfrak u}:\widehat{HF}(S^3)\rightarrow\widehat{HF}_0(-Y,\s)$ where $\s=\mathfrak u\lvert_{-Y}$ sends the generator to $\Theta^+_\s$.   
\end{prop}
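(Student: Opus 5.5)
The idea is to use $HF^+$ and the fact that $\Theta^+_\s$ is characterized as the unique nonzero element of $\Ker U\cap U^n\cdot HF^+(-Y,\s)$ for large $n$. Since $W$ is a rational homology ball, $-W$ is in particular negative-definite with $b_1=0$. Hence the map $F^+_{-W,\mathfrak u}:HF^+(S^3)\rightarrow HF^+(-Y,\s)$ restricts to an isomorphism between the towers $\mathcal T^+$ in high degrees. This follows from the non-torsion statement of \cite{OSz-negative} for $F^-$, combined with the long exact sequence relating $HF^-$, $HF^\infty$ and $HF^+$: on $HF^\infty$ the map is an isomorphism because $b_2^+=0$. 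The degree shift vanishes since $b_1=b_2=0$, so $d(-Y,\s)=0$.

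First I will show that the bottom element $\Theta^+\in HF^+_0(S^3)$ maps to $\Theta^+_\s$. Take a sufficiently large power, $\Theta^+=U^n x$ with $x$ in the tower of $S^3$. Then $F^+(\Theta^+)=U^nF^+(x)$, and this is nonzero because $F^\infty$ is an isomorphism and, in the top tower degrees, $HF^+$ agrees with the image of $HF^\infty$. It lies in $U^n\cdot HF^+(-Y,\s)$ and is annihilated by $U$, since $U\Theta^+=0$ and $F^+$ is $U$-equivariant. Uniqueness of such an element gives $F^+(\Theta^+)=\Theta^+_\s$.

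Next I pass to $\widehat{HF}$. The generator $1\in\widehat{HF}(S^3)$ maps to $\Theta^+$ under $\rho_*:\widehat{HF}\rightarrow\Ker U\subset HF^+$. The cobordism maps commute with $\rho_*$, because they are defined by the same holomorphic triangle counts on the short exact sequence $0\to\widehat{CF}\to CF^+\xrightarrow{U}CF^+\to0$. Therefore
\[\rho_*\widehat F_{-W,\mathfrak u}(1)=F^+_{-W,\mathfrak u}(\Theta^+)=\Theta^+_\s.\]
The image $\widehat F_{-W,\mathfrak u}(1)$ is homogeneous of degree $0=d(-Y,\s)$, so it lies in $\widehat{HF}^{\mathrm{ev}}$. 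Here Equation \eqref{eq:iso} applies, since $Y$ is presented by an almost-rational plumbing tree: $\rho_*$ is an isomorphism on even degrees, so $\widehat F_{-W,\mathfrak u}(1)$ is exactly the class identified with $\Theta^+_\s$.

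The main obstacle is the first step, namely justifying that $U^nF^+(x)\neq0$. This is the tower-isomorphism claim. I would derive it from $F^\infty$ being an isomorphism for negative-definite $-W$, together with the surjectivity of $HF^\infty\rightarrow HF^+$ in degrees above the reduced part; for almost-rational graphs the latter holds because $HF^+(-Y)$ is supported in even degrees with $HF_{\mathrm{red}}$ bounded. The compatibility of the cobordism maps with $\rho_*$ is standard, and I would cite it from \cite{OSz-negative}.
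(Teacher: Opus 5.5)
Your proof is correct, but it reaches the key identification by a different route than the paper. The paper works on the minus side. It dualizes the full-path/N\'emethi description to see that in $HF^-(-Y,\s)$ the tower generator $\alpha$ sits in degree $0$, while all torsion sits in odd degrees. Hence the non-torsion degree-$0$ class $F^-_{-W,\mathfrak u}(1)$ must equal $\alpha$. The paper then projects to $\widehat{HF}$ and invokes ``commutativity of the cobordism maps'' together with Equation \eqref{eq:iso}.

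You work on the plus side instead. You prove $F^+_{-W,\mathfrak u}(\Theta^+)=\Theta^+_\s$ directly from $F^\infty$ being an isomorphism and $d(-Y,\s)=0$. You then use naturality of $\rho_*$ and the injectivity of $\rho_*$ on $\widehat{HF}^{\mathrm{ev}}(-Y,\s)$.

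What your route buys:
\begin{enumerate}
\item The equality $\rho_*\widehat F_{-W,\mathfrak u}(1)=\Theta^+_\s$ holds for every rational homology ball, with no plumbing hypothesis.
\item The almost-rational assumption enters only at the last step, to lift this equality uniquely to $\widehat{HF}_0(-Y,\s)$. This lift is exactly what is unavailable for $W$ itself in Proposition \ref{prop:cobordism}: there $\rho_*$ has a $2$-dimensional kernel on $\widehat{HF}_0(Y^-(m))\simeq\F^3$.
\item It spells out the step the paper compresses into ``commutativity''.
\end{enumerate}
The paper's route, in turn, determines the minus-level map completely: $F^-_{-W,\mathfrak u}(1)=\alpha$, with no room for a torsion component.

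Two minor remarks. First, surjectivity of $HF^\infty\to HF^+$ in high degrees holds for any rational homology sphere, because $HF_{\mathrm{red}}$ is finite-dimensional, so you need not invoke the plumbing there. Second, $d(-Y,\s)=0$ comes from the $d$-invariant inequality applied to both $W$ and $-W$ (rational homology cobordism invariance), not from the vanishing of the degree shift alone.
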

\begin{proof}
 By \cite[Theorem 1.2]{OSz-fullpath} and \cite[Theorem 8.3]{Nemethi0} the subspace of $HF^+(Y,\s)$ supported in even degrees coincides with $\Imm\pi_*$ where $\pi_*:HF^\infty(Y,\s)\rightarrow HF^+(Y,\s)$, and by duality this implies that every homogeneous non-torsion element of $HF^-(-Y,\s)$ is in a canonical $\F[U]$-tower whose generator $\alpha$ has Maslov grading zero, while every other homogeneous class in $HF^-(-Y,\s)$ is torsion and with odd degree. 

 This means that $F^-_{-W,\mathfrak u}$ maps the generator to $\alpha$. The commutativity of the cobordism maps tells us that the projection of $\alpha$ into $\widehat{HF}_0(-Y,\s)$ is then sent to $\Theta^+_\s$, and we conclude by using the identification in Equation \eqref{eq:iso}. 
\end{proof}

Contrary to what happens in Proposition \ref{prop:cobordism}, the assumptions for this result are much broader. Note that it follows from \cite[Theorem 1.4]{ACM} that oppositely oriented small Brieskorn spheres do not admit any rational homology ball symplectic filling.

\section{Exotic 4-manifolds}
\label{section:four}
We can use Mazur manifolds which have Brieskorn spheres as boundary to produce examples of exotic pairs with definite-intersection form. 
We recall that the manifold $W^-(m)$, whose boundary $Y^-(m)$ is diffeomorphic to $\Sigma(2,3,13),$ $\Sigma(2,5,7)$ and $\Sigma(3,4,5)$ when $m=2,3$ and $4$ respectively, is contained in this family; moreover, many other Brieskorn spheres are as such, see \cite{Savk}. Nonetheless, the construction we apply to prove Theorem \ref{teo:exotic} works for more general 3-manifolds; in particular, we only need a $Y$ bounding a Mazur manifold $W$ and a negative-definite Stein domain $X$ with $\pi_1(X)=1$ and positive second Betti number.



Let $X_1:=X$, while $X_2:=\widehat X\#W$ where $\widehat X$ is any closed 4-manifold obtained by gluing $-W$ to $X$; finally, let $X_3:=\#m\C P^2\#W$ where $m=b_2(X)$. 

\begin{proof}[Proof of Theorem \ref{teo:exotic}]
 Since $X$ is negative-definite, the manifolds $X_1$ and $X_i$ for $i=2,3$ have the same intersection form by Donaldson's Theorem A. Moreover, we have that $\widehat X$ is simply connected, because $X$ and $-W$ are made only by 0-, 1- and 2-handles while $\pi_1(W)=1$; hence, the two manifolds are both simply connected. Freedman's classification \cite{Freedman}, together with its generalization due to Boyer \cite[Corollary 0.9]{Boyer}, then implies that $X_1$ is homeomorphic to $X_i$, as clearly $\partial X_1\cong\partial X_i=Y$ is an integral homology 3-sphere.



 We recall that from the proof of Proposition \ref{prop:cobordism} one has that $\widehat F_W(1)=\theta\in\widehat{HF}_0(Y)$ is self-conjugate, which means $\J\theta=\theta$; this holds whenever $W$ is an integral homology 4-ball. Since a closed simply connected 4-manifold whose intersection form is negative-definite can always be seen as a cobordism from $S^3$ to $S^3$, and its induced Heegaard Floer map is either zero or the identity \cite{OSz-negative}, we have that $\widehat F_{X_i,\mathfrak u}(1)\in\{0,\theta\}$ for $i=2,3$ and every $\mathfrak u\in\Spin^c(X_i)$. 
 
 In order to complete the proof, we just need to show that any Stein structure $J$ on $X_1$ is such that $\alpha:=\widehat F_{X_1,J}(1)$ is not self-conjugate. Since $Y$ bounds a Mazur manifold, the intersection form of $X_1$ is diagonalizable, and then $c_1(J)\neq0$ which implies that $J$ and $-J$ induce distinct $\Spin^c$-structures. We then know from Plamenevskaya \cite[Theorem 4]{Olga} that $\widehat F_{\overline X_1,J}(\widehat c(J\lvert_Y))=1$, where $\widehat F_{\overline X_1,J}:\widehat{HF}(-Y)\rightarrow\widehat{HF}(S^3)$ is the cobordism map obtained by taking $X_1$ upside-down and $\widehat c$ is the Ozsv\'ath-Szab\'o contact invariant, while $\widehat F_{\overline X_1,J}(\widehat c(-J\lvert_Y))=\widehat F_{\overline X_1,-J}(\widehat c(J\lvert_Y))=0$.
 


 By duality and since $\widehat F_{X_1,J}=(\widehat F_{\overline X_1,J})^\bullet$, we conclude that $\langle\alpha,\widehat c(J\lvert_Y)\rangle=1$ while \[0=\langle\alpha,\widehat c(-J\lvert_Y)\rangle=\langle\alpha,\J\widehat c(J\lvert_Y)\rangle=\langle\J\alpha,\widehat c(J\lvert_Y)\rangle\:,\] thus $\J\alpha\neq\alpha$.
\end{proof}


We conclude the paper by proving Corollary \ref{cor:exotic}; namely, that there exist two topologically isotopic smooth embeddings $f$ and $g$ of a Brieskorn sphere $Y$, in a simply connected 4-manifold $M$ with definite intersection form, such that no diffeomorphism of $M$ maps $f(Y)$ to $g(Y)$.

\begin{proof}[Proof of Corollary \ref{cor:exotic}]
 We take as $M$ any 4-manifold $-\widehat X$ as above, where $\partial X=Y$ is a Brieskorn sphere. We have that $W\subset D^4\subset M$; hence, we denote by $f(Y)$ the boundary of $W$ in $D^4$, while by $g(Y)$ the boundary of $-X$, that is the 3-manifold where we glue $W$ and $-X$ to get $M$. If a diffeomorphism of $M$ into itself would map $f(Y)$ to $g(Y)$ then their complements would also be diffeomorphic, and thus $X\cong\widehat X\#W$ which is impossible because of Theorem \ref{teo:exotic}.

 It remains to show that $f(Y)$ and $g(Y)$ are topologically isotopic. 
 Let $W_0$ be the copy of $W$ in a small $D^4$ and $W_1$ the other one, and fix an orientation-preserving identification $\phi:W_0\to W_1$. Write $C_i=\overline{M\setminus W_i}$, then $C_0$ is homeomorphic to $-\widehat X\#-W$ while $C_1=-X$, the inclusions $j_i:C_i\hookrightarrow M$ induce isometries on their second homology groups. 
 
 By \cite[Theorem 0.7 and Proposition 0.8(i)]{Boyer} there is a homeomorphism $F:C_0\to C_1$ extending $\phi|_{\partial W_0}$ and realizing the isometry $F_*=(j_1)_*^{-1}(j_0)_*$ from $H_2(C_0;\Z)$ to $H_2(C_1;\Z)$; the compatibility conditions are automatic because $Y$ is an integral homology sphere. Gluing $F$ to $\phi$ gives an orientation-preserving homeomorphism $H:M\to M$ with $H(W_0)=W_1$ and $H_*=\Id_{H_2(M;\Z)}$. By a result of Quinn \cite[Theorem 1.1]{Quinn}, see also \cite{GG} for a complete proof, such a homeomorphism is topologically isotopic to $\Id_M$.
\end{proof}


\begin{thebibliography}{9}
  \bibitem{Akbulut} S. Akbulut, \emph{Cork twists and automorphisms of $3$-manifolds},
    J. G\"okova Geom. Topol. GGT., \textbf{14} (2020), pp. 91--103.
 \bibitem{AD} S. Akbulut and S. Durusoy, \emph{An involution acting nontrivially on Heegaard-Floer homology},
  Geometry and topology of manifolds. Papers from the conference held at McMaster University, Hamilton, ON, Canada, May 14--18, 2004.
 \bibitem{Cork} S. Akbulut and \c C. Karakurt, \emph{Action of the Cork twist on Floer homology},
  Proceedings of the 18th G\"okova geometry-topology conference, G\"okova, Turkey, May 30--June 4, 2011.
 \bibitem{AK} S. Akbulut and R. Kirby, \emph{Mazur manifolds},
  Michigan Math. J., \textbf{26} (1979), no. 3, pp. 259--284. 
 \bibitem{AL} S. Akbulut and K. Larson, \emph{Brieskorn spheres bounding rational balls}, 
  Proc. Amer. Math. Soc., \textbf{146} (2018), no. 4, pp. 1817--1824.
  \bibitem{AC} A. Alfieri and A. Cavallo, \emph{Holomorphic curves in Stein domains and the tau-invariant},
   arXiv:2310.08657.
 \bibitem{ACM} A. Alfieri, A. Cavallo and I. Matkovi\v c, \emph{Brieskorn spheres and rational homology ball symplectic fillings},
   arXiv:2605.13812. 
 \bibitem{Boyer} S. Boyer, \emph{Simply-connected $4$-manifolds with a given boundary},
  Trans. Amer. Math. Soc., \textbf{298} (1986), no. 1, pp. 331--357. 
  \bibitem{Baptiste} B. Chantraine, \emph{Lagrangian concordance of Legendrian knots},
 Algebr. Geom. Topol., \textbf{10} (2010), no. 1, pp. 63--85. 
  \bibitem{CM-negative} A. Cavallo and I. Matkovi\v c, \emph{Fillable structures on negative-definite Seifert fibred spaces},
     arXiv:2604.28174.      
  \bibitem{DHM} I. Dai, M. Hedden and A. Mallick, \emph{Corks, involutions, and Heegaard Floer homology},
    J. Eur. Math. Soc., \textbf{25} (2023), no. 6, pp. 2319--2389.
   \bibitem{Fickle} H. C. Fickle, \emph{Knots, $\Z$-homology $3$-spheres and contractible $4$-manifolds}, 
   Houston J. Math., \textbf{10} (1984), no. 4, pp. 467--493.
  \bibitem{FS} R. Fintushel and R. Stern, \emph{An exotic free involution on $S^4$}, 
   Ann. of Math. (2),\textbf{113} (1981), no. 2, pp. 357--365. 
  \bibitem{Freedman} M. Freedman, \emph{The topology of four-dimensional manifolds},
   J. Differential Geometry, \textbf{17} (1982), no. 3, pp. 357--453.
  \bibitem{GG} D. Gabai, D. Gay, D. Hartman, V. Krushkal and M. Powell, \emph{Pseudo-isotopies of simply connected $4$-manifolds}, 
   Forum Math. Pi, \textbf{14} (2026), e9.
 \bibitem{G-fillability} P. Ghiggini, \emph{Ozsv\'ath-Szab\'o invariants and fillability of contact structures},
   Math. Z., \textbf{253} (2006), no. 1, pp. 159--175.
  \bibitem{Hedden} M. Hedden, \emph{An Ozsv\'ath-Szab\'o Floer homology invariant of knots in a contact manifold}, Adv. Math.,  \textbf{219} (2008) no. 1 pp. 89--117.
  \bibitem{HR} M. Hedden and K. Raoux, \emph{Knot Floer homology and relative adjunction inequalities},
 Selecta Math. (N.S.), \textbf{29} (2023), no. 1, 48 pp.
  \bibitem{LiLiu} T. J. Li and A. K. Liu, \emph{Uniqueness of symplectic canonical class, surface cone and symplectic cone of $4$-manifolds with $b^+=1$},
   J. Differential Geom., \textbf{58} (2001), no. 2, pp. 331--370.  
 \bibitem{MT2} T. Mark and B. Tosun, \emph{Obstructing pseudoconvex embeddings and contractible Stein fillings for Brieskorn spheres},
   Adv. Math., \textbf{335} (2018), pp. 878--895.
  \bibitem{Nemethi0} A. N\'emethi, \emph{On the Ozsv\'ath-Szab\'o invariant of negative definite plumbed $3$-manifolds}, 
  Geom. Topol., \textbf 9 (2005), pp. 991--1042.
 \bibitem{OSz-negative} P. Ozsv\'ath and Z. Szab\'o, \emph{Absolutely graded Floer homologies and intersection forms for four-manifolds with boundary},
  Adv. Math., \textbf{173} (2003), pp. 179--261.  
  \bibitem{OSz-fullpath} P. Ozsv\'ath and Z. Szab\'o, \emph{On the Floer homology of plumbed three-manifolds},
  Geom. Topol., \textbf 7 (2003), no. 1, pp. 185--224.
  \bibitem{OSz-contact} P. Ozsv\'ath and Z. Szab\'o, \emph{Heegaard Floer homology and contact structures},
 Duke Math. J., \textbf{129} (2005), no. 1, pp. 39--61. 
  \bibitem{Olga} O. Plamenevskaya, \emph{Contact structures with distinct Heegaard Floer invariants}, 
   Math. Res. Let., \textbf{11} (2004), pp. 547--561.
  \bibitem{Quinn} F. Quinn, \emph{Isotopy of $4$-manifolds}, 
   J. Differential Geom., \textbf{24} (1986), no. 3, pp. 343--372. 
   \bibitem{Salamon2} D. Salamon, \emph{Removable singularities and a vanishing theorem for Seiberg-Witten invariants},
   Turkish J. Math., \textbf{20} (1996), pp. 61--73.
  \bibitem{Salamon} D. Salamon, \emph{Spin geometry and Seiberg-Witten invariants},
  preprint, 1999. 
 \bibitem{Savk} O. \c{S}avk, \emph{More Brieskorn spheres bounding rational balls},
  Topology Appl., \textbf{286} (2020), 107400, 11 pp. 
\end{thebibliography}
\end{document}